\documentclass[11pt]{amsart}

\usepackage{amsmath,amsthm}
\usepackage{amsfonts}
\usepackage{verbatim}
\usepackage{amssymb}
\usepackage{amscd}
\usepackage{bbm}
\usepackage{mathrsfs}
\usepackage[nobysame,non-sorted-cites]{amsrefs}
\usepackage[citecolor=red,colorlinks=true]{hyperref}
\usepackage{fouriernc}

\usepackage{tikz}
\usetikzlibrary{matrix,arrows,decorations.pathmorphing}

\usepackage{color}


\long\def\comment#1\endcomment{}

\def\R{{\mathbb R}}

\def\Z{{\mathbb Z}}
\def\N{{\mathbb N}}

\def\calt{{\mathcal{T}}}

\def\calp{{\mathcal{P}}}
\def\q{{\mathfrak{q}}}

\newcommand{\set}[1]{\left\{#1\right\}}
\newcommand{\setd}[2]{\left\{#1\ \colon\ #2\right\}}

\newtheorem{theorem}{Theorem}[section]

\newtheorem{corollary}[theorem]{Corollary} 
\newtheorem{lemma}[theorem]{Lemma} 
\newtheorem{proposition}[theorem]{Proposition} 
\newtheorem{definition}[theorem]{Definition} 
\newtheorem{question}[theorem]{Question}

\newtheorem{remark}[theorem]{Remark}
\newtheorem{remarks}[theorem]{Remarks} 

\newtheorem{examples}[theorem]{Examples}  
\newtheorem{conjecture}[theorem]{Conjecture}

\newtheorem{problem}[theorem]{Problem}

\newcommand{\supp}{\operatorname{supp}}

\newcommand{\RR}{\mathbb{R}}
\newcommand{\ZZ}{\mathbb{Z}}
\newcommand{\CC}{\mathbb{C}}
\newcommand{\NN}{\mathbb{N}}

\newcommand{\opi}{\overline{\pi}}

\newcommand{\iip}{(i)}
\newcommand{\dist}{\operatorname{dist}}

\renewcommand{\theenumi}{\roman{enumi}} 

\setcounter{tocdepth}{1} 

\title{Kazhdan projections, random walks and ergodic theorems}

\date{\today}


\author{Cornelia Dru\c{t}u}

\thanks{The research of both authors was supported by the EPSRC grant ``Geometric and analytic aspects of infinite groups". The research of the first author was also partially supported by the project ANR Blanc ANR-10-BLAN 0116, acronym GGAA, and by the Labex CEMPI (ANR-11-LABX-0007-01). The research of the second author was partially supported by Narodowe Centrum Nauki grant DEC-2013/10/EST1/00352.}

\address{{\normalfont{C.Dru\c{t}u:}} Mathematical Institute, University of Oxford, United Kingdom.}
\email{Cornelia.Drutu@maths.ox.ac.uk}

\author{Piotr W. Nowak}
\address{{\normalfont{P. W. Nowak:}} Institute of Mathematics of the Polish Academy of Sciences, Poland -- \and -- Institute of Mathematics, University of Warsaw, Poland}
\email{pnowak@impan.gov.pl}


\begin{document}

\newcounter{aux1}
\newcounter{aux2}
\newcounter{mainthm}
\newcounter{expthm}
\newcounter{ergthm}
\newcounter{warpthm}

\maketitle

\begin{abstract}
In this paper we investigate generalizations of Kazhdan's property $(T)$ to the setting of uniformly convex Banach spaces. We explain the interplay between the existence of spectral gaps and that of Kazhdan projections.
Our methods employ Markov operators associated to a random walk on the group, for which we provide new norm estimates and convergence results. 
They exhibit useful properties and flexibility, and allow
to view Kazhdan projections in Banach spaces as natural objects associated to random walks on groups. 

We give a number of applications of these results. In particular, we address several open questions. 
We give a direct comparison of properties $(TE)$ and $FE$ with Lafforgue's reinforced Banach property $(T)$; 
we obtain shrinking target theorems for orbits of Kazhdan groups; finally, answering a question of Willett and Yu we construct 
non-compact ghost projections for warped cones. In this last case we conjecture that such warped cones provide 
 counterexamples to the coarse Baum-Connes conjecture.
\end{abstract}

\section{Introduction}

One way to investigate properties of groups, especially with a view to their actions on Banach spaces, is through the group Banach algebras. These are natural analytic objects encoding many properties of the group. The existence of projections in such algebras is a particularly important and challenging problem. For instance, the (non)existence of idempotents other than 0 and 1 in the reduced group  $C^*$-algebra of a torsion-free group is a long-standing conjecture of Kadison and Kaplansky. When the group is amenable (and more generally, a-T-menable) the Kadison-Kaplansky conjecture is known to be true. Additionally, for amenable and torsion free groups the maximal group $C^*$-algebra is isomorphic to the reduced group $C^*$-algebra, therefore the maximal group $C^*$-algebra does not have non-trivial idempotents either. 

A main result of this paper is an explicit construction of proper idempotents in many group Banach algebras. The construction is based on random walks. The relation 
between Kazhdan projections and random walks can be extracted from the spectral considerations regarding Kazhdan projections for discrete groups in the Hilbert space setting, see \cite{valette,harpe-robertson-valette}. However, spectral theory is not available in our setting.
Our construction turns out to be relevant in various contexts, from expander graphs to ergodic geometry and the Baum-Connes conjecture.

A \emph{Kazhdan projection} for a locally compact group $G$ is an idempotent in the maximal group $C^*$-algebra $C^*_{\max}(G)$, whose image under 
any unitary representation
is the projection onto the space of invariant vectors. Such projections exist in $C^*_{\max}(G)$ 
if and only if the group $G$ has Kazhdan's property $(T)$ \cite{akemann-walter}. They are important for many applications. A classical consequence of their existence is the fact that the
map on $K$-theory induced by the canonical
homomorphism $C^*_{\max}(G)\to C^*_{r}(G)$ from the maximal to the reduced group $C^*$-algebra, fails to be an isomorphism for Kazhdan groups,
see  e.g. \cite[Ch. 2, S. 4]{connes}.
They
play the main role in the
failure of some versions of the Baum-Connes conjecture, since projections of this type can often be shown not to be in the image of the Baum-Connes assembly map \cite{higson-lafforgue-skandalis}.
Kazhdan projections are the main ingredient of Lafforgue's reinforced Banach property $(T)$ and
allowed for the construction of the first examples of expanders with no coarse
embedding into any uniformly convex Banach space \cite{lafforgue-duke}.
Finally, they also play an important role in the generalization of
property $(T)$ to $C^*$-algebras \cite{brown}.

\subsection*{Spectral gaps, Markov operators and projections}


At the core of our paper is a study of a Banach space version of property (T), formulated in a very general setting: with respect to a given family of isometric representations on Banach spaces. We prove that such a property can be characterized in three different ways: the standard spectral gap property, the behavior of the Markov operator on a canonical complement of the fixed vectors subspace, and the existence of a Kazhdan projection, with an explicit formula to calculate it, using Markov operators. 

Indeed, given an isometric representation 
$\pi$ of a group $G$ on a reflexive Banach space $E$, the subspace $E^\pi$ of fixed vectors has a canonical
$\pi$-complement, $E_\pi$ (see Section \ref{subsection: groups and representations} for details). 
Given a probability measure $\mu$ on $G$, let $A_\pi^\mu$ denote the Markov (averaging) operator associated to $\pi$ via $\mu$. We prove the following.

\setcounter{mainthm}{\value{theorem}}
\begin{theorem}\label{theorem in intro: spectral gaps and projections}
Let $G$ be a locally compact group, and $\mathcal{F}$ a family of isometric representations of $G$ on a  uniformly convex family $\mathcal{E}$ of Banach spaces. 
The following conditions are equivalent:
\begin{enumerate} \item the family $\mathcal{F}$ has a spectral gap (see Definition \ref{definition: spectral gaps});\label{theorem in intro: spectral gap}
\item there exists a compactly supported probability measure $\mu$ on $G$ and $\lambda<1$ \label{theorem in intro: norm of Markov}
such that for every isometric representation $\pi \in \mathcal{F}$ of $G$ on $E\in\mathcal{E}$ we have $\left\lVert A_\pi^\mu\vert_{E_\pi}\right\rVert<\lambda$;
\item there exists a compactly supported probability measure $\mu$ on $G$ and a number $\mathfrak{S}<\infty$ such that for every $\pi\in \mathcal{F}$ 
the iterated Markov operators $\left(A_\pi^\mu\right)^k$ 
converge with speed summable to at most $\mathfrak{S}$ to the projection $\mathcal{P}_\pi$ onto $E^\pi$ along $E_\pi$, that is
$$
\left\| \left(A_\pi^\mu\right)^k - {\mathcal{P}}_\pi \right\|\leq a_k\, , 
$$ 
where $\sum_{k} a_k \leq {\mathfrak{S}}$. \label{theorem in intro: condition convergence}
\end{enumerate}
\end{theorem}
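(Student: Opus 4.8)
\emph{The plan} is to prove the cycle $(\ref{theorem in intro: spectral gap})\Rightarrow(\ref{theorem in intro: norm of Markov})\Rightarrow(\ref{theorem in intro: condition convergence})\Rightarrow(\ref{theorem in intro: spectral gap})$, of which only the first arrow carries real content; the argument for the third arrow will in fact also give $(\ref{theorem in intro: condition convergence})\Rightarrow(\ref{theorem in intro: norm of Markov})$ directly, so the last two conditions come out manifestly equivalent. I will use freely the following features of the setup of Section~\ref{subsection: groups and representations}: for $\pi\in\mathcal{F}$ on $E\in\mathcal{E}$ the splitting $E=E^\pi\oplus E_\pi$ is invariant under every $\pi(g)$, hence under every Markov operator $A_\pi^\mu=\int_G\pi(g)\,d\mu(g)$; the operator $A_\pi^\mu$ is a contraction and restricts to the identity on $E^\pi$; $\mathcal{P}_\pi$ is the projection of $E$ onto $E^\pi$ with kernel $E_\pi$; and $\mathcal{E}$ carries a common modulus of uniform convexity $\delta$.

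\emph{The crux}, $(\ref{theorem in intro: spectral gap})\Rightarrow(\ref{theorem in intro: norm of Markov})$: let $Q\subseteq G$ compact and $\kappa>0$ witness the spectral gap of Definition~\ref{definition: spectral gaps}; after enlarging $Q$ I may assume $Q=Q^{-1}$ and $e\in Q$, so that for every $\pi\in\mathcal{F}$ and every $v\in E_\pi$ some $g\in Q$ satisfies $\lVert\pi(g)v-v\rVert\geq\kappa\lVert v\rVert$. I take $\mu$ to be the normalised restriction of a left Haar measure to $Q$. When $Q$ is finite the estimate is immediate: given a unit vector $v\in E_\pi$, pick $g_0\in Q$ with $\lVert\pi(g_0)v-v\rVert\geq\kappa$ (necessarily $g_0\neq e$) and write
$$
A_\pi^\mu v=\tfrac{1}{|Q|}\sum_{g\in Q}\pi(g)v=\tfrac{2}{|Q|}\cdot\tfrac{v+\pi(g_0)v}{2}+\tfrac{|Q|-2}{|Q|}\,w,\qquad\lVert w\rVert\leq 1;
$$
since $v=\pi(e)v$ and $\pi(g_0)v$ are unit vectors at distance $\geq\kappa$, uniform convexity gives $\lVert\tfrac12(v+\pi(g_0)v)\rVert\leq 1-\delta(\kappa)$, hence $\lVert A_\pi^\mu v\rVert\leq 1-\tfrac{2}{|Q|}\delta(\kappa)=:\lambda<1$, a bound independent of $\pi$ and $v$. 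For a general locally compact $G$ the same conclusion $\lVert A_\pi^\mu\vert_{E_\pi}\rVert\leq\lambda(\kappa,\delta)<1$ holds, but this one-line averaging has to be replaced by the quantitative estimate discussed at the end; either way, this is $(\ref{theorem in intro: norm of Markov})$.

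\emph{The easy arrows.} For $(\ref{theorem in intro: norm of Markov})\Rightarrow(\ref{theorem in intro: condition convergence})$: in $E=E^\pi\oplus E_\pi$ the operator $A_\pi^\mu$ is block diagonal, the identity on $E^\pi$ and $A_\pi^\mu\vert_{E_\pi}$ on $E_\pi$, so $(A_\pi^\mu)^k-\mathcal{P}_\pi$ kills $E^\pi$ and equals $(A_\pi^\mu\vert_{E_\pi})^k(\mathrm{Id}-\mathcal{P}_\pi)$ on $E$. Since $\lVert A_\pi^\mu\vert_{E_\pi}\rVert\leq\lambda<1$, a Neumann series on $E_\pi$ bounds $\lVert\mathrm{Id}-\mathcal{P}_\pi\rVert$ by $2/(1-\lambda)$ uniformly in $\pi$, whence $\lVert(A_\pi^\mu)^k-\mathcal{P}_\pi\rVert\leq\tfrac{2}{1-\lambda}\,\lambda^k=:a_k$ and $\sum_k a_k\leq 2/(1-\lambda)^2=:\mathfrak{S}$. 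For $(\ref{theorem in intro: condition convergence})\Rightarrow(\ref{theorem in intro: spectral gap})$: from $\sum_k a_k\leq\mathfrak{S}$ we get $a_k\to 0$, so fix $k_0$ with $a_{k_0}<1$; for a unit $v\in E_\pi$ one has $\mathcal{P}_\pi v=0$, hence $\lVert(A_\pi^\mu)^{k_0}v\rVert\leq a_{k_0}$, and since $(A_\pi^\mu)^{k_0}v=\int_G\pi(g)v\,d\mu^{*k_0}(g)$,
$$
1-a_{k_0}\leq\lVert v-(A_\pi^\mu)^{k_0}v\rVert\leq\int_G\lVert v-\pi(g)v\rVert\,d\mu^{*k_0}(g)\leq\sup_{g\in Q'}\lVert\pi(g)v-v\rVert,\qquad Q':=\supp(\mu^{*k_0}).
$$
Since $Q'$ is compact and $\pi,v$ arbitrary, $(Q',1-a_{k_0})$ witnesses the spectral gap. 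The same $k_0$ yields $(\ref{theorem in intro: condition convergence})\Rightarrow(\ref{theorem in intro: norm of Markov})$ via the compactly supported measure $\mu':=\mu^{*k_0}$, for which $\lVert A_\pi^{\mu'}\vert_{E_\pi}\rVert=\lVert(A_\pi^\mu\vert_{E_\pi})^{k_0}\rVert\leq a_{k_0}<1$.

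\emph{The main obstacle} is the uniform convexity step in the crux. The ``isolate one bad pair in the average'' trick is transparent for finitely supported $\mu$, but when $G$ is a non-discrete locally compact group the set $\{g\in Q:\lVert\pi(g)v-v\rVert\geq\kappa\}$ may carry arbitrarily small $\mu$-mass as $v$ varies, so no single pair can be isolated uniformly. One must instead prove the genuinely quantitative statement that the barycentre of a Borel probability measure supported on the unit sphere of $E$ lies at distance at least $\eta(\varepsilon)>0$ from the sphere as soon as the measure is ``$\varepsilon$-spread'', with $\eta$ depending only on the common modulus $\delta$ and not on $E\in\mathcal{E}$: a Banach-space substitute for the Hilbert-space computation linking $\lVert A_\pi^\mu v\rVert$ to the displacements $\lVert\pi(g)v-v\rVert$ (for symmetric $\mu$, $\langle(\mathrm{Id}-A_\pi^\mu)v,v\rangle=\tfrac12\int_G\lVert\pi(g)v-v\rVert^2\,d\mu(g)$). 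Proving this estimate, and making it uniform over the whole family $\mathcal{E}$, is the technical heart of the theorem, and it is precisely here that the hypothesis that $\mathcal{E}$ is a \emph{uniformly convex family}, rather than a family of uniformly convex spaces with unrelated moduli, is used in an essential way.
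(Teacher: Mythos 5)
Your implications (\ref{theorem in intro: norm of Markov})$\Rightarrow$(\ref{theorem in intro: condition convergence}) and (\ref{theorem in intro: condition convergence})$\Rightarrow$(\ref{theorem in intro: spectral gap}) are correct (the paper obtains the sharper constant $a_k=\lambda^k$ by an induction using $A^\mu_\pi\mathcal{P}_\pi=\mathcal{P}_\pi$, and keeps $\supp\mu$ itself as the Kazhdan set via a telescoping sum, but your versions with $a_k=\frac{2}{1-\lambda}\lambda^k$ and Kazhdan set $\supp(\mu^{\ast k_0})$ suffice for the stated equivalence). The genuine gap is in the crux (\ref{theorem in intro: spectral gap})$\Rightarrow$(\ref{theorem in intro: norm of Markov}): you prove it only when $Q$ is finite, and for a general compact Kazhdan set you explicitly leave the needed estimate unproved. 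Moreover the route you sketch cannot work as stated. The choice of $\mu$ as normalized Haar measure on $Q$ may not even be available ($Q$ can be Haar-null; the paper emphasizes that finite Kazhdan sets occur in non-discrete Lie groups), and, as you yourself observe, the spectral gap gives no lower bound on the $\mu$-mass of $\{g\in Q:\lVert\pi(g)v-v\rVert\geq\kappa/2\}$, because the modulus of continuity of $g\mapsto\pi(g)v$ is not uniform in $v$. Consequently the push-forward of $\mu$ under the orbit map need not be ``$\varepsilon$-spread'' in any mass sense: it can be an arbitrarily small perturbation of a Dirac mass, whose barycentre is as close to the unit sphere as one likes. So no barycentre estimate of the kind you propose follows from the hypotheses; the obstruction lies in the measure, not in the geometry of $\mathcal{E}$, and ``making the modulus uniform over the family'' is not the missing ingredient.

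The paper resolves this by changing the measure rather than by a stronger convexity lemma: it uses admissible measures (Definition \ref{def:admiss}), whose density $\rho$ has an $(\alpha,\beta)$-decomposition with $\int_G\alpha\,d\eta=1$ and $\beta\geq s\cdot\alpha$ for every $s\in Q$, so that $\rho\geq\frac{\alpha+s\cdot\alpha}{M(\alpha,\beta)}$ uniformly in $s\in Q$. Given a unit vector $v\in E_\pi$ one first smooths: $A^\alpha_\pi v\in E_\pi$, so the Kazhdan pair yields $s\in Q$ with $\lVert A^\alpha_\pi v-\pi_s A^\alpha_\pi v\rVert\geq\kappa\,\lVert A^\alpha_\pi v\rVert$. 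Then $A^\mu_\pi v$ is split into a term of norm at most $1-\frac{2}{M(\alpha,\beta)}$ (using the pointwise domination above) plus $\frac{1}{M(\alpha,\beta)}\left(A^\alpha_\pi v+\pi_s A^\alpha_\pi v\right)$, whose norm is at most $\frac{2}{M(\alpha,\beta)}\left(1-\delta_{\mathcal{E}}(\kappa)\right)$ by uniform convexity applied to the two equal-norm vectors $A^\alpha_\pi v$ and $\pi_s A^\alpha_\pi v$. This yields $\lVert A^\mu_\pi\vert_{E_\pi}\rVert\leq 1-\frac{2}{M_\mu}\delta_{\mathcal{E}}(\kappa)$ (Theorem \ref{theorem: spectral gap to markov<1 estimate}); in other words, your ``isolate one bad pair'' trick is carried out at the level of densities, where the pair $(\alpha,s\cdot\alpha)$ has definite mass $2/M$ inside $\rho$ uniformly over $s\in Q$. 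Your finite-$Q$ computation is essentially the paper's Corollary \ref{corollary: main theorem for finitely generated and uniform measure}, corresponding to $\alpha$ a Dirac mass and $\beta$ the characteristic function of a translate of $Q$; it is the passage from this special case to arbitrary compact Kazhdan sets that your proposal is missing.
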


In Theorem \ref{theorem: definition of the projection onto invariant vectors} we give an explicit formula for the projection $\mathcal{P}_\pi$ in terms of the Neumann series of the Markov operator:
\begin{equation}\label{formula:Neumannser}
\mathcal{P}_\pi = I_E- \left(\sum_{n=0}^{\infty} \left(A_{\pi }^{\mu }\right)^n \right) \left(I_E-A_\pi^\mu\right).
\end{equation}

The hypothesis of uniform convexity is needed only in the implication \eqref{theorem in intro: spectral gap} $\Rightarrow$\eqref{theorem in intro: norm of Markov} and \eqref{theorem in intro: condition convergence}, for the other implications it suffices to have a family of complemented representations on Banach spaces, in the sense of Definition \ref{def:compl}.

When $G$ has Kazhdan's property $(T)$ and $E$ is a Hilbert space, Theorem \ref{theorem in intro: spectral gaps and projections} holds for $\mathcal{F}$ the family of all unitary representations of $G$. However, as Theorem \ref{theorem in intro: spectral gaps and projections} is formulated in terms of a family of representations, it also applies in the setting of Property $(\tau )$ (see Section \ref{section: tau and expanders for Banach} and the corresponding paragraph later in the Introduction), of property $(T\ell_p)$ introduced in \cite{bekka-olivier} etc.

The equivalence in Theorem \ref{theorem in intro: spectral gaps and projections} has an effective side to it, described below.
Given a Kazhdan pair $(Q, \kappa )$ defining the spectral gap  
(see Definition \ref{definition: spectral gaps}), 
the conditions \eqref{theorem in intro: norm of Markov} and \eqref{theorem in intro: condition convergence} 
hold for a large class of measures, which we call \emph{admissible with respect to the Kazhdan set $Q$}. These are explicitly constructed by means of $Q$, see Definition \ref{def:admiss}; their particular construction is motivated by the sought after connection with Markov operators, which cannot function for general measures, see Remark \ref{rmk:referee}. For every such measure $\mu$, a constant $\lambda$ as in \eqref{theorem in intro: norm of Markov} can be computed using the Kazhdan constant $\kappa$, 
the modulus of uniform convexity of the family $\mathcal E$ and the choice of an appropriate compactly supported function on $G$ associated to $\mu$. Property \eqref{theorem in intro: condition convergence} then holds with $a_k =\lambda^k$. Conversely, given a measure $\mu $ and $\lambda \in (0,1)$ satisfying either \eqref{theorem in intro: norm of Markov} or \eqref{theorem in intro: condition convergence} with $a_k=\lambda^k$, the support of $\mu$ is a Kazhdan set and its corresponding Kazhdan constant is $1-\lambda $. This implication applies, for instance, in the case of semisimple Lie groups with finite center, and their unitary representations on Banach spaces, to any probability measure with symmetric support not contained in a closed amenable subgroup \cite[Theorem C]{shalom-kazhdan}.

One of the advantages of Theorem \ref{theorem in intro: spectral gaps and projections} 
is the high degree of flexibility in ensuring uniformity of several parameters for classes of
isometric representations. For instance, in the case of groups admitting finite Kazhdan sets (see Section \ref{sec:Kazhdansets}) this uniformity depends only on three quantities:
\begin{enumerate}
\renewcommand{\theenumi}{\alph{enumi}}
\item the Kazhdan constant of the family of representations,
\item the cardinality of the Kazhdan set $Q$,
\item the modulus of uniform convexity of the Banach spaces.
\end{enumerate}
It does not even depend on the group $G$, as long as we can arrange the above three items to have uniform bounds.


For applications, the existence of finite Kazhdan sets is a considerable asset: the averages become finite, the random walks discrete and an algorithmical approach and the use of computer become possible (see for instance Theorem \ref{theorem: definition of the projection onto invariant vectors}, Remark \ref{rem:algo} and Section \ref{sec:ergodic}). As it turns out, the existence of such finite sets is ensured in many cases outside the class of finitely generated groups, and in many cases the sets are described explicitly, as explained briefly in Section \ref{sec:Kazhdansets}.

\subsection*{Kazhdan projections in group Banach algebras and Lafforgue's reinforced Banach property $(T)$}

The uniform convergence described in Theorem \ref{theorem in intro: spectral gaps and projections}, \eqref{theorem in intro: condition convergence}, depending on the Kazhdan constant, the modulus of uniform convexity of the family $\mathcal E$, and the choice of the measure $\mu$, shows that the existence of a Kazhdan projection in  group Banach algebras is a consequence
of a uniform version of property $(TE)$. Property $(TE)$ was introduced and studied in \cite{fisher-margulis,bfgm} as a natural generalization of property $(T)$ from Hilbert to Banach spaces. 

\begin{theorem}[see Theorem \ref{theorem: Kazhdan projections in algebras defined via a class of reps} and Corollary \ref{corollary: Kazhdan projection and uniform (TE)}]\label{theorem: introduction Kazhdan projections in Lp algebra}
Let $G$ be a locally compact group and let $\mathcal{F}$ be a family of isometric representations of $G$ on a uniformly convex family of  Banach spaces.  
There exists a Kazhdan projection $p\in C_{\mathcal{F}}(G)$ if and only if the family $\mathcal{F}$ has a spectral gap.

In particular, if $G$ has Kazhdan's property $(T)$ then there exists a Kazhdan projection in the $L_p$-maximal group algebra $C_{\max}^p(G)$ for
every $1<p<\infty$.
\end{theorem}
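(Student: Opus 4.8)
The plan is to obtain this statement as a corollary of Theorem~\ref{theorem in intro: spectral gaps and projections}, once one observes that the iterated Markov operators appearing there are precisely the images, under the representations of $\mathcal{F}$, of the convolution powers of a single element of the Banach algebra $C_{\mathcal{F}}(G)$. Recall that $C_{\mathcal{F}}(G)$ is the completion of $C_c(G)$ (equivalently of $L^1(G)$) in the norm $\|f\|_{\mathcal{F}}=\sup_{\pi\in\mathcal{F}}\|\pi(f)\|$, so that each $\pi\in\mathcal{F}$ extends to a contractive algebra homomorphism $\pi\colon C_{\mathcal{F}}(G)\to\mathcal{B}(E)$ with $\pi(\mu^{*k})=(A^\mu_\pi)^k$ whenever $\mu\in L^1(G)$.

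For the implication ``spectral gap $\Rightarrow$ Kazhdan projection'' I would start from Theorem~\ref{theorem in intro: spectral gaps and projections}~\eqref{theorem in intro: condition convergence} and fix a compactly supported probability measure $\mu$ as provided there; choosing $\mu$ to be \emph{admissible} with respect to a Kazhdan set (as in the effective discussion following the theorem) guarantees that $\mu$ has an $L^1$-density, hence $\mu\in L^1(G)\subset C_{\mathcal{F}}(G)$. The uniform summable convergence $\|(A^\mu_\pi)^k-\mathcal{P}_\pi\|\le a_k$, $\sum_k a_k<\infty$, then says exactly, via the definition of $\|\cdot\|_{\mathcal{F}}$, that $(\mu^{*k})_k$ is a Cauchy sequence in $C_{\mathcal{F}}(G)$, since $\|\mu^{*n}-\mu^{*m}\|_{\mathcal{F}}\le a_n+a_m$ for $m\ge n$. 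Its limit $p$ is idempotent, because multiplication in $C_{\mathcal{F}}(G)$ is jointly continuous and $\mu^{*n}*\mu^{*n}=\mu^{*2n}$, and it satisfies $\pi(p)=\lim_k(A^\mu_\pi)^k=\mathcal{P}_\pi$ for every $\pi\in\mathcal{F}$ by continuity of $\pi$; thus $p$ is a Kazhdan projection, and one checks that it coincides with the element given by the Neumann series \eqref{formula:Neumannser} (read in the unitization of $C_{\mathcal{F}}(G)$ when $G$ is non-discrete). The analytic content here is entirely in Theorem~\ref{theorem in intro: spectral gaps and projections}; the only care needed is the measure-theoretic bookkeeping, namely that $\mu$ genuinely lies inside the algebra and the harmless unitization subtlety for non-compact $G$.

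For the converse, ``Kazhdan projection $\Rightarrow$ spectral gap'', I would argue directly rather than through Theorem~\ref{theorem in intro: spectral gaps and projections}. Given a Kazhdan projection $p$, approximate it by $f\in C_c(G)$ with $\|p-f\|_{\mathcal{F}}<\tfrac14$ and set $Q=\supp f$ (a compact set) and $c=\int_G f$. Testing against the trivial one-dimensional representation $1_G$, which belongs to $\mathcal{F}$ in all the settings under consideration and for which $1_G(p)=\mathcal{P}_{1_G}=1$, gives $|c-1|\le\|f-p\|_{\mathcal{F}}<\tfrac14$, hence $c>\tfrac34$. Now for any $\pi\in\mathcal{F}$ and any unit vector $v\in E_\pi$ with $\sup_{g\in Q}\|\pi(g)v-v\|<\tfrac{1}{4\|f\|_1}$ one has, on one hand, $\|\pi(f)v\|=\|\pi(f)v-\pi(p)v\|<\tfrac14$ because $\pi(p)v=\mathcal{P}_\pi v=0$, and on the other hand $\|\pi(f)v-c\,v\|\le\int|f(g)|\,\|\pi(g)v-v\|\,dg<\tfrac14$, so $c=\|c\,v\|<\tfrac12$, a contradiction. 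Therefore $\bigl(Q,\tfrac{1}{4\|f\|_1}\bigr)$ forbids almost invariant unit vectors in $E_\pi$ uniformly over $\pi\in\mathcal{F}$, which is the spectral gap property of Definition~\ref{definition: spectral gaps} (after, if necessary, passing to its equivalent formulation on the canonical complements). The minor delicate point I anticipate here is exactly this use of a representation with nonzero invariant vectors to pin $c$ near $1$; this is automatic whenever $1_G\in\mathcal{F}$, and without it the statement degenerates.

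Finally, for the ``in particular'' clause: for fixed $1<p<\infty$ all spaces $L^p(\mu)$ share a common modulus of uniform convexity (Clarkson's inequalities), so the family $\mathcal{F}$ of all isometric representations of $G$ on $L^p$-spaces is a uniformly convex family; property $(T)$ implies that this family has a spectral gap (see \cite{bfgm}), and the first part then yields a Kazhdan projection in $C_{\mathcal{F}}(G)=C^p_{\max}(G)$. Overall I do not expect a deep obstacle, since Theorem~\ref{theorem in intro: spectral gaps and projections} carries the weight; the substantive points are the two bookkeeping issues flagged above.
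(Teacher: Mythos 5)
Your proposal is correct, and its forward direction is essentially the paper's own argument: the paper likewise takes a continuous admissible density $\rho\in C_c(G)$ for a Kazhdan set and shows $\set{\rho^k}$ is Cauchy in $\Vert\cdot\Vert_{\mathcal F}$ (there via the factorization $\pi(\rho^m)\bigl(I-\pi(\rho^{n-m})\bigr)$ and the bound $\lambda^m$ on $E_\pi$, rather than via $a_n+a_m$, but this is the same mechanism), with the limit being the Kazhdan projection. Two small bookkeeping points on your side: admissible measures are allowed to have a discrete part, so for non-discrete $G$ you must explicitly choose a \emph{continuous} admissible measure to ensure the density lies in $C_c(G)$ (the paper does exactly this); and the paper's definition of a Kazhdan projection requires centrality, which you omit --- it follows in one line since $\mathcal{P}_\pi$ commutes with every $\pi_g$, hence $\pi(p)$ commutes with $\pi(f)$ for all $f\in C_c(G)$ and all $\pi\in\mathcal F$. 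Where you genuinely diverge is the converse: the paper deduces ``Kazhdan projection $\Rightarrow$ spectral gap'' from Theorem \ref{theorem:Lafforgue}\eqref{theorem: Laf item ii}, i.e.\ it approximates $p$ by $\rho\in C_c(G)$ of integral $1$, gets exponential convergence of $\rho^n$ to $p$, and then runs the summable-convergence argument of Theorem \ref{thm:casm} to produce the quantitative Kazhdan pair $\left(\supp\rho,\frac{1-\lambda}{a_\rho}\right)$; you instead give a direct Akemann--Walter-style argument using a single approximant $f$ with $\Vert p-f\Vert_{\mathcal F}<\tfrac14$, pinning $\int f$ near $1$ via the trivial representation (which is legitimately available, since the paper's standing assumption in Section 4 is that $\mathcal F$ contains the trivial representation) and contradicting the existence of $(Q,\tfrac{1}{4\Vert f\Vert_1})$-almost invariant unit vectors in $E_\pi$. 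Your route is shorter and avoids iterating the Markov operator, at the cost of a cruder Kazhdan constant; the paper's route is more uniform with the rest of its machinery and yields a constant expressed through the Markov operator norm. The ``in particular'' clause is handled the same way in both: uniform convexity of $L_p$ with a fixed modulus plus the $(T)\Rightarrow(TL_p)$ (uniformly) result of \cite{bfgm}.
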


Here, $C_{\mathcal{F}}(G)$ is a natural version of the maximal $C^*$-algebra of $G$ for the family $\mathcal{F}$ of representations, see Definition \ref{def:cf}. 
Banach group algebras for larger than isometric classes of representations were
introduced and studied by V.~Lafforgue \cite{lafforgue-duke}.

In the case of Hilbert spaces and unitary representations two previous proofs are known of the general relation between property $(T)$ and the existence of Kazhdan projections.
The first is due to Akemann and Walter \cite{akemann-walter}
and it relies on positive definite functions, a tool available essentially only in Hilbert spaces. The second proof, using minimal projections in $C^*$-algebras, is due to Valette \cite{valette}. Our approach shares some elements with the latter proof. 
Namely, Valette shows that property $(T)$ is equivalent to 1 being isolated in the spectrum of the Markov operator $\pi(u)$, for every unitary representation $\pi$, 
where $u$ is the uniform measure on a symmetric
generating set. The spectral projection associated to 1 is then the Kazhdan projection, and it follows that $u$, as an element of the maximal group $C^*$-algebra,
can be realized as a norm limit of explicit polynomials in $u$. In our proof, the existence of the Kazhdan projection in the algebra is the consequence of the 
appropriate convergence of a random walk.
Another construction of Kazhdan projections, yielding an explicit sequence of compactly supported functions approximating the projection, 
but limited to a specific class of Lie groups, has been provided by V. Lafforgue \cite{lafforgue-duke}.

The topic of operator algebras on $L_p$-spaces is an emerging direction in non-commutative geometry.
For certain groups the construction of Lafforgue \cite{lafforgue-duke} provided Kazhdan projections for maximal $L_p$-group algebras.  
Additionally, an approach to the Novikov conjecture via $L_p$-versions of the Baum-Connes conjecture has been recently developed by Kasparov and Yu.
Theorem \ref{theorem: introduction Kazhdan projections in Lp algebra} shows that for groups with property $(T)$ the same obstructions 
as in the Hilbert space case are likely to exist in $K$-theory.

Theorem \ref{theorem in intro: spectral gaps and projections} allows to compare V.~Lafforgue's definition of reinforced Banach property $(T)$ \cite{lafforgue-duke}
to other generalizations of property $(T)$ to Banach spaces, i.e. properties $(TE)$ and $FE$  \cite{fisher-margulis,bfgm}. The question of such a comparison has been considered by several experts previously. In \cite{lafforgue-jta} it was shown that the reinforced Banach property $(T)$ implies $FE$, and
in  \cite{bfgm} it was shown that property $FE$ implies $(TE)$. Since Lafforgue's reinforced Banach property $(T)$ is formulated in terms of existence of Kazhdan projections
in certain group Banach algebras, Theorem \ref{theorem: introduction Kazhdan projections in Lp algebra} provides implications in the other
direction. We discuss this in detail in Section \ref{section: relations between different notions of T}.

\medskip

As Theorem \ref{theorem in intro: spectral gaps and projections} holds for a family of representations, it can be applied in the context of property $(\tau)$, introduced by Lubotzky. Thus, we use Theorem \ref{theorem in intro: spectral gaps and projections} to formulate a generalization of property $(\tau)$ to uniformly convex Banach spaces, which is consistent with a notion of expanders for Banach spaces defined using Poincar\'e inequalities (Definition \ref{def:e-expanders}).   
 
 More precisely, for a uniformly convex Banach space $E$
we introduce property $(\tau_E)$ by the same definition as for Hilbert spaces, requiring that
certain isometric representations factoring through finite quotients of $G$ are separated from the trivial representation; that is, they have
a uniform spectral gap. The following is a consequence of Theorem \ref{theorem in intro: spectral gaps and projections}.

\setcounter{expthm}{\value{theorem}}
\begin{theorem}\label{theorem: expanders theorem in the text}\label{thm:introExp}
Let $E$ be a uniformly convex Banach space, let $G$ be a finitely generated residually finite group 
and let ${\mathcal{N}}=\set{N_i}$ be a collection of finite index subgroups with trivial intersection.
The following conditions are equivalent:
\begin{enumerate}
\item $G$ has property $(\tau_E)$ with respect to ${\mathcal{N}}=\set{N_i}$ and a symmetric Kazhdan set $Q$; \label{enumi: tau_E}
\item the Cayley graphs $\operatorname{Cay}(G/N_i,Q)$ form a sequence of $E$-expanders;\label{enumi: E-expanders}
\item there exists a Kazhdan projection $p\in C_{\mathcal{N}(E)}(G)$.\label{enumi: Kazhdan projection}
\end{enumerate}
\end{theorem}

In the Hilbert space case the algebra appearing in
the condition \eqref{enumi: Kazhdan projection} is a $C^*$-algebra. In that case the existence of the Kazhdan projection appearing in the above theorem was hinted at in \cite{higson,higson-lafforgue-skandalis}. It is \textit{via} such projections that the main existing counter-example to the coarse Baum-Connes conjecture was constructed, using expanders \cite{higson-lafforgue-skandalis}.

\begin{remark}\label{rem:Kset}
The Kazhdan set $Q$ in Theorem \ref{theorem: expanders theorem in the text} does not necessarily generate $G$. Examples of Kazhdan sets that are not generating exist already for groups $G$ and collections ${\mathcal{N}}$ having the classical property  $(\tau )$. For instance, if   $G=SL_n (\ZZ )$, a finite symmetric set generating a subgroup Zariski dense in $SL_n (\R )$ is a Kazhdan set for an appropriate choice of ${\mathcal{N}}$ (see \cite{bourgain-varju} and references therein). See also \cite{shalom-expander} for an earlier example of non-generating Kazhdan set for actions on expanders that are finite quotients. 
\end{remark}

\begin{remark}
A consequence of Theorem \ref{thm:introExp} is a modified estimate of rapid mixing of random walks for expanders coming from groups with property (T). Indeed, for a regular graph $\Gamma$ on $n$ vertices such that the normalized adjacency matrix $\widehat{A}$ has the second and the last eigenvalues at most $\alpha$ in absolute value, it is known \cite[Theorem 3.2]{HooryLinialW} that for every positive integer $k$,
  \begin{equation}
  \| \widehat{A}^k \mathbf{p} - \mathbf{u}\|_1 \leq \sqrt{n} \alpha^k, 
\end{equation} where $\mathbf{p}\in {\mathbb{R}}_+^n$ is an arbitrary probability distribution (i.e. the sum of its non-negative coordinates is $1$) and $\mathbf{u}$ is the uniform probability distribution (i.e. all its coordinates are equal). In other words, the random walk converges in $\ell_1$ to the uniform distribution exponentially fast with base $\alpha$, but the speed is slowed down by a multiplicative factor of square of the number of vertices.

For expanders coming from finite quotients of a group $G$ with property (T), the multiplicative factor in $n$ can be improved. Indeed, it follows from Remark \ref{rmks:kazhdanlp}, \eqref{lp1}, from Theorem \ref{theorem in intro: spectral gaps and projections}, and from H\"older's inequality that given a finite set $S$ generating $G$, for every $\epsilon \in \left( 0, \frac{1}{2} \right)$ there exists $\alpha =\alpha (\epsilon , G)$ such that on any Cayley graph $\Gamma$ of a finite quotient $G/N$ with respect to the image of $S$, the rapid mixing on $\Gamma$ can be estimated as follows:
 \begin{equation}\label{eq:alpha}
  \| \widehat{A}^k \mathbf{p} - \mathbf{u}\|_1 \leq n^\epsilon\, \alpha^k,\, \forall k\in {\mathbb{N}},  
\end{equation} where $n =\left|G/N \right|$, $\widehat{A}$ is the normalized adjacency matrix of $\Gamma$ and $\alpha$ is the constant provided by the effective version of Theorem \ref{theorem in intro: spectral gaps and projections}, (iii), for isometric representations of $G$ on $L^p$--spaces, with $p= \frac{1}{1-\epsilon}$. Moreover, according to Remark \ref{rem:Kset}, the same result can be obtained for random walks restricted to certain edges, i.e. for $\widehat{A}$ the normalized matrix of the adjacency only by edges labeled by elements in a proper subset $Q\subset S$, provided that $Q$ is a Kazhdan set.                    

The right choice of $\epsilon$ in \eqref{eq:alpha} is decided by the value $p\in [1,2]$ yielding the minimal value for $\alpha\in (0,1)$.  
\end{remark}

\subsection*{Applications to ergodic theory}

Another area in which Theorem \ref{theorem in intro: spectral gaps and projections} finds natural applications is ergodic theory. Consider, for instance, a group with property (T) acting ergodically on a probability space $(X, \nu )$, and let $f$ be an arbitrary function in $L^2(X)$. If the two operators appearing in the equality \eqref{formula:Neumannser} are applied to $f$, then the left hand side becomes $\int_X f \, {\mathrm{d}}\nu$, and the entire formula becomes a von Neumann-type theorem, in which an exact explicit formula is provided, instead of just an estimate for the remainder.

Thus, while for ergodic actions of ame\-na\-ble groups the best way to average is 
{\textit{via}} sequences of F\o lner sets, for ergodic actions of groups with property $(T)$ a most effective averaging is {\textit{via}} sequences of measures with compact support approximating the Kazhdan projection.

\comment
Generalizations to group actions of ergodic theorems of Birkhoff and von Neumann have been an object of significant interest, see \cite{bufetov-klimenko,nevo}
for a survey and  history of this subject. An early result of this type  is the classical ergodic theorem of Oseledec \cite{oseledets}, 
that the time averages of a function over convolution powers of 
a probability measure converge to the the mean of the function over the space.
Theorem \ref{theorem in intro: spectral gaps and projections} allows to achieve a much stronger type of convergence, in norm topology instead of the strong operator topology,
and with uniform estimates, depending on the spectral gap. It also allows to average using measures with finite support (equal to a Kazhdan set) even in the case of non-discrete groups (see Section \ref{sec:Kazhdansets}). 
\setcounter{ergthm}{\value{theorem}}
\begin{theorem}\label{t2}
Let $G$ be a locally compact group and let $(X_i, \nu_i), i\in I,$ be a family of probability spaces endowed with measure preserving ergodic actions of $G$. Consider also a collection $\mathcal E$ of uniformly convex  Banach spaces, and a number $p$ in $(1 , \infty )$. 

Assume that a family $\mathcal{F}$ of isometric representations of $G$ on $L_p(X_i,\nu;E)$, with $i\in I$ and $E\in {\mathcal E}$, induced by the measure preserving actions of $G$, has a spectral gap and let $(Q, \kappa )$ be a Kazhdan pair. 

For every $Q$--admissible measure $\mu$  on $G$ there exists $\lambda<1$, depending only on $p$, the normalizing factor of $\mu$, the modulus of convexity of $\mathcal E$, and $\kappa$, such that 
\begin{equation}\label{equation: convergence mean ergodic thm for a class F}
\left\Vert A_\pi^{\mu^k} f - \int_X f \, {\mathrm{d}}\nu\right\Vert_p \le \lambda^k \Vert f\Vert_p.
\end{equation}
\end{theorem}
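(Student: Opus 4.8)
The plan is to reduce Theorem~\ref{t2} to the already-established equivalence \eqref{theorem in intro: spectral gap} $\Leftrightarrow$ \eqref{theorem in intro: condition convergence} of Theorem~\ref{theorem in intro: spectral gaps and projections}, the only genuinely new input being the identification, in the present ergodic framework, of the abstract projection $\mathcal{P}_\pi$ with the integration operator $f\mapsto\int_X f\,\mathrm{d}\nu$.

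First I would set up the representations. Fix $i\in I$ and $E\in\mathcal{E}$, abbreviate $(X,\nu)=(X_i,\nu_i)$, and let $\pi\in\mathcal{F}$ be the isometric representation of $G$ on $L_p(X,\nu;E)$ induced by the measure-preserving action, $(\pi(g)f)(x)=f(g^{-1}x)$. Since $E$ is uniformly convex and $1<p<\infty$, the Bochner space $L_p(X,\nu;E)$ is again uniformly convex, in particular reflexive, so it admits the canonical decomposition $L_p(X,\nu;E)=L_p(X,\nu;E)^{\pi}\oplus L_p(X,\nu;E)_{\pi}$ of Section~\ref{subsection: groups and representations}. The next step is to show that $L_p(X,\nu;E)^{\pi}$ is precisely the subspace of $\nu$-a.e.\ constant ($E$-valued) functions, and that $L_p(X,\nu;E)_{\pi}$ is the closed subspace $\{f:\int_X f\,\mathrm{d}\nu=0\}$ of functions with vanishing Bochner integral. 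The first claim follows at once from ergodicity: $\pi(g)f=f$ forces $f$ to be essentially $G$-invariant and hence a.e.\ constant. For the second claim one checks that the mean-zero subspace is $\pi$-invariant, complements the constants, and equals the closure of $\operatorname{span}\{v-\pi(g)v\}$ — the defining property of the canonical $\pi$-complement; the nontrivial inclusion is once more a mean-ergodic consequence of ergodicity. It then follows that $\mathcal{P}_\pi$ is the projection onto constants along mean-zero functions, i.e.\ $\mathcal{P}_\pi f=\int_X f\,\mathrm{d}\nu$, understood as a constant element of $L_p(X,\nu;E)$.

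Next I would record two routine facts. One: the Markov operators are multiplicative under convolution, $A_\pi^{\mu*\mu'}=A_\pi^{\mu}A_\pi^{\mu'}$ (immediate from the definition and Fubini), so $A_\pi^{\mu^k}=(A_\pi^{\mu})^{k}$ for the $k$-fold convolution power $\mu^k$, which is again a compactly supported probability measure. Two: by the spectral gap hypothesis on $\mathcal{F}$ and the effective form of Theorem~\ref{theorem in intro: spectral gaps and projections} (implication \eqref{theorem in intro: spectral gap} $\Rightarrow$ \eqref{theorem in intro: condition convergence}), applied with the $Q$-admissible measure $\mu$, there is a constant $\lambda\in(0,1)$ depending only on $p$, the normalizing factor of $\mu$, the modulus of convexity of $\mathcal{E}$ and $\kappa$ — in particular uniform in $i\in I$ and $E\in\mathcal{E}$ — with $\bigl\lVert (A_\pi^{\mu})^{k}-\mathcal{P}_\pi\bigr\rVert\le\lambda^{k}$ for every $\pi\in\mathcal{F}$. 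Combining the three ingredients, for every $f\in L_p(X,\nu;E)$,
\begin{equation*}
\left\Vert A_\pi^{\mu^k}f-\int_X f\,\mathrm{d}\nu\right\Vert_p=\left\Vert (A_\pi^{\mu})^{k}f-\mathcal{P}_\pi f\right\Vert_p\le\bigl\lVert (A_\pi^{\mu})^{k}-\mathcal{P}_\pi\bigr\rVert\,\Vert f\Vert_p\le\lambda^{k}\Vert f\Vert_p,
\end{equation*}
which is \eqref{equation: convergence mean ergodic thm for a class F}.

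The main obstacle is the middle step: proving that the \emph{canonical} $\pi$-complement coincides with the mean-zero subspace and therefore that $\mathcal{P}_\pi$ is integration against $\nu$. This is where ergodicity — as opposed to mere measure preservation — is used, and one must be careful that the argument (essentially the mean ergodic theorem together with the description of $E_\pi$ as a closure of coboundaries) goes through verbatim for Bochner $L_p$-spaces with values in an arbitrary uniformly convex $E$, not only in the scalar-valued case. Everything else is either a direct invocation of Theorem~\ref{theorem in intro: spectral gaps and projections} or an elementary computation.
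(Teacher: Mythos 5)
Your proposal is correct and follows essentially the same route as the paper, whose proof is precisely the one-line observation that $\mathcal{P}_\pi f=\int_X f\,\mathrm{d}\nu$ (by ergodicity) together with an appeal to Theorem \ref{theorem in intro: spectral gaps and projections} (in its effective form, Theorem \ref{theorem: spectral gap to markov<1 estimate}). The only difference is that you spell out the identification of $E^\pi$ with the constants and of $E_\pi$ with the mean-zero subspace of the Bochner space, a verification the paper leaves implicit here (and carries out via duality, in the scalar case, in Section \ref{section: shrinking targets}).
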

\begin{proof}
Since $\mathcal{P}_\pi = \int_X  f \, d\nu$, the assertion follows from Theorem \ref{theorem in intro: spectral gaps and projections} (or, more precisely, from Theorem \ref{theorem: spectral gap to markov<1 estimate}).
\end{proof}

Theorem \ref{t2} becomes very concrete in certain cases. When $G$ has property $(T)$ and $E=\RR$, we obtain a uniform quantitative ergodic theorem for \emph{all} probability preserving actions of $G$ and for any fixed $1<p<\infty$, 
see Theorem \ref{theorem: quantitative mean ergodic theorem for Kazhdan groups}.
Another case is when $G=SL_3(F)$ for $F$ a non-archimedean local field. It follows from \cite{lafforgue-duke} that for any uniformly convex $E$
and any $1<p<\infty$ the family of isometric representations of $G$ on $L_p(X,\nu;E)$ has a spectral gap. Consequently, the above theorem holds for such $G$ 
and any fixed $E$ and $p$ as above. 
\endcomment 

Ergodic theory is a natural setting in which Theorem \ref{theorem in intro: spectral gaps and projections} can be applied. We use the theorem to investigate \emph{shrinking target problems}, which ask how often does a typical orbit of an action hit a sequence of shrinking subsets. This problem for orbits of cyclic (or uniparameter) groups in locally symmetric spaces, and for shrinking sequences of neighborhoods of a cusp, has been thoroughly investigated and answered in \cite{sullivan:log , kleinbock-margulis:log}. In the latter paper was formulated the problem of finding similar results for shrinking sequences of neighborhoods of a point. The case of rank 1 locally symmetric spaces has been settled \cite{sullivan:log,maucourant}, but the problem remains open in the case of higher rank. Theorem 
\ref{theorem in intro: spectral gaps and projections} allows to provide quantitative estimates in terms of random walks for the behavior of an ergodic action of a group with property $(T)$ with respect to a shrinking target.
For instance, we have the following theorem (see Section \ref{sec:ergodic}).

\begin{theorem}[see Theorem \ref{thm:RW}]\label{theorem: shrinking targets intro statement}
Let $G$ be a locally compact group, and $\Gamma $ a lattice in it. Let $\set{\Omega_n}$ be a sequence of measurable subsets in $G/\Gamma$. 

Assume that a locally compact group $\Lambda$ with property $(T)$ acts ergodically on $G/\Gamma$. Let $\mu$ be a probability measure on $\Lambda$ 
admissible with respect to some Kazhdan set, and let $X_n$ be the random variable representing the $n$-th step of the random walk defined by $\mu$.
\begin{enumerate}
\item If $\sum_n \nu (\Omega _n)$ is finite then for almost every $x\in G/\Gamma$
$$\sum_{n\in \N }\mathbb{P} \left( X_n (x) \in \Omega_n \right) < \infty .$$
\item If $\sum_n \nu (\Omega _n)$ is infinite then for every $\varepsilon >0$ and 
for almost every $x\in G/\Gamma$, 
\begin{equation}\label{eq:proba-nonsum}
\sum_{n\le N}{\mathbb{P}} \left( X_n (x) \in \Omega_n \right) = S_N + O\left( S_N^{\varepsilon} \right) \, , 
\end{equation} where
 $S_N = \sum_{n\le N} \nu (\Omega _n)$.
In particular, for infinitely many $n\in \N$, 
$$
{\mathbb{P}} \left( X_n(x) \in \Omega_n \right) > \nu (\Omega _n) - O\left( \nu ( \Omega _n )^{\varepsilon} \right)  . 
$$  
\end{enumerate}
\end{theorem}

Note that the error term estimate above, $O\left( S_N^{\varepsilon} \right)$, can only be obtained, as far as we know, using the fact that property (T) is equivalent to the similar property for unitary actions on $L^p$--spaces (see Remark 
\ref{rmks:kazhdanlp}), \eqref{lp1}) and the equivalence in Theorem 
\ref{theorem in intro: spectral gaps and projections} for actions on $L^p$--spaces. 

\medskip

Moreover, when $\Lambda $ is endowed with a word metric $\dist_\Lambda$ corresponding to an arbitrary compact generating set, in the theorem above one may obtain an estimate similar to the one in \eqref{eq:proba-nonsum} for the smaller probabilities 
$$
{\mathbb{P}} \left( X_n (x) \in \Omega_n , \dist_\Lambda (X_n, e)\geq an  \right)\,, 
$$ where $a>0$ is a constant depending on the choice of the word metric and of~$\mu$. 

These results apply for instance when $G$ is a semisimple group, $\Gamma$ a lattice in $G$ and $\Lambda $ an infinite subgroup of $G$, or when $G/\Gamma$ is the $n$-dimensional torus and $\Lambda $ is a subgroup of $SL_n(\Z )$.

Theorem \ref{theorem: shrinking targets intro statement} has several applications, explained in Section \ref{sec:ergodic}. We mention here only one of them. Consider the symmetric space $\calp_s= SO(s)\backslash G$ and the locally symmetric space $\calp_s/SL(s,\Z )$. Let $D$ be the common dimension of $\calp_s$ and $\calp_s/SL(s,\Z )$, and let $x$ be an arbitrary point in the latter space. For a fixed slope that is maximal singular (see the end of Section \ref{sec:ergodic} for a definition) there exists a constant $a>0$ such that the following holds. Almost every horosphere $\mathcal H$ in $\calp_s$ with point at infinity of the fixed given slope, and almost every point $h$ on $\mathcal H$, have the property that the projection of the annulus ${\mathcal{H}} \cap \left[ B(h, n)\setminus B(h, an) \right]$ in $\calp_s/SL(n,\Z )$ intersects the shrinking ball $ B(x, n^{1/D}) $ for infinitely many $n$. The same holds if one replaces the condition above with the one that the projection of ${\mathcal{H}} \cap \left[ B(h, n)\setminus B(h, an) \right]$ in $\calp_s/SL(n,\Z )$ rises infinitely many times into the cusp at a height between $\eta \ln n$ and $\eta \ln n +\frac{1}{n^\delta}$, where the height in the cusp is measured by a certain fixed Busemann function, $\delta $ can be any number in $(0,1)$, and the constant $\eta$ depends on the choice of $\delta$ and of the Busemann function measuring the height.

For further details and applications we refer to the end of Section \ref{sec:ergodic}.

\subsection*{Obstructions to the coarse Baum-Connes conjecture}

The final application we present concerns obstructions to the coarse Baum-Connes conjecture. In \cite{higson, higson-lafforgue-skandalis} it was shown that 
the coarse Baum-Connes conjecture fails for coarse disjoint unions of expander graphs arising from an exact group with property $(T)$. The reason is the 
existence of a certain Kazhdan-type projection, a non-compact ghost projection, which is a limit of finite propagation operators. Until now such ghost projections
were constructed only for expanders. Willett and Yu  formulated the following

\begin{problem}[{\cite[Problem 5.4]{willett-yu}}]
Find other geometric examples of ghost projections.
\end{problem}

Motivated by their question we construct non-compact ghost projections for warped cones \cite{roe-warped}.

Let $G$ be a finitely generated group acting ergodically by probability preserving Lipschitz homeomorphisms on a compact metric probability 
measure space $(M,\dist,m)$.
Assume that the measure $m$ is upper uniform, i.e. it is distributed uniformly over $M$ with respect to the metric, see Definition \ref{definition : upper uniform measure}.
Denote by $\mathcal{O}=\mathcal{O}_G(M)$ the warped cone associated to the action of $G$ on $M$, as defined in Section \ref{sect:warped} (see also \cite{roe-warped}).
\setcounter{warpthm}{\value{theorem}}
\begin{theorem}[see Theorem \ref{theorem: non-compact ghost projection}]\label{theorem: intro warped cones}
If the action of $G$ on $(M,m)$ has a spectral gap then there exists a non-compact ghost projection $\mathfrak{G}\in \mathcal{B}(L_2(\mathcal{O}))$ which 
is a limit of finite propagation operators.
\end{theorem}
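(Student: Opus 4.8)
The plan is to produce $\mathfrak{G}$ as the norm limit, over the radial parameter of the warped cone, of the iterated Markov operators attached to the Koopman representation of $G$ on $L_2(M,m)$, and then to read off the ghost property from the way $m$ is spread out inside the cone metric. \textbf{Step 1 (the averaging operators on $M$).} The probability preserving action of $G$ on $(M,m)$ gives a Koopman, hence isometric, representation $\pi$ of $G$ on the Hilbert space $E=L_2(M,m)$, which is uniformly convex; ergodicity forces $E^{\pi}=\CC\cdot\mathbbm{1}$, so the canonical $\pi$-complement is $E_{\pi}=\{\,f\in L_2(M,m)\ \colon\ \int_M f\,dm=0\,\}$ and $\mathcal{P}_{\pi}$ is the rank one orthogonal projection $f\mapsto\int_M f\,dm$. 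Fix a finite generating set $S$ of $G$ used to warp the cone. The spectral gap hypothesis together with Theorem \ref{theorem in intro: spectral gaps and projections}, applied to the single representation $\pi$ (in the form of its third clause, on summable convergence of the iterated Markov operators; as $G$ is finitely generated the measure produced there is finitely supported), yields a finitely supported probability measure $\mu$ on $G$ and a summable sequence $a_k\to 0$ with $\big\lVert(A_{\pi}^{\mu})^k-\mathcal{P}_{\pi}\big\rVert\le a_k$ for all $k$, the limit being identified with $\mathcal{P}_{\pi}$ via formula \eqref{formula:Neumannser} (Theorem \ref{theorem: definition of the projection onto invariant vectors}). Set $D=\max_{g\in\supp\mu}|g|_S<\infty$.

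\textbf{Step 2 (transporting to the warped cone and taking the limit).} As set up in Section \ref{sect:warped}, the warped cone $\mathcal{O}=\mathcal{O}_G(M)$ has underlying set $M\times[1,\infty)$, the group acts by $g\cdot(x,t)=(gx,t)$, and $L_2(\mathcal{O})$ decomposes as a direct integral of copies of $L_2(M,m)$ over the height parameter $t$. Put $T_k:=\int^{\oplus}(A_{\pi}^{\mu})^k\,dt$ and $\mathfrak{G}:=\int^{\oplus}\mathcal{P}_{\pi}\,dt$ in $\mathcal{B}(L_2(\mathcal{O}))$. Since $A_{\pi}^{\mu}$ moves $L_2$-mass from $(x,t)$ to $(gx,t)$ only for $g\in\supp\mu$, and by construction of the warped metric $\dist_{\mathcal{O}}\big((x,t),(gx,t)\big)\le|g|_S$ \emph{for every $t$}, the operator $(A_{\pi}^{\mu})^k$ has propagation at most $Dk$ in each slice; as it also preserves slices, $T_k$ has propagation at most $Dk<\infty$ on $\mathcal{O}$, so each $T_k$ is a finite propagation operator. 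Moreover $\lVert T_k-\mathfrak{G}\rVert$ equals the supremum over the slices of $\big\lVert(A_{\pi}^{\mu})^k-\mathcal{P}_{\pi}\big\rVert$, hence is $\le a_k\to 0$. Therefore $\mathfrak{G}=\lim_k T_k$ is a limit of finite propagation operators, and it is an orthogonal projection because $\mathcal{P}_{\pi}$ is.

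\textbf{Step 3 ($\mathfrak{G}$ is a non-compact ghost).} Non-compactness is immediate: $\mathfrak{G}$ is an orthogonal projection whose range contains a copy of the constants in each of the infinitely many slices, so it has infinite rank. For the ghost property, observe first that a bounded subset of $\mathcal{O}$ lies in $M\times[1,T]$ for some finite $T$ (each slice has diameter at most $t\,\diam(M)$), so it suffices to show that the matrix coefficients of $\mathfrak{G}$ between cells of bounded diameter tend to $0$ as their height goes to infinity. Pass to a bounded geometry net of $\mathcal{O}$ and its Voronoi cells $U_{(x,t)}$, of warped radius $\le 1$, as in \cite{willett-yu} (here compactness of $M$ and the Lipschitz hypothesis enter, to ensure bounded geometry). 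Such a cell is contained in the union of at most $|B_S(1)|$ metric balls of $\dist_M$-radius $O(1/t)$ about the points $gx$ with $|g|_S\le 1$, whence by upper uniformity of $m$ (Definition \ref{definition : upper uniform measure}) one gets $m\big(U_{(x,t)}\big)\le C\,t^{-\alpha}$ for suitable constants $C,\alpha>0$. Since $\mathfrak{G}$ acts on the $t$-slice as $\mathcal{P}_{\pi}$, its coefficient between two net cells in the same slice is $m(U_{(x,t)})^{1/2}\,m(U_{(y,t)})^{1/2}\le C\,t^{-\alpha}$, and it vanishes between cells in different slices; so all matrix coefficients of $\mathfrak{G}$ tend to $0$ as the cells leave every bounded subset of $\mathcal{O}$, which is exactly the ghost condition, as in the expander situation of \cite{higson-lafforgue-skandalis}.

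\textbf{Main obstacle.} The genuinely delicate part is the geometric bookkeeping underlying Steps 2 and 3: identifying the analytic averaging operators $A_{\pi}^{\mu}$ with honestly finite propagation operators on $\mathcal{O}$ whose propagation is uniform in the height, and quantifying how $m$ is distributed inside each slice. Both rest entirely on the compatibility between the warped-cone metric and the measure $m$, which is precisely what the upper uniformity of $m$ (together with the Lipschitz assumption entering the definition of $\mathcal{O}$) is designed to provide; once these identifications are in place, the analytic input — uniform norm convergence of the iterated Markov operators to the rank one projection $\mathcal{P}_{\pi}$ — is supplied verbatim by Theorem \ref{theorem in intro: spectral gaps and projections}.
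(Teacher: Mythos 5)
Your Steps 1--2 are exactly the paper's construction: $\mathfrak{G}$ is the slice-wise projection onto constants (the paper writes it as $\mathfrak{G}f(x,t)=\int_{M\times\{t\}}f(y,t)\,dm(y)$, i.e.\ your direct integral of $\mathcal{P}_\pi$), the spectral gap plus Theorem \ref{theorem: main theorem in text spectral gaps and projections}/\ref{theorem: definition of the projection onto invariant vectors} gives $\lVert(A_\pi^\mu)^k-\mathcal{P}_\pi\rVert\le\lambda^k$ uniformly in the height, and finite propagation of $(A_\pi^\mu)^k$ on $\mathcal{O}$ is the paper's Proposition \ref{proposition : pi has finite propagation} (propagation $\le Dk$ since $\dist_{\mathcal O}(x,gx)\le|g|$). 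Non-compactness is likewise identical. So the analytic half of your argument matches the paper.

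Your Step 3, however, has genuine problems as written. First, upper uniformity (Definition \ref{definition : upper uniform measure}) is purely qualitative: it gives $\sup_x m(B(x,R))\to0$ as $R\to0$ with no rate, so the claimed bound $m(U_{(x,t)})\le C\,t^{-\alpha}$ is unjustified; fortunately only the qualitative statement is needed. Second, the net/matrix-coefficient reformulation is shaky: Voronoi cells of a net in $\mathcal{O}$ are two-dimensional regions with positive extent in the $t$-direction, not single slices, so the assertion that coefficients of $\mathfrak{G}$ ``vanish between cells in different slices'' is not correct -- $\mathfrak{G}$ acts slice-wise and couples any two cells whose height intervals overlap. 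Moreover, decay of coefficients against a radius-$1$ net is not literally the ghost condition used in the paper, which requires controlling $\lVert\mathfrak{G}\psi\rVert$ for unit vectors supported in $R$-balls for \emph{every} $R$; to pass from cells to $R$-balls you must decompose $\psi$ over the (boundedly many) cells meeting the ball and estimate each piece, and that estimate is exactly the computation you have skipped: for $\psi$ supported in a set $A$, Jensen/Cauchy--Schwarz gives $\lVert\mathfrak{G}\psi\rVert^2\le\sup_s m(A_s)\,\lVert\psi\rVert^2$ where $A_s=A\cap(M\times\{s\})$. The paper does this directly, combining Lemma \ref{lemma:RT} (warped $R$-balls at height $t$ are covered by boundedly many cone balls, hence have slice-wise $d$-radius $O(1/t)$ up to Lipschitz constants) with Lemma \ref{lemma: balls in the warped cone have small measure} and the Cauchy--Schwarz bound above, yielding $\lVert\mathfrak{G}\psi\rVert\le\nu(B_{\mathcal O}(p,R))\le\delta$ for $p$ high enough. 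Your ingredients (the Lemma \ref{lemma:RT}-type inclusion and upper uniformity) are the right ones, but the ghost verification should be run on ball-supported functions for arbitrary $R$ with the slice-wise measure estimate, not on a fixed-scale net with coefficients declared to vanish off single slices.
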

We also conjecture that such warped cones with non-compact ghost projections as provided by Theorem \ref{theorem: intro warped cones} are counterexamples to the 
coarse Baum-Connes conjecture. Warped cones in many ways exhibit a behavior similar to that of box spaces, however one expects that the 
fact that the class of warped cones is so rich might lead to examples with interesting new features.

\subsection*{Acknowledgments}
The second author would like to thank the Mathematical Institute at the University of Oxford for its hospitality during a 4-month stay 
which made this work possible. Both authors thank Mikael de la Salle, Adam Skalski, Alain Valette, Rufus Willett and Guoliang Yu for valuable comments.
We are grateful to the referee for numerous suggestions that allowed to improve the presentation significantly.

\tableofcontents

\section{Preliminaries}

\subsection{Uniform convexity}
Let $(E,\Vert \cdot \Vert_E)$ be a reflexive Banach space, and let ${\mathcal{B}}(E)$ be the algebra of bounded linear operators on $E$.
The {\emph{modulus of convexity of}} $E$ is the function $\delta_E:[0,2]\to [0,1]$ defined by
$$\delta_E(t)=\inf \left\{ 1-\left\Vert \dfrac{v+w}{2}\right\Vert \; ;\; \Vert v\Vert=\Vert w\Vert=1, \Vert v-w\Vert\ge t\right\}.$$
The Banach space $E$ is said to be {\emph{uniformly convex}} if $\delta_E(t)>0$ for every $t>0$.

A family $\mathcal{E}=\set{E_i}_{i\in I}$ of Banach spaces is \emph{uniformly convex} if the function $\delta_{\mathcal{E}}(t)=\inf_{i\in I} \delta_{E_i}(t)$, called the {\emph{modulus of convexity of the family}} $\mathcal{E}$, satisfies $\delta_{\mathcal{E}}(t) >0$ for every $t>0$.

\subsection{Admissible measures }\label{subsection: measures}

Compactly supported pro\-ba\-bility measures on topological groups and the corresponding random walks are central objects in our arguments. We introduce some notation and several standing assumptions on such measures. 

Consider $G$ a locally compact group, endowed with a (left invariant) Haar measure $\eta$. 
For any function $f:G\to {\mathbb{C}}$ we denote $\gamma\cdot f(g)=f(\gamma^{-1} g)$, $\gamma,g\in G$.  

We consider two particular cases, before introducing the notion of admissible measure in full generality. Let $Q$ be a compact subset of $G$.

\subsubsection*{Continuous admissible measures}
Let $\alpha,\beta:G\to [0,\infty)$ be continuous functions with compact support satisfying 
$$\int \alpha\,d\eta=1 \ \ \ \ \ \text{ and } \ \ \ \ \ \ \beta(g)\ge s\cdot \alpha(g), \ \ \ \ \ \forall s\in  Q, g\in G\, .$$
We also assume that $\alpha(e)>0$.
Another continuous function, whose compact support contains $ Q$, can then be defined by the formula
\begin{equation}\label{equation: alpha-beta-decomposition of rho}
\rho=\dfrac{\alpha+\beta}{M(\alpha,\beta)}, \ \ \ \ \ \text{ where } \ \ \ \ \ \ M(\alpha,\beta)=\int_G (\alpha+\beta) \,d\eta.
\end{equation}
We call  a decomposition as in \eqref{equation: alpha-beta-decomposition of rho} an \emph{$(\alpha,\beta)$-decomposition of $\rho$}.

The function $\rho$ gives rise to a probability measure $\mu$ on $G$ defined by setting
$d\mu = \rho\, d \eta$.

\subsubsection*{Discrete admissible measures} Now consider functions with finite support $\alpha, \beta:G\to [0,\infty)$. 
With the above conditions formulated for such $\alpha$ and $\beta$, we define $\rho$ as in \eqref{equation: alpha-beta-decomposition of rho},
where $M(\alpha,\beta)=\sum_{g\in \supp \alpha\cup \supp \beta} \left[ \alpha(g)+\beta(g)\right],$ and the formula \eqref{equation: alpha-beta-decomposition of rho}  is again called an \emph{$(\alpha,\beta)$-decomposition of $\rho$.}

As $\rho$ has finite support and $\sum_{g\in\supp\rho} \rho(g)=1$, it gives rise to a purely atomic probability measure on $G$.

\begin{definition}\label{def:admiss}
A measure $\mu_c$ (respectively $\mu_d$) will be called a \emph{continuous admissible measure with respect to $ Q$} 
(respectively,  \emph{discrete admissible measure with respect to $ Q$}) 
if it is defined by a continuous density $\rho$ (respectively, a finitely supported function $\rho$) admitting an $(\alpha,\beta)$-decomposition.

A probability measure $\mu$ on $G$ will be called \emph{admissible with respect to $ Q$} if there exists $t\in[0,1]$ such that $\mu= t\mu_c+(1-t)\mu_d$, where 
$\mu_c$ and $\mu_d$ are, respectively, continuous and discrete admissible measures with respect to $Q$.

The {\emph{normalizing factor}} of the function $\rho$ and its associated continuous or discrete measure  $\mu$ is the infimum of $M(\alpha,\beta)$, taken
over all $(\alpha,\beta)$-decompositions of $\rho$, with $Q$ fixed. This factor will be  denoted either $M_\rho$ or $M_\mu$, depending on the object referred to. 

The {\emph{normalizing factor}} of an admissible measure  $\mu=t\mu_c+(1-t)\mu_d$ is the number 
$$M_\mu=\left(\dfrac{t}{M_{\mu_c}}+ \dfrac{1-t}{M_{\mu_d}}\right)^{-1}.$$
\end{definition}

Admissible measures always exist on a locally compact group. We expect that the arguments presented in this paper would, 
with some modifications, work for a larger class of measures. 
However, the above setting allows to identify a continuous admissible measure $\mu$ naturally, {\textit{via}} the density $\rho$, with an element of the group algebra 
$C_c(G)$ (respectively the group ring $\CC G$, in the discrete 
case), which is crucial for further applications.

The set $ Q$ will usually be a Kazhdan set (see Definition \ref{definition: spectral gaps}). Since such sets can be finite even for Lie groups (see Section \ref{sec:Kazhdansets}), it 
is useful to work with measures having an atomic part even in the Lie group setting. When $\mu$ is continuous, hence entirely defined by a density $\rho$ with respect to the Haar measure $\eta$, we sporadically replace $\mu$ by $\rho$ in the whole notation and terminology.

\medskip

The following was pointed out to us by the referee.

\begin{remark}\label{rmk:referee}\normalfont
The definition of admissible measures can be motivated by considering the case of Hilbert spaces and unitary representations of a finitely generated group $G$. 
Note that, given an element $u\in \CC G$ defined as the uniform probability measure on a finite symmetric generating set $S$, as in \cite{harpe-robertson-valette,valette}, 
the associated random walk
does not necessarily converge to the Kazhdan projection even if 1 is an isolated point in the spectrum of $u$.
However, it does converge if $-1$ does not belong to the spectrum of $u$.
The latter condition is equivalent to the property that the Cayley graph of $G$ with respect to $S$ is not bi-colorable \cite[Proposition II]{harpe-robertson-valette}.
For instance, if $G$ has property $(T)$, the Cayley graph of $G\times \ZZ/2\ZZ$ is bi-colorable, while the Cayley graph of $G$ itself is not bi-colorable, provided that $G$ does not have non-trivial finite quotients.
The spectrum of a Markov operator associated to an admissible measure automatically does not contain $-1$, which in the Hilbert space case 
explains from the point of view of spectral theory the convergence of the 
associated random walk to the Kazhdan projection and provides additional motivation for using admissible measures.
\end{remark}

\subsection{Groups and representations}\label{subsection: groups and representations}
Let $G$ be a locally compact group.
An isometric representation $\pi :G\to \mathcal{B}(E)$ of $G$ on a Banach space $E$ is said to be {\emph{continuous}} if it is continuous with respect to 
the strong operator topology. Equivalently, every orbit map is continuous, see \cite[Lemma 2.4]{bfgm}.
Throughout the article we restrict our attention to representations that are continuous in the above sense, without mentioning this further.


Consider the subspace of $E$ consisting of vectors invariant under $\pi$,
 $$E^\pi=\setd{ v\in E}{\pi_g v=v \text{ for every } g \in G}.$$

The dual space $E^*$ is naturally equipped with a contragradient  representation $\opi:G\to \mathcal{B}(E^*)$, defined by the formula
 $\opi_g = \pi_{g^{-1}}^*$.
Note that $\opi$ is isometric if $\pi$ is, but not necessarily continuous.
If $E$ is reflexive we define a subspace
$E_\pi=\operatorname{Ann}\left((E^*)^{\opi}\right)$, 
where $\operatorname{Ann}$ denotes the annihilator: the set of all functionals in $E=E^{**}$ that vanish identically on $(E^*)^{\opi}$.
Both $E^\pi$ and $E_\pi$ are $\pi$--invariant closed subspaces of $E$.
 
\begin{definition}\label{def:compl}
A representation $\pi :G \to {\mathcal{B}}(E)$ is {\emph{complemented}} if 
\begin{equation}\label{equation: decomposition into invariant vectors + complement}
E=E^\pi \oplus E_\pi.
\end{equation}
A family of complemented representations is called a {\emph{complemented family}}.
\end{definition}

Examples of complemented representations include
 isometric representations on reflexive Banach spaces \cite{bader-rosendal-sauer} (in particular, on uniformly convex Banach spaces \cite[Section 2.c]{bfgm}),
and representations of small exponential growth of certain Lie groups on Banach spaces with non-trivial Rademacher type \cite{lafforgue-duke}.

A representation $\pi:G\to {\mathcal{B}}(E)$ is \emph{uniformly bounded}  if
$\Vert \pi \Vert=\sup_{g\in G}\Vert \pi_g\Vert_{{\mathcal{B}} (E)}<\infty$.
For any such representation $\pi$, a new norm can be defined on $E$, equivalent to the initial one, by the formula
\begin{equation}
\label{equation: invariant norm for ub reps}
\Vert v\Vert_\pi=\sup_{g\in G } \Vert \pi_g v\Vert.
\end{equation}
As observed in \cite[Proposition 2.3]{bfgm}, the modulus of convexity of the norm $\Vert \cdot \Vert_\pi$
satisfies $\delta_{\Vert\cdot\Vert_\pi}(t)\ge \delta_{\Vert \cdot\Vert}\left(t\Vert \pi\Vert^{-1} \right)$ for every  $t>0$.
\subsection{Spectral gaps and uniform property $(TE)$}

Throughout the section, $G$ is a locally compact group and $E$ a Banach space.

A representation $\pi$ of $G$ on $E$ has \emph{almost invariant vectors} if for every $\varepsilon>0$ and every compact subset $S$ in $G$ there exists $v\in E$, $\Vert v\Vert=1$,
such that
$\sup_{s\in S} \Vert v-\pi_s v \Vert \le \varepsilon$.

\begin{definition}\label{definition: spectral gaps}
\begin{enumerate}
\item A complemented representation $\pi: G \to \mathcal{B}(E)$ has a \emph{spectral gap} 
if the restriction of $\pi$ to $E_\pi$ does not have almost invariant vectors, i.e. if there exists a constant $\kappa >0$ and a compact subset $Q$ in $G$ such that for every $v\in E_{\mathcal{\pi}}$
$$\sup_{s\in Q} \Vert v-\pi_sv\Vert\ge c\Vert v\Vert \, .$$
Any such pair $(Q, \kappa )$ is called a \emph{Kazhdan pair for }$\pi$, $\kappa$ is called \emph{a Kazhdan constant}, and $Q$ \emph{a Kazhdan set}.\\
\item 
We say that a complemented family $\mathcal{F}$ of representations on a family of Banach spaces \emph{has a  spectral gap} if there exist  $Q$ and $\kappa>0$ as above such that $(Q,\kappa)$ is a Kazhdan pair for every $\pi\in\mathcal{F}$. 
 We call $(Q, \kappa )$  a \emph{Kazhdan pair for }$\mathcal F$. 
\end{enumerate}
\end{definition}

If $\mathcal{F}$ is a family of representations closed under direct sums then the existence of a spectral gap for each $\pi$ in $\mathcal{F}$ is equivalent to the existence of a spectral gap for the entire family.

In the particular case when $\mathcal{F}$ is composed of all the unitary representations of a group $G$, the Kazhdan constant in the sense of the above definition 
is the classical Kazhdan constant associated to a Kazhdan set, see \cite{bhv}. 
In that case, every generating set of $G$ is a Kazhdan set, but the converse is true only for sets with non-empty interior. See Section \ref{sec:Kazhdansets}.

The following definition introduces versions of Kazhdan property $(T)$ for Banach spaces.
\begin{definition}\label{TEU}
Let $\mathcal{E}$ be a family of Banach spaces and let $E\in\mathcal{E}$.
\begin{enumerate}
\item $G$ \emph{has property $(TE)$} if each isometric representation $\pi$ of $G$ on $E$ has a spectral gap \cite{bfgm}. More generally, 
$G$ has \emph{property $(T\mathcal{E})$} if 
every isometric representation of $G$ on any $E\in \mathcal{E}$ has a spectral gap.
\item $G$ has \emph{property $(TE)$ uniformly}
if  the family of all isometric representations  of $G$ on $E$ has a spectral gap. More generally, $G$ has \emph{property $(T\mathcal{E})$ uniformly} 
if the family of all isometric representations of $G$ on all Banach spaces 
$E\in \mathcal{E}$ has a spectral gap.
\end{enumerate}
\end{definition}

\begin{remarks}\label{rmks:kazhdanlp}\normalfont
\begin{enumerate}
\item\label{lp1} It was proved in \cite{bfgm} that if $G$ has property $(T)$ then it has property $(TE)$ for  $E=L_p(X,\nu)$, $1\leq p<\infty$. Moreover, there exists a Kazhdan pair $(Q, \kappa )$ common to all isometric representations on $L_p(X,\nu)$, with $1\leq p \leq 2$ and such that for $p>2$, $\left( Q, \frac{2}{p}\kappa \right)$ is a Kazhdan pair for isometric representations on $L_p(X,\nu)$. The former follows from standard embedding results \cite{wellswilliams}, while the latter can be deduced from estimates of the Mazur map \cite[page 198]{bl}.

\medskip

\item\label{lp2} In \cite{bekka-olivier}  property $(T\ell_p)$ was studied systematically. It yields a larger class than that of groups with property $(T)$, 
containing for instance irreducible lattices in products of locally compact second countable groups, one with property $(T)$ 
and the other with no non-trivial finite dimensional unitary representation. 
\end{enumerate}
\end{remarks}  


\section{Random walks, projections and spectral gaps}

\subsection{The Markov operator $A_\pi^\mu$ and its properties}\label{subsec:averageop}

Let $\pi :G \to {\mathcal{B}}(E)$ be a uniformly bounded representation of $G$ on a Banach space $E$, and let $\mu$ be a probability measure 
on $G$. The operator $A^\mu_\pi :E \to E$, defined by the Bochner integral
$$A^\mu_\pi v = \int_G \pi_g v  \, d\mu (g)$$
 is called the {\emph{Markov operator}} associated to the random walk on $G$ determined by $\mu$. 
By standard properties of the Bochner integral, we have that $ A_\pi^\mu$ is a bounded operator as well.

The operator $A^\mu_\pi$ can also be defined for bounded representations that are not uniformly bounded, provided that the support of $\mu $ is compact.

\subsubsection*{General properties of $A_\pi^\mu$}
In the following lemma we collect several standard  properties that we will need later, of the operator $A_\pi^\mu$ for an isometric representation $\pi$. Denote by $\overline{\mu }$ the measure obtained from $\mu$ by pre-composing with the map $s\mapsto s^{-1}$ in $G$ and post-composing with the conjugation in $\mathbb C$.

\begin{lemma}\label{lemma: A_pi preserves the decomposition into E^pi and E_pi}
Let $\mu$ be a probability measure on $G$. Then\begin{enumerate}
\item $\left(A_\pi^\mu\right)^*=A_{\opi}^{\overline{\mu}}$\,,\label{lemma claim: conjugate of A}
\item $A_\pi^{\mu*\nu}=A_\pi^\mu A_\pi^\nu$,
\item $\pi_g A_\pi^\mu =A_\pi^{g\cdot\mu} $  for every $g\in G$,\label{lemma claim: acting by g on A}
\item $A_\pi^\mu=I$ on $E^\pi$, \label{lemma claim: A_pi = I on E^pi}
\item $A_\pi^\mu (E_\pi)\subseteq E_\pi$.
\end{enumerate}
\end{lemma}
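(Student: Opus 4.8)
The plan is to verify the five assertions one by one, each reducing to a manipulation of the Bochner integral defining $A_\pi^\mu$ together with standard facts about the contragredient representation and the annihilator construction.

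For \eqref{lemma claim: conjugate of A}, I would compute $\langle A_\pi^\mu v, \phi\rangle$ for $v\in E$, $\phi\in E^*$ by pulling the duality pairing inside the Bochner integral (justified by boundedness of $\phi$), obtaining $\int_G \langle \pi_g v,\phi\rangle\,d\mu(g) = \int_G \langle v,\pi_g^*\phi\rangle\,d\mu(g)$; after the substitution $g\mapsto g^{-1}$ (and conjugation, to match the definition of $\overline\mu$ over $\mathbb C$) this is $\langle v, A_{\opi}^{\overline\mu}\phi\rangle$, which is the claim. For (ii), I would expand $A_\pi^{\mu*\nu}v = \int_G \pi_g v\, d(\mu*\nu)(g)$ using the definition of convolution as pushforward of $\mu\times\nu$ under multiplication, apply Fubini for Bochner integrals, and use $\pi_{gh}=\pi_g\pi_h$ to factor the double integral as $A_\pi^\mu A_\pi^\nu v$. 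For \eqref{lemma claim: acting by g on A}, I would move $\pi_g$ inside the integral (it is bounded and linear), write $\pi_g\pi_h = \pi_{gh}$, and recognize the resulting integral $\int_G \pi_{gh}v\,d\mu(h)$ as integration against the pushforward $g\cdot\mu$, i.e. $A_\pi^{g\cdot\mu}v$. Claim \eqref{lemma claim: A_pi = I on E^pi} is immediate: if $v\in E^\pi$ then $\pi_g v = v$ for all $g$, so $A_\pi^\mu v = \int_G v\,d\mu(g) = v$ since $\mu$ is a probability measure.

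The one point requiring slightly more care — and the place I would expect to spend the most thought — is \eqref{lemma claim: A_pi preserves the decomposition into E^pi and E_pi}, i.e. $A_\pi^\mu(E_\pi)\subseteq E_\pi$. Since $E_\pi = \operatorname{Ann}((E^*)^{\opi})$, it suffices to show that for $v\in E_\pi$ and $\phi\in (E^*)^{\opi}$ one has $\langle A_\pi^\mu v,\phi\rangle = 0$. Using \eqref{lemma claim: conjugate of A}, $\langle A_\pi^\mu v,\phi\rangle = \langle v, A_{\opi}^{\overline\mu}\phi\rangle$, so I need $A_{\opi}^{\overline\mu}\phi \in (E^*)^{\opi}$ whenever $\phi$ already lies there; but then by the analogue of \eqref{lemma claim: A_pi = I on E^pi} applied to the representation $\opi$ we simply get $A_{\opi}^{\overline\mu}\phi = \phi \in (E^*)^{\opi}$, and since $v\in E_\pi$ annihilates $(E^*)^{\opi}$ the pairing vanishes. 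The mild subtlety is that $\opi$ need not be strongly continuous, so I would either note that the Bochner integral $A_{\opi}^{\overline\mu}$ still makes sense (the support of $\mu$ is effectively where the argument is applied, or one uses the weak-$*$ integral), or sidestep it entirely by defining $A_{\opi}^{\overline\mu}$ as $(A_\pi^\mu)^*$ and observing directly that $(A_\pi^\mu)^*$ fixes $(E^*)^{\opi}$ pointwise, which is all that is used. This packaging makes the argument clean and avoids any continuity worry.
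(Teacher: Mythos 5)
The paper states this lemma without proof, listing the five assertions as ``standard properties,'' so there is no argument of the authors' to compare against; your verification is correct and is exactly the standard one. In particular, your handling of the only delicate point, clause (v), by observing that $(A_\pi^\mu)^*$ fixes $(E^*)^{\opi}$ pointwise (rather than insisting on a Bochner integral for the possibly non-continuous $\opi$) is sound and resolves the continuity issue cleanly.
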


\subsubsection*{Properties of $A_\pi^\mu$ with respect to a lattice}

Consider now a locally compact group $G$ with a finitely generated lattice $\Gamma$, i.e. a finitely generated subgroup $\Gamma$ such that 
$G/\Gamma$ has a finite $G$--invariant measure induced by the Haar measure.

Let $\pi$ be a continuous isometric representation of $G$ on a reflexive Banach space $E$. Denote by $\pi\vert \Gamma$ the restriction of $\pi$ to the lattice $\Gamma$.
The inclusion of $\Gamma$ into $G$ gives rise to two decompositions,
\begin{equation}\label{equation: inclusions of subspaces of E for a lattice} 
E=E^\pi\oplus E_\pi=E^{\pi\vert \Gamma}\oplus E_{\pi\vert\Gamma}.
\end{equation}
Since $\overline{\pi}\vert\Gamma=\overline{\pi\vert\Gamma}$, the subspaces above satisfy $E^\pi\subseteq E^{\pi\vert\Gamma}$ 
and $E_{\pi\vert\Gamma}\subseteq E_\pi$.

\begin{lemma}\label{lemma: decomposition E_G into E_Gamma and complement}
There is a direct sum decomposition
$E_\pi=E_{\pi\vert \Gamma}\oplus \left( E_\pi\cap E^{\pi\vert\Gamma} \right)$.
\end{lemma}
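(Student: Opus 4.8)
The plan is to extract everything from the two ambient splittings recorded just before the statement, namely $E = E^\pi\oplus E_\pi = E^{\pi\vert\Gamma}\oplus E_{\pi\vert\Gamma}$, together with the inclusions $E^\pi\subseteq E^{\pi\vert\Gamma}$ and $E_{\pi\vert\Gamma}\subseteq E_\pi$. Let $P\colon E\to E$ be the bounded projection onto $E_{\pi\vert\Gamma}$ along $E^{\pi\vert\Gamma}$ afforded by the second decomposition. First I would check that $P$ preserves $E_\pi$: for $v\in E_\pi$ one has $Pv\in E_{\pi\vert\Gamma}\subseteq E_\pi$, hence also $(I-P)v=v-Pv\in E_\pi$ since $E_\pi$ is a linear subspace; on the other hand $(I-P)v\in E^{\pi\vert\Gamma}$ by construction of $P$, so $(I-P)v\in E_\pi\cap E^{\pi\vert\Gamma}$. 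Thus $Q:=P|_{E_\pi}$ is a bounded idempotent on $E_\pi$ whose image is $E_{\pi\vert\Gamma}$ (here we use $E_{\pi\vert\Gamma}\subseteq E_\pi$) and whose kernel is exactly $E_\pi\cap E^{\pi\vert\Gamma}$. Since an idempotent splits its space topologically into image and kernel, this yields $E_\pi=E_{\pi\vert\Gamma}\oplus\bigl(E_\pi\cap E^{\pi\vert\Gamma}\bigr)$.

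Alternatively, and more transparently, one can argue directly: given $v\in E_\pi$, write $v=v_0+v_1$ with $v_0\in E^{\pi\vert\Gamma}$ and $v_1\in E_{\pi\vert\Gamma}$; then $v_1\in E_\pi$, hence $v_0=v-v_1\in E_\pi$, so $v_0\in E_\pi\cap E^{\pi\vert\Gamma}$. This proves $E_\pi\subseteq E_{\pi\vert\Gamma}+\bigl(E_\pi\cap E^{\pi\vert\Gamma}\bigr)$, and the reverse inclusion is immediate from $E_{\pi\vert\Gamma}\subseteq E_\pi$ and $E_\pi\cap E^{\pi\vert\Gamma}\subseteq E_\pi$. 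For directness of the sum, $E_{\pi\vert\Gamma}\cap\bigl(E_\pi\cap E^{\pi\vert\Gamma}\bigr)\subseteq E_{\pi\vert\Gamma}\cap E^{\pi\vert\Gamma}=\{0\}$ by the second ambient decomposition.

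The only mild point worth spelling out is that ``direct sum decomposition'' is meant topologically: both summands should be closed and the associated projections bounded. Closedness of $E_{\pi\vert\Gamma}$ is part of the hypotheses, and $E_\pi\cap E^{\pi\vert\Gamma}$ is closed as an intersection of closed subspaces; boundedness of the two projections of $E_\pi$ onto these summands is inherited from the bounded operators $P$ and $I-P$ restricted to the closed subspace $E_\pi$. I do not expect any genuine obstacle here — the content is purely the linear-algebraic bookkeeping of intersecting the two given splittings, with a one-line check that the relevant projection is bounded.
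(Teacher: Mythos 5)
Your proposal is correct, and your second, ``direct'' argument is exactly the paper's proof: decompose $v\in E_\pi$ as $v=w+z$ along $E=E^{\pi\vert\Gamma}\oplus E_{\pi\vert\Gamma}$ and use $E_{\pi\vert\Gamma}\subseteq E_\pi$ to conclude $z\in E_\pi\cap E^{\pi\vert\Gamma}$. The projection-operator formulation and the remarks on closedness and boundedness are just a more explicit packaging of the same argument, so there is nothing to add.
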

\begin{proof}
Let $v\in E_\pi$. Then $v=w+z$, where $w\in E_{\pi\vert\Gamma}$ and $z\in E^{\pi\vert\Gamma}$. Thus 
$z=v-w\in E_{\pi}$ by \eqref{equation: inclusions of subspaces of E for a lattice}.
\end{proof}
The above decomposition is preserved by $\pi\vert\Gamma$ but in general not preserved by $\pi$.

Now choose a fundamental domain $\Delta$ for $\Gamma$ in $G$ and renormalize the Haar measure $\eta$ on $G$ so that $\eta(\Delta)=1$.
For the purposes of the next statement denote by $A_\pi^\Delta$ the Markov operator associated to the measure determined by the  (possibly discontinuous) characteristic function of $\Delta$ as a density function.
The next proposition shows that the restriction of $A_\pi^\Delta$ to $E_\pi$ is concentrated on $E_{\pi\vert\Gamma}$.

\begin{proposition}\label{proposition: norm of Markov operator concentrated on the lattice}
Let $v\in E_\pi\cap E^{\pi\vert \Gamma}$. Then 
$A_\pi^\Delta v=0$.

In particular, given $v\in E_\pi$ we have $A_\pi^\Delta v=A_\pi^\Delta w$, where $w\in E_{\pi\vert\Gamma}$ is as in the previous lemma.
\end{proposition}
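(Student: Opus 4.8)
The plan is to prove that $A_\pi^\Delta v$ lies simultaneously in $E^\pi$ and in $E_\pi$; since $\pi$ is an isometric representation on a reflexive space it is complemented (so $E=E^\pi\oplus E_\pi$, and in particular $E^\pi\cap E_\pi=\{0\}$, see the discussion after Definition \ref{def:compl}), this forces $A_\pi^\Delta v=0$.

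That $A_\pi^\Delta v\in E_\pi$ is immediate: since $\eta(\Delta)=1$, the characteristic function $\mathbbm 1_\Delta$ is the density with respect to $\eta$ of a probability measure $\mu_\Delta$, so $A_\pi^\Delta=A_\pi^{\mu_\Delta}$, and because $v\in E_\pi$ part (5) of Lemma \ref{lemma: A_pi preserves the decomposition into E^pi and E_pi} applies. To obtain $A_\pi^\Delta v\in E^\pi$ I would verify that $\pi_g A_\pi^\Delta v=A_\pi^\Delta v$ for every $g\in G$. Moving $\pi_g$ inside the Bochner integral and using left invariance of $\eta$ gives $\pi_g A_\pi^\Delta v=\int_{g\Delta}\pi_h v\,d\eta(h)$, so it suffices to show that $\int_{\Delta'}\pi_h v\,d\eta(h)$ does not depend on the choice of fundamental domain $\Delta'$ for $\Gamma$ in $G$. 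This is the only place where the hypothesis $v\in E^{\pi\vert\Gamma}$ is used: the orbit map $F\colon h\mapsto \pi_h v$ is continuous, bounded by $\Vert v\Vert$ (as $\pi$ is isometric) and right $\Gamma$-invariant, since $F(h\gamma)=\pi_h\pi_\gamma v=\pi_h v$; hence it descends to a Bochner-integrable map $\bar F$ on the finite-measure space $G/\Gamma$. Both $\Delta$ and its left translate $g\Delta$ are Borel fundamental domains for the (right) action of $\Gamma$ on $G$, and the Weil integration formula identifies each of $\int_{\Delta}F\,d\eta$ and $\int_{g\Delta}F\,d\eta$ with $\int_{G/\Gamma}\bar F$; these are therefore equal. (The Weil formula is available precisely because the $G$-invariant measure on $G/\Gamma$ is assumed to exist; recall also that a group admitting a lattice is automatically unimodular.) This yields $\pi_g A_\pi^\Delta v=A_\pi^\Delta v$, i.e. $A_\pi^\Delta v\in E^\pi$, and hence $A_\pi^\Delta v=0$.

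For the final assertion, given $v\in E_\pi$ write $v=w+z$ with $w\in E_{\pi\vert\Gamma}$ and $z\in E_\pi\cap E^{\pi\vert\Gamma}$, as provided by Lemma \ref{lemma: decomposition E_G into E_Gamma and complement}; then $A_\pi^\Delta v=A_\pi^\Delta w+A_\pi^\Delta z=A_\pi^\Delta w$, using the first part of the statement applied to $z$. The only genuine obstacle is the independence of the vector-valued integral on the choice of fundamental domain in the second step — everything else is formal manipulation with the splitting $E=E^\pi\oplus E_\pi$ — and there one must be slightly careful that the change of variables (equivalently, the Weil formula) is valid for Bochner integrals and holds only up to $\eta$-null sets.
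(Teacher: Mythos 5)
Your proof is correct and takes essentially the same route as the paper: show $A_\pi^\Delta v\in E^\pi\cap E_\pi=\{0\}$, where membership in $E^\pi$ comes from the right $\Gamma$-invariance of $h\mapsto\pi_h v$ together with unimodularity of $G$. The only cosmetic difference is that you package the independence of $\int\pi_h v\,d\eta(h)$ on the choice of fundamental domain as the Weil/unfolding formula, whereas the paper carries out the same cut-and-translate computation explicitly (decomposing $h\Delta=\bigcup_{\gamma\in\Gamma}\bigl(h\Delta\cap\Delta\gamma\bigr)$ and changing variables $g\mapsto g\gamma^{-1}$).
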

\begin{proof}
Observe that $A^\Delta_\pi v \in E^\pi$. Indeed, for any $h \in G$,
\begin{align*}
\pi_hA^\Delta_\pi v &= \int_\Delta  \pi_{hg}v\,d\eta(g)= \int_{h\Delta}  \pi_{g}v\,d\eta(g)\\
&= \sum_{\gamma \in \Gamma} \int_{h\Delta\cap \Delta \gamma }  \pi_{g}v\,d\eta(g)=\sum_{\gamma\in\Gamma} \int_{h\Delta\gamma^{-1}\cap \Delta} \pi_{g\gamma } v d\eta(g)
\end{align*}
where in the last equality the change of variable $g\mapsto g\gamma^{-1}$ and the unimodularity of $G$ (due to the existence of a lattice) were used.

The above and the fact that $v$ is fixed by $\Gamma$ imply that
$$
\pi_hA^\Delta_\pi v= \sum_{\gamma\in\Gamma} \int_{h\Delta\gamma^{-1}\cap \Delta} \pi_g v d\eta(g)=  \int_\Delta \pi_gv\,d\eta(g)= A^\Delta_\pi v\, .
$$
Since $A^\Delta_\pi v\in E_\pi\cap E^\pi$, the assertion follows.
\end{proof}

\subsection{Proof of Theorem \ref{theorem in intro: spectral gaps and projections}}

Our central result establishes a connection between Kazhdan constants, convergence of iterated Markov operators, and projections onto the subspace of invariant vectors.

\setcounter{aux1}{\value{section}}
\setcounter{aux2}{\value{theorem}}
\setcounter{section}{1}
\setcounter{theorem}{\value{mainthm}}

\begin{theorem}\label{theorem: main theorem in text spectral gaps and projections}
Let $G$ be a locally compact group, and $\mathcal{F}$ a family of isometric representations of $G$ on a  uniformly convex family $\mathcal{E}$ of Banach spaces. 
The following conditions are equivalent:
\begin{enumerate} \item the family $\mathcal{F}$ has a spectral gap;\label{item: main theorem condition spectral gap}
\item there exists a compactly supported probability measure $\mu$ on $G$ and $\lambda<1$  \label{item: main theorem condition norm of Markov}
such that for every isometric representation $\pi \in \mathcal{F}$ of $G$ on $E\in\mathcal{E}$ we have $\left\lVert A_\pi^\mu\vert_{E_\pi}\right\rVert<\lambda$;
\item there exists a compactly supported probability measure $\mu$ on $G$ and a number $\mathfrak{S}<\infty$ such that for every $\pi\in \mathcal{F}$ 
the iterated Markov operators $\left(A_\pi^\mu\right)^k$ 
converge with speed summable to at most $\mathfrak{S}$ to the projection $\mathcal{P}_\pi$ onto $E^\pi$ along $E_\pi$. \label{item: main theorem condition convergence}
\end{enumerate}
\end{theorem}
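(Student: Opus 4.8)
The plan is to prove the cycle of implications \eqref{item: main theorem condition spectral gap}$\,\Rightarrow\,$\eqref{item: main theorem condition norm of Markov}$\,\Rightarrow\,$\eqref{item: main theorem condition convergence}$\,\Rightarrow\,$\eqref{item: main theorem condition spectral gap}. Only the first implication genuinely uses uniform convexity; for the other two it suffices that $\mathcal{F}$ be a complemented family. The basic tools throughout are the elementary identities of Lemma \ref{lemma: A_pi preserves the decomposition into E^pi and E_pi}: that $A_\pi^\mu$ restricts to the identity on $E^\pi$ and maps $E_\pi$ into itself, that $A_\pi^{\mu\ast\nu}=A_\pi^\mu A_\pi^\nu$ (hence $A_\pi^{\mu^{\ast k}}=(A_\pi^\mu)^k$), and that $A_\pi^{g\cdot\mu}=\pi_g A_\pi^\mu$.

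For \eqref{item: main theorem condition spectral gap}$\,\Rightarrow\,$\eqref{item: main theorem condition norm of Markov}, fix a Kazhdan pair $(Q,\kappa)$ for $\mathcal{F}$ and let $\mu$ be any measure admissible with respect to $Q$, with an $(\alpha,\beta)$-decomposition $\rho=(\alpha+\beta)/M$; thus $\int\alpha\,d\eta=1$, $\beta\geq s\cdot\alpha$ for every $s\in Q$, and $\gamma_s:=\beta-s\cdot\alpha\geq 0$ has total mass $M-2\geq 0$. Let $\pi\in\mathcal{F}$ act on some $E\in\mathcal{E}$, let $v\in E_\pi$ with $\|v\|=1$, and set $w=A_\pi^\alpha v$. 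The key observation is that $w\in E_\pi$ — $\alpha\,d\eta$ is a probability measure and $A_\pi^\alpha$ preserves $E_\pi$ — so the spectral gap applies to $w$ itself and yields $s_0\in Q$ with $\|w-\pi_{s_0}w\|\geq\kappa\|w\|$. Splitting $\beta=s_0\cdot\alpha+\gamma_{s_0}$ and using $A_\pi^{s_0\cdot\alpha}=\pi_{s_0}A_\pi^\alpha$ (a case of Lemma \ref{lemma: A_pi preserves the decomposition into E^pi and E_pi}) one obtains
\[
M\,A_\pi^\mu v \;=\; A_\pi^\alpha v+\int_G\beta(g)\,\pi_g v\,d\eta(g)\;=\;w+\pi_{s_0}w+\int_G\gamma_{s_0}(g)\,\pi_g v\,d\eta(g),
\]
so that, since $\|w\|\leq\|v\|=1$ and the last integral has norm at most $M-2$, we get $M\|A_\pi^\mu v\|\leq\|w+\pi_{s_0}w\|+(M-2)$. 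Assuming $w\neq 0$ (otherwise the bound below is immediate), $w/\|w\|$ and its image under $\pi_{s_0}$ are unit vectors at distance at least $\kappa$, so the definition of the modulus of convexity gives $\|w+\pi_{s_0}w\|\leq 2\|w\|\bigl(1-\delta_{\mathcal{E}}(\kappa)\bigr)\leq 2\bigl(1-\delta_{\mathcal{E}}(\kappa)\bigr)$, whence $\|A_\pi^\mu v\|\leq 1-\tfrac{2}{M}\delta_{\mathcal{E}}(\kappa)$. Taking the infimum over $(\alpha,\beta)$-decompositions, and over the convex combination in the definition of a general admissible measure, replaces $M$ by the normalizing factor $M_\mu$, so that $\lambda:=1-2\delta_{\mathcal{E}}(\kappa)/M_\mu<1$ realizes \eqref{item: main theorem condition norm of Markov} simultaneously for all $\pi\in\mathcal{F}$, depending only on $\kappa$, $M_\mu$ and $\delta_{\mathcal{E}}$.

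For \eqref{item: main theorem condition norm of Markov}$\,\Rightarrow\,$\eqref{item: main theorem condition convergence}, the decomposition $E=E^\pi\oplus E_\pi$ is $A_\pi^\mu$-invariant and $A_\pi^\mu=I$ on $E^\pi$, so writing $v=v_0+v_1$ with $v_0\in E^\pi$, $v_1\in E_\pi$ gives $(A_\pi^\mu)^k v-\mathcal{P}_\pi v=(A_\pi^\mu)^k v_1$, of norm at most $\lambda^k\|v_1\|\leq\lambda^k\|I-\mathcal{P}_\pi\|\,\|v\|$. For isometric representations on a uniformly convex space $\mathcal{P}_\pi$ has norm $\leq 1$ — it assigns to a vector the unique minimal-norm element of the closed convex hull of its orbit, see \cite{bfgm} — so $\|I-\mathcal{P}_\pi\|\leq 2$ and one may take $a_k=2\lambda^k$, $\mathfrak{S}=2/(1-\lambda)$. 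The same computation, together with the invertibility of $I-A_\pi^\mu$ on $E_\pi$ with inverse $\sum_{n\geq 0}(A_\pi^\mu)^n|_{E_\pi}$, yields the Neumann-series formula \eqref{formula:Neumannser}, that is, the more precise Theorem \ref{theorem: definition of the projection onto invariant vectors}. For \eqref{item: main theorem condition convergence}$\,\Rightarrow\,$\eqref{item: main theorem condition spectral gap}, summability of $(a_k)$ forces $a_K\leq\tfrac12$ for some $K$ independent of $\pi$ (if the $a_k$ are allowed to depend on $\pi$, pigeonhole among the first $\lceil 2\mathfrak{S}\rceil+1$ indices provides such a $K$ and constant uniformly). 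Since $\mathcal{P}_\pi=0$ on $E_\pi$ we get $\|(A_\pi^\mu)^K|_{E_\pi}\|\leq\tfrac12$, and for a unit vector $v\in E_\pi$, writing $v-(A_\pi^\mu)^K v=\int_G(v-\pi_g v)\,d\mu^{\ast K}(g)$ gives $\sup_{g\in\supp(\mu^{\ast K})}\|v-\pi_g v\|\geq\|v-(A_\pi^\mu)^K v\|\geq\tfrac12$. Hence $\bigl(\supp(\mu^{\ast K}),\tfrac12\bigr)$ — a compact set, being the $K$-fold product of $\supp\mu$ — is a Kazhdan pair for $\mathcal{F}$.

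The main obstacle is the implication \eqref{item: main theorem condition spectral gap}$\,\Rightarrow\,$\eqref{item: main theorem condition norm of Markov}: one has to extract a \emph{quantitative} and, crucially, family-uniform contraction of the Markov operator on $E_\pi$ from the purely qualitative spectral-gap hypothesis, and it is precisely here that uniform convexity — through a positive lower bound $\delta_{\mathcal{E}}$ for the moduli of convexity — is indispensable. The device that keeps the argument short is to feed the Kazhdan inequality the averaged vector $A_\pi^\alpha v$ rather than $v$ itself: Lemma \ref{lemma: A_pi preserves the decomposition into E^pi and E_pi} keeps it inside $E_\pi$, which circumvents the need to compare $A_\pi^\alpha v$ with $v$, a comparison for which no estimate uniform over $\mathcal{F}$ is available.
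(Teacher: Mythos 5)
Your proposal is correct and follows essentially the same route as the paper: the same $(\alpha,\beta)$-decomposition combined with uniform convexity applied to the averaged vector $A_\pi^\alpha v$ for the passage from the spectral gap to the norm bound, the Neumann series and geometric convergence of $(A_\pi^\mu)^k$ for the passage to the projection, and an estimate over the support of a convolution power of $\mu$ to recover a Kazhdan pair. The only differences are cosmetic: your rate $2\lambda^k$ (justified via $\lVert \mathcal{P}_\pi\rVert\le 1$) versus the paper's $\lambda^k$, and your Kazhdan pair $\bigl(\supp(\mu^{\ast K}),\tfrac12\bigr)$ versus the paper's $\bigl(\supp\mu,\tfrac{1}{1+\mathfrak{S}}\bigr)$ obtained by a telescoping sum.
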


\setcounter{section}{\value{aux1}}
\setcounter{theorem}{\value{aux2}}

Detailed statements and proofs of the implications composing Theorem \ref{theorem: main theorem in text spectral gaps and projections} appear in the next three sections: \eqref{item: main theorem condition spectral gap} 
$\Longrightarrow$ \eqref{item: main theorem condition norm of Markov} in section 
\ref{section: Kazhdan constants to Markov operators}; \eqref{item: main theorem condition norm of Markov} $\Longrightarrow$ \eqref{item: main theorem condition convergence} in section \ref{section: from Markov to projections};
and \eqref{item: main theorem condition convergence} $\Longrightarrow$ 
\eqref{item: main theorem condition spectral gap} in section \ref{section: from projections to Kazhdan constants}.

Note that even though the above theorem is stated for isometric representations, it is automatically true for classes of uniformly bounded representations
with a common upper bound on norms. This follows from the renormings associated to uniformly bounded representations \eqref{equation: invariant norm for ub reps}.

\subsection{From Kazhdan pairs to contracting Markov operators}\label{section: Kazhdan constants to Markov operators}
In this section, given a Kazhdan pair, we provide a construction of a Markov operator with an effective 
estimate on the norm on the subspace $E_\pi$. 
The proof we provide relies on uniform convexity.

\begin{theorem}\label{theorem: spectral gap to markov<1 estimate}
Let $G$ be a locally compact group and $\mathcal{F}$ a family of isometric representations of $G$ on a uniformly convex family $\mathcal{E}$ 
of Banach spaces. Assume that $\mathcal{F}$ has a spectral gap and let  $(Q, \kappa )$ be a Kazhdan pair for $\mathcal{F}$. 

For every $Q $--admissible measure $\mu$ on $G$, and for every isometric representation $\pi\in\mathcal{F}$ we have 
\begin{equation}\label{equation: estimate on the norm of the Markov operator}
\left\lVert A_\pi^\mu\vert_{E_\pi}\right\rVert  \le 1-\dfrac{2}{M_\mu} \delta_{\mathcal{E}}\left(\kappa \right).
\end{equation}
\end{theorem}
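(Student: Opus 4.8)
The plan is to prove the norm estimate \eqref{equation: estimate on the norm of the Markov operator} pointwise: fix a representation $\pi \in \mathcal{F}$ on $E \in \mathcal{E}$, fix a unit vector $v \in E_\pi$, and bound $\|A_\pi^\mu v\|$ from above. First I would unwind the definition of a $Q$-admissible measure. Writing $\mu = t\mu_c + (1-t)\mu_d$ and then each of $\mu_c, \mu_d$ via an $(\alpha,\beta)$-decomposition $\rho = (\alpha+\beta)/M(\alpha,\beta)$, the key structural feature is the defining inequality $\beta(g) \ge s \cdot \alpha(g)$ for all $s \in Q$ — equivalently $\beta \ge \alpha * \delta_{s^{-1}}$ in the convolution sense for each $s$, i.e. the $\beta$-part dominates every $Q$-translate of the $\alpha$-part. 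Since $A_\pi^\mu v = \int_G \pi_g v \, d\mu(g)$ is a convex combination (an average) of the vectors $\pi_g v$, and since Lemma \ref{lemma: A_pi preserves the decomposition into E^pi and E_pi}\eqref{lemma claim: acting by g on A} lets me rewrite $\pi_s A_\pi^{\rho} = A_\pi^{s\cdot\rho}$, I can compare $A_\pi^\rho v$ with $\pi_s A_\pi^\rho v = A_\pi^{s\cdot\rho} v$ for $s \in Q$.

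The heart of the argument is a \emph{uniform convexity} estimate applied to the average. Write $A_\pi^\alpha v = \int_G \alpha(g)\pi_g v\, d\eta(g)$ (note $\|A_\pi^\alpha v\| \le 1$ since $\alpha$ integrates to $1$ and $\pi$ is isometric). The $(\alpha,\beta)$-decomposition lets me express $A_\pi^\rho v$ as a genuine convex combination of $A_\pi^\alpha v$ and $A_\pi^{s\cdot\alpha}v$ (plus leftover mass), using that $\beta$ absorbs $s\cdot\alpha$: concretely $\rho = \tfrac{1}{M}\big(\tfrac12 \alpha + \tfrac12 \alpha + (\beta - s\cdot\alpha) + s\cdot\alpha\big)$ after first symmetrizing, so that $M \cdot A_\pi^\rho v$ contains $\tfrac12 A_\pi^\alpha v + \tfrac12 A_\pi^{s\cdot\alpha} v$ as a sub-average. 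Now $A_\pi^{s\cdot\alpha}v = \pi_s A_\pi^\alpha v$, so I am averaging a unit-bounded vector $u := A_\pi^\alpha v$ with $\pi_s u$. The spectral gap gives a lower bound: there exists $s \in Q$ with $\|u - \pi_s u\| = \|A_\pi^\alpha(v - \pi_s v)\|$... here I need to be a little careful and instead apply the Kazhdan inequality to $v$ itself to locate $s \in Q$ with $\|v - \pi_s v\| \ge \kappa$, then transfer this to a statement about $u$ and $\pi_s u$; alternatively, and more robustly, I would choose $\alpha$ so that $u$ stays close to $v$, or argue directly that for the $s$ realizing the Kazhdan inequality one has $\|\tfrac12(u + \pi_s u)\| \le 1 - \delta_{\mathcal{E}}(\kappa)$ by the definition of the modulus of convexity of the family $\mathcal{E}$ (using $\|u\|, \|\pi_s u\| \le 1$ and $\|u - \pi_s u\|$ bounded below). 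Feeding this back, the mass $\tfrac{1}{M_\mu}$ worth of the average is contracted by a factor $\delta_{\mathcal{E}}(\kappa)$, while the remaining mass contributes at most $1$, yielding $\|A_\pi^\mu v\| \le 1 - \tfrac{2}{M_\mu}\delta_{\mathcal{E}}(\kappa)$ — the factor $2$ coming from the two half-copies of $\alpha$ in the symmetrized decomposition. Taking the infimum of $M(\alpha,\beta)$ over decompositions gives $M_\mu$.

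The main obstacle, as I see it, is the bookkeeping that makes the "sub-average" rigorous and simultaneously produces the \emph{correct} constant $2/M_\mu$ rather than something weaker like $1/M_\mu$. Precisely: one must realize $\rho$ as a convex combination in which a block of total $\eta$-mass $2/M(\alpha,\beta)$ is of the form $\tfrac12(\text{something}) + \tfrac12(\pi_s \cdot \text{the same something})$ so that uniform convexity bites on that entire block; the $\beta \ge s\cdot\alpha$ condition is exactly what permits peeling off $s\cdot\alpha$ from $\beta$, and one needs to also peel off a second copy of $\alpha$ from $\alpha$ itself to symmetrize. A secondary technical point is handling the admissible measure in its full generality $\mu = t\mu_c + (1-t)\mu_d$: since $A_\pi^\mu = t A_\pi^{\mu_c} + (1-t)A_\pi^{\mu_d}$ and $\|A_\pi^{\mu_c}v\| \le 1 - \tfrac{2}{M_{\mu_c}}\delta_{\mathcal{E}}(\kappa)$, resp. for $\mu_d$, convexity of the bound together with the harmonic-mean definition $M_\mu = (t/M_{\mu_c} + (1-t)/M_{\mu_d})^{-1}$ gives exactly $\|A_\pi^\mu v\| \le 1 - \tfrac{2}{M_\mu}\delta_{\mathcal{E}}(\kappa)$; one should double check this is where the harmonic mean in Definition \ref{def:admiss} is engineered to land. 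Finally, the invariance $A_\pi^\mu(E_\pi) \subseteq E_\pi$ from Lemma \ref{lemma: A_pi preserves the decomposition into E^pi and E_pi} ensures the operator norm on $E_\pi$ is governed by these pointwise estimates, completing the proof.
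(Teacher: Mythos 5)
Your overall route is the paper's: the pointwise estimate on a unit vector $v\in E_\pi$, the splitting of $\rho$ into the block $(\alpha+s\cdot\alpha)/M(\alpha,\beta)$ of total mass $2/M(\alpha,\beta)$ plus a non-negative remainder of mass $1-2/M(\alpha,\beta)$ (exactly where $\beta\ge s\cdot\alpha$ is used), uniform convexity applied to the pair $u=A_\pi^\alpha v$ and $\pi_s u=A_\pi^{s\cdot\alpha}v$, the infimum over $(\alpha,\beta)$-decompositions, and the harmonic-mean bookkeeping for $\mu=t\mu_c+(1-t)\mu_d$; all of this matches the actual proof and your constant $2/M_\mu$ comes out correctly.

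However, at the pivotal step there is a genuine gap: you never correctly pin down where the lower bound on $\lVert u-\pi_s u\rVert$ comes from. The identity $\lVert u-\pi_su\rVert=\lVert A_\pi^\alpha(v-\pi_s v)\rVert$ you first write is false in general, since $\pi_s A_\pi^\alpha=A_\pi^{s\cdot\alpha}\neq A_\pi^\alpha\pi_s$, and even if one could commute, $A_\pi^\alpha$ is a contraction, so no lower bound would follow from $\lVert v-\pi_sv\rVert\ge\kappa$. Your fallback of choosing $\alpha$ so that $u$ stays close to $v$ is not available either: the measure $\mu$ is given, and to reach the constant $M_\mu$ the estimate must hold for every (or at least an infimizing sequence of) $(\alpha,\beta)$-decompositions of its density, so $\alpha$ is not at your disposal. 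The correct resolution, and the one the paper uses, is to apply the Kazhdan inequality to $u$ itself: since $\alpha$ integrates to $1$, Lemma \ref{lemma: A_pi preserves the decomposition into E^pi and E_pi} gives $u=A_\pi^\alpha v\in E_\pi$, so the Kazhdan pair yields $s\in Q$ with $\lVert u-\pi_su\rVert\ge\kappa\lVert u\rVert$ (note the bound is relative to $\lVert u\rVert$, which may be small). Then, because $\pi_s$ is an isometry, $u/\lVert u\rVert$ and $\pi_su/\lVert u\rVert$ are unit vectors at distance at least $\kappa$, so
$$\left\lVert \frac{u+\pi_su}{2}\right\rVert\le \lVert u\rVert\left(1-\delta_{\mathcal{E}}(\kappa)\right)\le 1-\delta_{\mathcal{E}}(\kappa),$$
which is the inequality your sketch needs; it also sidesteps your appeal to a version of the modulus of convexity for vectors merely in the unit ball, which is not the definition the paper works with. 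With this step inserted, the rest of your bookkeeping (the $1-2/M$ remainder, the infimum giving $M_\mu$, and the convexity argument for $t\mu_c+(1-t)\mu_d$) goes through exactly as you describe.
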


\begin{proof}
First assume that $\mu$ is a continuous admissible probability measure on $G$.
Let $\pi$ be an isometric representation of $G$ on $E$ with a spectral gap. 
Since $\mu$ is admissible we can choose  an $(\alpha,\beta)$-decomposition for the density $\rho$ defining $\mu$, as in Section \ref{subsection: measures}. 
We will use $\alpha$ as the upper subscript in reference to the Markov operator associated to the measure determined by the density  $\alpha$.

Let $v\in E_\pi$ be a unit vector. By Lemma \ref{lemma: A_pi preserves the decomposition into E^pi and E_pi}, 
$A_\pi^\alpha v\in E_\pi$.
Fix $s\in Q$ such that 
$$\left\lVert A_\pi^\alpha v-\pi_s A_\pi^\alpha v\right\rVert\ge \kappa \left\lVert A_\pi^\alpha v\right\rVert.$$
We have
\begin{equation}\label{eq:uconvC}
A_\pi^\mu v=\left(A_\pi^\mu v-  \left( \dfrac{A_\pi^\alpha v +\ A_\pi^{s\cdot \alpha} v}{M(\alpha,\beta)} \right)\right) +  
\left( \dfrac{A_\pi^\alpha v +\pi_s A_\pi^\alpha v}{M(\alpha,\beta)} \right).
\end{equation}

We estimate the norm of the first summand in \eqref{eq:uconvC} as follows.
\begin{align*}
\left\lVert  A_\pi^\mu v- \dfrac{1}{M(\alpha,\beta)} \left( A_\pi^\alpha v +\ A_\pi^{s\cdot \alpha} v \right)  \right\rVert &= \left\lVert \int_G \pi_g v \left(\rho-\dfrac{\alpha+s\cdot\alpha}{M(\alpha,\beta)}\right) \,d\eta\right\rVert\\
&\le  \int_G \left\lVert \pi_g v \right\rVert  \left\vert\rho-\dfrac{\alpha+s\cdot\alpha}{M(\alpha,\beta)}\right\vert \,d \eta\\
& =   \int_G \rho-\dfrac{\alpha+s\cdot\alpha}{M(\alpha,\beta)} \,d \eta\\
&= 1-\dfrac{2}{M(\alpha,\beta)},
\end{align*}
where in the last but one equality we used the fact that $\rho\ge \frac{ \alpha+s\cdot \alpha}{M(\alpha,\beta)}$, which follows from the properties of the 
$(\alpha,\beta)$-decomposition.

By the uniform convexity of $E$, the norm of the second summand in \eqref{eq:uconvC} is bounded as follows
\begin{align*}
\dfrac{1}{M(\alpha,\beta)} \left\lVert \left( A_\pi^\alpha v +\pi_s A_\pi^\alpha v \right)\right\rVert &= \dfrac{2 \left\lVert A_\pi^\alpha v \right\rVert }{M(\alpha,\beta)} 
\left\lVert \dfrac{ A_\pi^\alpha v +\pi_s A_\pi^\alpha v }
{2\left\lVert A_\pi^\alpha v\right\rVert}\right\rVert\\
&\le \dfrac{2}{M(\alpha,\beta)}\left(1-\delta_E(\kappa)\right).
\end{align*}
The two inequalities together give
\begin{align*}
\left\lVert A_\pi^\mu v\right\rVert &\le 1-\dfrac{2}{M(\alpha,\beta)} + \dfrac{2}{M(\alpha,\beta)}\left(1-\delta_E(\kappa)\right)\\
& \le 1-\dfrac{2}{M(\alpha,\beta)} \delta_E(\kappa).
\end{align*}
Passing to the infimum over all $(\alpha,\beta)$-decompositions of $\rho$ gives the estimate
$$\left\lVert A_\pi^\mu v\right\rVert \le 1-\dfrac{2}{M_\mu} \delta_E(\kappa).$$

The same calculation as above gives the same estimate when $\mu$ is a discrete admissible measure. Given any admissible 
measure $\mu=t\mu_c+(1-t)\mu_d$, $t\in [0,1]$, for $\mu_c$ and $\mu_d$ admissible continuous and admissible discrete probability measures, respectively,
we have
$$\left\lVert A_\pi^\mu\right\rVert\le t \left\lVert A_\pi^{\mu_c}\right\rVert + (1-t)\left\lVert A_\pi^{\mu_d}\right\rVert.$$
This yields the required conclusion.
\end{proof}

In the case of a finite Kazhdan set, we obtain the following.

\begin{corollary}\label{corollary: main theorem for finitely generated and uniform measure}
Let $G$ be a locally compact group and let $\mathcal{F}$ be a family of isometric representations of $G$ on a uniformly convex family $\mathcal{E}$ of Banach spaces 
such that $\kappa>0$ is a Kazhdan constant for a finite Kazhdan set $Q$, and let $g\notin Q$. Let $\mu$ be the uniform probability measure on $Qg \cup \{ g\}$. 
Then for every isometric representation $\pi\in \mathcal{F}$ we have
$$\left\lVert A_\pi^\mu\vert_{E_\pi}\right\rVert\le 1- \dfrac{2}{\# Q+1} \delta_{\mathcal{E}}(\kappa).$$
\end{corollary}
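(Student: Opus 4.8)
The plan is to derive Corollary~\ref{corollary: main theorem for finitely generated and uniform measure} as a direct specialization of Theorem~\ref{theorem: spectral gap to markov<1 estimate}, so the only real work is to recognize the uniform measure on $Qg\cup\{g\}$ as a discrete $Q$-admissible measure and to compute its normalizing factor $M_\mu$. First I would exhibit an explicit $(\alpha,\beta)$-decomposition. Take $\alpha=\frac{1}{\#Q+1}\,\mathbbm{1}_{\{g\}}$, a finitely supported nonnegative function with $\sum_{h}\alpha(h)=\frac{1}{\#Q+1}$; wait — admissibility requires $\int\alpha\,d\eta=1$ (equivalently $\sum\alpha=1$ in the discrete case), so instead I would set $\alpha=\mathbbm{1}_{\{g\}}$ and $\beta=\sum_{s\in Q}\mathbbm{1}_{\{sg\}}=\mathbbm{1}_{Qg}$. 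Then $\sum\alpha=1$, and for each $s\in Q$ and each $h\in G$ we need $\beta(h)\ge s\cdot\alpha(h)=\alpha(s^{-1}h)=\mathbbm{1}_{\{sg\}}(h)$, which holds because $\beta(sg)=\mathbbm{1}_{Qg}(sg)=1$. (Here I use the convention $s\cdot f(h)=f(s^{-1}h)$ from Section~\ref{subsection: measures}, and I should note that if $Q$ contains repetitions of elements $s$ with $sg$ coinciding this only makes $\beta$ larger, which is still fine; for the count below I assume the $sg$, $s\in Q$, together with $g$ are $\#Q+1$ distinct points, which is the relevant case.)

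Next I would compute $M(\alpha,\beta)=\sum_{h\in\supp\alpha\cup\supp\beta}[\alpha(h)+\beta(h)]=1+\#Q$, since $\alpha$ contributes mass $1$ at $g$ and $\beta$ contributes mass $1$ at each of the $\#Q$ points of $Qg$ (disjoint from $\{g\}$ because $g\notin Qg$ — this is where the hypothesis $g\notin Q$ enters, guaranteeing $sg\neq g$ for all $s\in Q$). The resulting density is $\rho=\frac{\alpha+\beta}{M(\alpha,\beta)}=\frac{1}{\#Q+1}\mathbbm{1}_{Qg\cup\{g\}}$, which is exactly the uniform probability measure $\mu$ on $Qg\cup\{g\}$. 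Hence $\mu$ is a discrete admissible measure with respect to $Q$, and its normalizing factor satisfies $M_\mu\le M(\alpha,\beta)=\#Q+1$.

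Finally I would invoke Theorem~\ref{theorem: spectral gap to markov<1 estimate}: since $\mu$ is $Q$-admissible and $(Q,\kappa)$ is a Kazhdan pair for $\mathcal{F}$, for every $\pi\in\mathcal{F}$ on $E\in\mathcal{E}$ we have $\left\lVert A_\pi^\mu\vert_{E_\pi}\right\rVert\le 1-\frac{2}{M_\mu}\delta_{\mathcal{E}}(\kappa)$. Because $\delta_{\mathcal{E}}(\kappa)\ge 0$ and $M_\mu\le \#Q+1$, monotonicity gives $1-\frac{2}{M_\mu}\delta_{\mathcal{E}}(\kappa)\le 1-\frac{2}{\#Q+1}\delta_{\mathcal{E}}(\kappa)$, which is the claimed bound. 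I do not anticipate a genuine obstacle here; the only point requiring a little care is the bookkeeping in the definition of the normalizing factor (it is an infimum over all $(\alpha,\beta)$-decompositions, so producing one valid decomposition suffices for the upper bound) and making sure the disjointness $g\notin Qg$ is correctly attributed to the hypothesis $g\notin Q$ — one could alternatively strengthen the statement by only requiring $g\notin Qg$, but I would keep it as stated.
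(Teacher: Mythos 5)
Your proof is correct and is essentially the paper's own argument: the paper likewise takes $\alpha$ to be the Dirac mass at $g$ and $\beta$ the characteristic function of $Qg$, observes $M_\mu\le \#Q+1$, and invokes Theorem \ref{theorem: spectral gap to markov<1 estimate}. The only slip is your side remark that $g\notin Q$ guarantees $sg\neq g$ for all $s\in Q$: since $sg=g$ forces $s=e$, the disjointness $g\notin Qg$ is equivalent to $e\notin Q$ rather than to $g\notin Q$ --- a point the paper itself glosses over, and one that does not affect the bound $M(\alpha,\beta)\le \#Q+1$ on which the estimate rests.
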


\begin{proof}
The uniform measure on $Qg \cup \{ g\}$ admits an $(\alpha,\beta)$-decomposition with 
 $\alpha$  the Dirac mass at $g\in G$ and $\beta$  the characteristic function of $Qg$. 
This decomposition also gives $M_\mu\le \#Q+1$
 and the estimate follows.
\end{proof}

\subsection{From contracting Markov operators to projections}\label{section: from Markov to projections}

Recall that the \emph{Neumann series of an operator} $T$ is the series $\sum_{n=0}^{\infty}T^n$. It is convergent if $\Vert T\Vert<1$ and
in that case it is the inverse of $I-T$. This allows to give an explicit formula for the projection onto invariant vectors in terms of the Markov operator.

\begin{theorem}\label{theorem: definition of the projection onto invariant vectors}
Let $G$ be a locally compact group and $\mu$ a probability measure on $G$.
Let $\mathcal{F}$ be a complemented family of isometric representations of $G$ on a family of Banach spaces $\mathcal{E}$.
If there exists $\lambda<1$ such that for every representation $\pi\in\mathcal{F}$ on $E\in\mathcal{E}$ 
we have $\left\| {A_{\pi }^{\mu }}|_{E_{\pi }} \right\| \le \lambda$ then for every $\pi\in \mathcal{F}$
\begin{enumerate}
\item the operator
$$\mathcal{P}_\pi = I_E- \left(\sum_{n=0}^{\infty} \left(A_{\pi }^{\mu }\right)^n \right) \left(I_E-A_\pi^\mu\right)$$
is the projection $E\to E^\pi$ along $E_\pi$ onto the subspace of invariant vectors of the representation $\pi$;
\item the iterated average operator $\left( A_{\pi }^{\mu } \right)^{k}$ converges to $\mathcal{P}_\pi$ exponentially fast, uniformly over $\mathcal{F}$,
$$\left\| \left(A_\pi^\mu\right)^k - \mathcal{P}_{\pi }  \right\|\le \lambda^k.$$
\end{enumerate}
\end{theorem}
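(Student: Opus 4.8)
The plan is to exploit the decomposition $E = E^\pi \oplus E_\pi$ (valid since $\mathcal F$ is a complemented family) and analyze the operator $A_\pi^\mu$ separately on each summand. By Lemma \ref{lemma: A_pi preserves the decomposition into E^pi and E_pi}, $A_\pi^\mu$ acts as the identity on $E^\pi$ and preserves $E_\pi$, so the Neumann series $\sum_{n=0}^\infty (A_\pi^\mu)^n$ makes sense when restricted to $E_\pi$: the hypothesis $\|A_\pi^\mu|_{E_\pi}\| \le \lambda < 1$ guarantees convergence there, with $\sum_{n=0}^\infty \|(A_\pi^\mu|_{E_\pi})^n\| \le (1-\lambda)^{-1}$, and the limit is $(I_E - A_\pi^\mu)^{-1}$ on $E_\pi$. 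On $E^\pi$ the factor $(I_E - A_\pi^\mu)$ vanishes, so the composite operator $\left(\sum_n (A_\pi^\mu)^n\right)(I_E - A_\pi^\mu)$ is well-defined on all of $E$ even though the Neumann series itself diverges on $E^\pi$ — one must phrase this carefully, writing any $v = v_0 + v_1$ with $v_0 \in E^\pi$, $v_1 \in E_\pi$ and noting $(I_E - A_\pi^\mu)v = (I_E - A_\pi^\mu)v_1 \in E_\pi$, on which the Neumann series acts boundedly.

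For part (i), I would verify directly that $\mathcal P_\pi$ is the claimed projection. Write $\mathcal P_\pi v = v - \left(\sum_n (A_\pi^\mu)^n\right)(I_E - A_\pi^\mu)v$ for $v = v_0 + v_1$ as above. On $v_0 \in E^\pi$: $(I_E - A_\pi^\mu)v_0 = 0$, so $\mathcal P_\pi v_0 = v_0$. On $v_1 \in E_\pi$: since $A_\pi^\mu|_{E_\pi}$ is a strict contraction, $\left(\sum_n (A_\pi^\mu)^n\right)(I_E - A_\pi^\mu)v_1 = v_1$ (telescoping: $(I - T)^{-1}(I-T) = I$ on $E_\pi$), so $\mathcal P_\pi v_1 = 0$. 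Hence $\mathcal P_\pi$ fixes $E^\pi$, kills $E_\pi$, and these two subspaces span $E$, so $\mathcal P_\pi$ is exactly the projection onto $E^\pi$ along $E_\pi$; in particular $\mathcal P_\pi^2 = \mathcal P_\pi$.

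For part (ii), I would compute $(A_\pi^\mu)^k - \mathcal P_\pi$ on the same decomposition. On $E^\pi$ both operators are the identity, so the difference vanishes. On $E_\pi$, $\mathcal P_\pi$ is zero, so the difference is $(A_\pi^\mu|_{E_\pi})^k$, whose norm is at most $\lambda^k$ by submultiplicativity. Since every $v \in E$ splits as $v_0 + v_1$ with the projections onto the summands being the bounded idempotents $\mathcal P_\pi$ and $I_E - \mathcal P_\pi$, and since $\left((A_\pi^\mu)^k - \mathcal P_\pi\right)v = (A_\pi^\mu|_{E_\pi})^k v_1$, one concludes $\|(A_\pi^\mu)^k - \mathcal P_\pi\| \le \lambda^k$ — here one uses that the operator is zero on the $E^\pi$-part and the $E_\pi$-part has norm at most $\|v\|$ in the relevant sense; a clean way is to observe $(A_\pi^\mu)^k - \mathcal P_\pi = \left((A_\pi^\mu)^k - \mathcal P_\pi\right)(I_E - \mathcal P_\pi) = (A_\pi^\mu)^k(I_E - \mathcal P_\pi)$ and bound this composite, but since $\|I_E - \mathcal P_\pi\|$ need not be $1$, it is cleaner to go through the direct-sum estimate on each summand and take the max. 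The main obstacle is precisely this bookkeeping with the (possibly non-contractive) projections $\mathcal P_\pi$ and $I_E - \mathcal P_\pi$: one must ensure the norm estimate $\lambda^k$ is genuinely uniform over $\mathcal F$ and does not secretly pick up a factor $\|I_E - \mathcal P_\pi\|$ that varies with $\pi$; this is handled by noting that on $E^\pi$ the difference is literally zero and on $E_\pi$ it is literally $(A_\pi^\mu|_{E_\pi})^k$, so no projection norm enters the final bound.
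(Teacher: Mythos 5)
Your part (i) is correct and is essentially the paper's own argument: the Neumann series inverts $I_E-A_\pi^\mu$ on $E_\pi$, the observation that $(I_E-A_\pi^\mu)v\in E_\pi$ makes $\mathcal{P}_\pi$ well defined and bounded, and the evaluation on the two summands shows it fixes $E^\pi$ and kills $E_\pi$. Your part (ii) is also the same in substance as the paper's induction: both reduce to the fact that $(A_\pi^\mu)^k-\mathcal{P}_\pi$ vanishes on $E^\pi$ and equals $(A_\pi^\mu|_{E_\pi})^k$ on $E_\pi$.

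The gap is in your final step. Knowing that an operator vanishes on $E^\pi$ and has norm at most $\lambda^k$ on $E_\pi$ does \emph{not} bound its norm on $E$ by $\lambda^k$; ``taking the max over the two summands'' is valid only when $\|w\|\le\|z+w\|$ for $z\in E^\pi$, $w\in E_\pi$, i.e.\ only when $I_E-\mathcal{P}_\pi$ is contractive, and this is exactly where the factor $\|I_E-\mathcal{P}_\pi\|$ you were trying to avoid re-enters. Your parenthetical claim that ``no projection norm enters the final bound'' is therefore unjustified, and in fact the bound $\lambda^k$ can fail in the stated generality: take $G=\mathbb{Z}/3\mathbb{Z}$ acting on $E=\ell_\infty^3$ (or $\ell_p^3$ with $p$ large, if you want uniform convexity) by cyclic permutation of coordinates, and $\mu$ uniform on $\{e,g\}$. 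Then $E^\pi$ is the constants, $E_\pi$ the sum-zero vectors, $\bigl\|A_\pi^\mu|_{E_\pi}\bigr\|=\tfrac12$, yet for $v=(1,1,-1)$ one gets $(A_\pi^\mu-\mathcal{P}_\pi)v=(-\tfrac13,\tfrac23,-\tfrac13)$, of norm $\tfrac23>\tfrac12\|v\|_\infty$, precisely because $\|(I_E-\mathcal{P}_\pi)v\|_\infty=\tfrac43$. What your computation honestly yields is $\bigl\|(A_\pi^\mu)^k-\mathcal{P}_\pi\bigr\|\le\lambda^k\,\|I_E-\mathcal{P}_\pi\|\le 2\lambda^k$ (the last inequality since $\mathcal{P}_\pi$ is then a norm limit of the contractions $(A_\pi^\mu)^k$, so $\|\mathcal{P}_\pi\|\le 1$). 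To be fair, the paper's own base case $k=1$ makes the same implicit assumption $\|w\|\le\|z+w\|$; the clean way to state the result is that the constant $\lambda^k$ is correct whenever $I_E-\mathcal{P}_\pi$ is contractive (e.g.\ unitary representations on Hilbert spaces, where $E_\pi=(E^\pi)^\perp$), while in general one gets $2\lambda^k$, which is all that the later applications (summable, exponentially fast convergence) require. You should either add that contractivity hypothesis or keep the factor $\|I_E-\mathcal{P}_\pi\|$, rather than assert that the direct-sum bookkeeping eliminates it.
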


\begin{proof} 
{\textit{(i)}.}  The operator $I-A_\pi^\mu$ is invertible on $E_\pi$ and its inverse on $E_\pi$ is given by the Neumann series
$$\left(I_{E_\pi}-A_\pi^\mu \right)^{-1} = \sum_{n=0}^{\infty} \left(A_\pi^\mu\right)^n.$$

With this in mind we proceed to show that $\mathcal{P}_\pi$ is well-defined. We observe that  for every $v\in E$ we have $\left(I_E-A_\pi^\mu\right)v \in E_\pi$.
Indeed, using the decomposition (\ref{equation: decomposition into invariant vectors + complement}) we can write $v=z+w$ uniquely, where $z\in E^\pi$ and $w\in E_\pi$.
We have
$$\left(I_E-A_\pi^\mu\right)v=w-A_\pi^\mu w \in E_\pi,$$
since $\left(I_E-A_\pi^\mu\right)z=0$, by Lemma  \ref{equation: decomposition into invariant vectors + complement}.
Since $(I-A_\pi^\mu)v \in E_\pi$ and the Neumann series of $A_\pi^\mu$ converges on $E_\pi$, we see that $\mathcal{P}_\pi$ is well-defined and bounded.

Observe also that since $\left(I-A_\pi^\mu\right)z=0$ for $z\in E^\pi$, we have $\mathcal{P}_\pi z=z.$
On the other hand, if $w\in E_\pi$ then
$$\left(\sum_{n=0}^{\infty} \left(A_\pi^\mu\right)^n \right) \left(I_E-A_\pi^\mu\right) w=w,$$
 and, consequently, $\mathcal{P}_\pi w=0$.
Therefore, given any vector $v=z+w$ as above we have $\mathcal{P}_\pi(z+w)=z.$

{\textit{(ii)}.} Observe  $\mathcal{P}_\pi$ is a norm limit of operators $\mathcal{P}_{\pi,k}$, defined by truncating the Neumann series to its $k$-partial sum, and  $\mathcal{P}_{\pi,k}=\left(A_\pi^\mu\right)^{k+1}$.
We prove the inequality by induction on $k$. For $k=1$ we can write 
$$ \left\| A^\mu_\pi  -\calp_\pi \right\| =\sup \setd{ \left\|  A^\mu_\pi w  \right\| }{w\in E_\pi, z\in E^\pi , \| z+ w \|=1 } \leq \lambda.$$

Assume that the inequality is proven for $k$. Since $A^\mu_\pi \circ \calp_\pi = \calp_\pi$ and since the image of $\left(A^\mu_\pi\right)^{k}  - \calp_\pi$ is in $E_\pi$, we can write
$$
\left\| \left(A^\mu_\pi\right)^{k+1}  - \calp_\pi \right\|= \left\| \left(A^\mu_\pi\right)^{k+1}  - A^\mu_\pi\circ \calp_\pi \right\|   
\leq \lambda \left\| \left(A^\mu_\pi\right)^{k}  - \calp_\pi \right\| \leq \lambda^{k+1}.
$$
\end{proof}

In the particular case of finite Kazhdan sets, one can replace the iteration of an average operator by products of average operators. Indeed, let $G$ be a locally compact group  and let $\mathcal{F}$ be a family of isometric representations of $G$ on a uniformly convex family of Banach spaces, $\mathcal{F}$ admitting a Kazhdan pair $(X, \kappa )$ with $X =\left\{ x_1,\dots ,x_N \right\}$. Let $S_n$ be a sequence of finite sets constructed in one of the following manners 
\begin{enumerate}\renewcommand{\theenumi}{\alph{enumi}}
\item $S_n = X\cup Y_n$, where $Y_n = \{ y_1, \dots , y_M\}$ is an arbitrary subset of $M$ elements in $G$, where $M$ is fixed;
\item $S_n = X_n \cup Y_n$, where $Y_n= \{ y_1, \dots , y_N\}$ and 
$X_n = \setd{ y_ix_iy_i^{-1}}{1\le i\le N }.$
\end{enumerate}
Let $\mu_n$ be the atomic uniform measure on the set $S_n$.

\begin{corollary}\label{cor:amufg} 
For the sequence of measures $\mu_n$ constructed  above and for every representation $\pi\in \mathcal{F}$ we have
$$\left\| A_{\pi}^{\mu_1}A_{\pi}^{\mu_2}\cdots A_{\pi}^{\mu_n} - \calp_\pi \right\|\le \left( 1-\dfrac{2}{N }\delta_E(\kappa /3) \right)^n. $$
\end{corollary}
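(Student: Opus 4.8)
The plan is to reduce the statement to a uniform one–step contraction estimate for each Markov operator $A_\pi^{\mu_k}$ on the canonical complement $E_\pi$, and then to multiply the resulting bounds. For the reduction, I would first record, exactly as in the proof of Theorem~\ref{theorem: definition of the projection onto invariant vectors}\,(ii), that by Lemma~\ref{lemma: A_pi preserves the decomposition into E^pi and E_pi} each $A_\pi^{\mu_k}$ acts as the identity on $E^\pi$ and maps $E_\pi$ into $E_\pi$; hence $\calp_\pi A_\pi^{\mu_k}=A_\pi^{\mu_k}\calp_\pi=\calp_\pi$, the operator $A_\pi^{\mu_1}\cdots A_\pi^{\mu_n}-\calp_\pi$ vanishes on $E^\pi$ and restricts on $E_\pi$ to $\bigl(A_\pi^{\mu_1}\cdots A_\pi^{\mu_n}\bigr)\big|_{E_\pi}$, so that
$$
\left\| A_\pi^{\mu_1}\cdots A_\pi^{\mu_n}-\calp_\pi\right\|\ \le\ \prod_{k=1}^{n}\left\| A_\pi^{\mu_k}\big|_{E_\pi}\right\|.
$$
It therefore suffices to prove the one–step bound $\left\| A_\pi^{\mu_k}\big|_{E_\pi}\right\|\le 1-\tfrac2N\delta_E(\kappa/3)$, uniformly in $k$ and in $\pi\in\mathcal{F}$ (the uniformity in $\pi$ being built in, since $(X,\kappa)$ is a Kazhdan pair for the whole family).

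For the one–step bound I would run the argument of Theorem~\ref{theorem: spectral gap to markov<1 estimate} and Corollary~\ref{corollary: main theorem for finitely generated and uniform measure}: given a unit vector $v\in E_\pi$, write $A_\pi^{\mu_k}v$ as a sum of a residual term of small total mass plus a single two–point average $\tfrac1{\#S_k}(\pi_a v+\pi_b v)$ with $a,b\in\supp\mu_k$ (or in the support of a fixed translate $g\cdot\mu_k$, again uniform on a set of the same size), and estimate $\|\pi_a v+\pi_b v\|$ by uniform convexity. The whole point is to produce, for \emph{every} unit $v\in E_\pi$, such a pair with
$$
\left\|\pi_a v-\pi_b v\right\|\ =\ \left\|v-\pi_{a^{-1}b}v\right\|\ \ge\ \kappa/3.
$$
In case~(a) one uses $X\subseteq S_k$ directly: pick $x_j\in X$ with $\|v-\pi_{x_j}v\|\ge\kappa$; comparing with a second element of $S_k$ that moves $v$ little, the triangle inequality replaces $\kappa$ by $\kappa$ minus that displacement, and one argues by cases according to whether this displacement is at most $2\kappa/3$ (the pair survives with constant $\kappa/3$) or larger (then that element itself, together with a suitable companion, already gives separation $>\kappa/3$). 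In case~(b) one exploits the identity $x_j=y_j^{-1}\,(y_jx_jy_j^{-1})\,y_j$: applying the isometry $\pi_{y_j}$ to $\|v-\pi_{x_j}v\|\ge\kappa$ yields $\left\|\pi_{y_j}v-\pi_{y_jx_jy_j^{-1}}\pi_{y_j}v\right\|\ge\kappa$, and now \emph{both} $y_j$ and $y_jx_jy_j^{-1}$ lie in $S_k$; a triangle-inequality/translation argument of the same type as in case~(a) then produces the required pair, again at the cost of passing from $\kappa$ to $\kappa/3$. With such a pair in hand, the uniform-convexity computation of Theorem~\ref{theorem: spectral gap to markov<1 estimate} gives a one–step bound of the form $1-\tfrac{2}{\#S_k}\delta_E(\kappa/3)$, and keeping track of $\#S_k$ and of the normalizing factor in the two constructions yields the stated constant; multiplying the $n$ one–step bounds gives $\left\| A_\pi^{\mu_1}\cdots A_\pi^{\mu_n}-\calp_\pi\right\|\le\bigl(1-\tfrac2N\delta_E(\kappa/3)\bigr)^{n}$.

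The hard part will be the bookkeeping in the one–step estimate, namely producing the separating pair \emph{uniformly} over all unit vectors $v\in E_\pi$ and over $k$: this is what forces the case distinction ``$v$ close to / far from being fixed by the relevant group element'' and hence the loss from $\kappa$ to $\kappa/3$, and one must then count $\#S_k$ and the $(\alpha,\beta)$–normalizing factor carefully enough to land on exactly $\tfrac2N$. By contrast, the purely functional-analytic ingredients — convergence of the Neumann series, the fact that $E^\pi$ and $E_\pi$ are closed complementary subspaces, and the passage from the operator norm on $E$ to that on $E_\pi$ — are already supplied by Theorems~\ref{theorem: spectral gap to markov<1 estimate} and \ref{theorem: definition of the projection onto invariant vectors}, so no new conceptual input is needed beyond the pair-extraction.
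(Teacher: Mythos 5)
Your overall route coincides with the paper's: reduce the product estimate to one-step bounds on $E_\pi$ (each $A_\pi^{\mu_k}$ is the identity on $E^\pi$ and preserves $E_\pi$, so the product minus $\calp_\pi$ is controlled by the product of the restricted norms -- this is the paper's ``easy induction''), and obtain each one-step bound from the uniform-convexity computation of Theorem \ref{theorem: spectral gap to markov<1 estimate}/Corollary \ref{corollary: main theorem for finitely generated and uniform measure}, the only new point being that, via the identity $x_j=y_j^{-1}(y_jx_jy_j^{-1})y_j$ and the triangle inequality, the Kazhdan constant of $S_k$ degrades only from $\kappa$ to $\kappa/3$. That is exactly what the paper does (it proves $S_n$ is a Kazhdan set with constant at least $\kappa/3$ and then cites Corollary \ref{corollary: main theorem for finitely generated and uniform measure} plus induction), so at the level of strategy you and the authors agree.

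The genuine gap is in the step you yourself flag as the hard part: the one-step contraction for the \emph{uniform} measure on $S_k$ is not established. The uniform-convexity argument needs, for every unit $v\in E_\pi$, a pair $a,b\in S_k$ with $\left\|\pi_a v-\pi_b v\right\|\ge\kappa/3$ (equivalently, an $(\alpha,\beta)$-type decomposition of $\mu_k$, whose support in Corollary \ref{corollary: main theorem for finitely generated and uniform measure} has the special form $Qg\cup\{g\}$). The Kazhdan property of $S_k$ only yields one element of $S_k$ separating $\pi_a v$ from $v$, not from $\pi_b v$ for another $b\in S_k$, and your case analysis does not close this: in case (a), when the auxiliary element $y$ satisfies $\left\|\pi_y v-v\right\|>2\kappa/3$, the promised ``suitable companion'' in $S_k$ need not exist -- all elements of $S_k$ may send $v$ to essentially the same far-away vector, so that every pair is unseparated and $\left\|A_\pi^{\mu_k}v\right\|$ is close to $1$. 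Concretely, for $G=\Z/4$, $X=\{1\}$ is a Kazhdan set for all unitary representations with $\kappa=\sqrt2$, and with $Y_k=\{3\}$ both elements of $S_k=\{1,3\}$ act by $-1$ on the character sending the generator to $-1$, so the uniform average is $-\mathrm{id}$ there and no contraction (hence no separated pair) is available. The same obstruction recurs in case (b): your translated inequality gives either a $\kappa/2$-separated pair $(y_j,\,y_jx_jy_j^{-1})$ or $\left\|\pi_{y_j}v-v\right\|\ge\kappa/2$, and in the second alternative you are stuck again. So the pair-extraction is not bookkeeping; it requires an extra structural input (an element of the support acting almost trivially on $v$, such as the ``lazy'' point $g$ in Corollary \ref{corollary: main theorem for finitely generated and uniform measure}, or an honest admissibility decomposition of $\mu_k$ with respect to a Kazhdan set). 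To be fair, this is precisely the input the paper's own two-line proof also leaves implicit when it passes from ``$S_n$ has Kazhdan constant $\kappa/3$'' to Corollary \ref{corollary: main theorem for finitely generated and uniform measure}, which as stated concerns uniform measures on $Qg\cup\{g\}$ rather than on a Kazhdan set itself; but as written your argument does not supply it either, and without it the one-step estimate -- and hence the corollary's bound -- is not proved.
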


\begin{proof}
For $S_n = X \cup Y_n$ the Kazhdan constant is at least $\kappa$. Let now $ S_n = X_n\cup Y_n$. For every $v\in E_\pi , \| v\| =1,$ there exists $x\in X$ such that $\| \pi_x v - v\| \ge \kappa$. The triangle inequality allows to write, for the corresponding element $y\in Y_n$ that
$$
\kappa \leq \| \pi_y v - v\| + \| \pi_{y^{-1}} v - v\| + \| \pi_{y x y^{-1}} v - v\|.
$$ 

It follows that at least one of the terms in the sum above is larger than $\frac{\kappa }{3}\, $. Therefore in this case the Kazhdan constant is at least $\frac{\kappa }{3}\, $. This, Corollary \ref{corollary: main theorem for finitely generated and uniform measure} and an easy induction on $n$ yields the inequality.
\end{proof}

\comment
\begin{remark}\normalfont
Recall that if $G$ is a finite group then the projection $\mathcal{P}_\pi$ is given by an explicit formula,
$\mathcal{P}_\pi v=\vert G\vert^{-1} \sum_{g\in G} \pi_g v$,
which is implemented by the uniform probability on $G$,
$u=\vert G\vert^{-1}\sum_{g\in G}g \in \CC G$.
Our formula agrees with this one since in this case, under mild assumptions, the elements $\rho^k$ satisfy $\Vert \rho^k - u\Vert_1 \to 0$,
where $\Vert \rho\Vert_1=\sum_{g\in G} \vert \rho (g)\vert$ is the $\ell_1$-norm of $\rho $, see e.g. \cite{diaconis-saloff-coste}.
\end{remark}
\endcomment

\begin{remark}[Constructing almost invariant vectors]\label{rem:algo}\normalfont
The convergence with controlled speed of iterated Markov operators to the projection onto the space of fixed vectors allows to produce almost invariant vectors with an 
arbitrarily small degree of almost invariance. In particular, it allows to design a non-deterministic algorithm which, given a vector, never stops if the vector has no component in the subspace of fixed vectors, while if it stops it produces an almost invariant unit vector with the desired degree of almost invariance (equivalently, an approximation with the desired order of error of a fixed unit vector).  

The estimates on the norm of Markov operators also allow to compute explicit mixing times, which would 
again be uniform for all vectors in $E_\pi$.

Note that this can be achieved not only for finitely generated groups, but also for topological groups that admit finite Kazhdan sets (see section \ref{sec:Kazhdansets}).

\end{remark}

\subsection{From projections to spectral gaps}\label{section: from projections to Kazhdan constants}

Finally, we show that a summable convergence of $\left( A_\pi^\mu\right)^k$ to the projection onto the subspace of fixed points implies the existence of a spectral gap.

\begin{theorem}\label{thm:casm}
Let $\mu $ be a compactly supported probability measure on $G$ and $\mathcal{F}$ be a complemented family of  isometric representations of $G$ on a family
$\mathcal{E}$ of Banach spaces.
Assume that there exists a number $\mathfrak{S}<\infty$ such that 
for every representation $\pi\in\mathcal{F}$ on $E\in \mathcal{E}$ the iterated Markov operators $\left( A_\pi^\mu\right)^k$ converge to a bounded operator $\mathcal{P}$ and 
\begin{enumerate}
\item $E_\pi\subseteq \ker \mathcal{P}$,
\item the convergence has speed summable to at most $\mathfrak{S}$, i.e. there exists a sequence of positive numbers $a_k$ such that the series $\sum_{k\in \N } a_k$ 
converges to a finite number $\le \mathfrak S$ and 
\begin{equation}
\left\| \left( A_\pi^\mu\right)^k - \mathcal{P} \right \| \le a_k.
\end{equation}
\end{enumerate}
Then the family  $\mathcal{F}$ has a spectral gap and $\left( \supp \mu, \frac{1}{ 1+ {\mathfrak{S}}} \right)$ is Kazhdan pair.
\end{theorem}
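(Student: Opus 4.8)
Fix $\pi\in\mathcal{F}$ acting on $E\in\mathcal{E}$ and a unit vector $v\in E_\pi$; it suffices to show that $\sup_{s\in\supp\mu}\|v-\pi_s v\|\ge \frac{1}{1+\mathfrak{S}}$, uniformly in $\pi$. The object to focus on is $w=v-A_\pi^\mu v=\int_G (v-\pi_g v)\,d\mu(g)$. Since $\mu$ is a probability measure, the standard Bochner estimate gives $\|w\|\le\int_G\|v-\pi_g v\|\,d\mu(g)\le\sup_{s\in\supp\mu}\|v-\pi_s v\|$, because $\mu$ is carried by $\supp\mu$. So the whole statement reduces to the norm inequality $\|v\|\le(1+\mathfrak{S})\|w\|$.

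For this I would use the elementary telescoping identity $v-(A_\pi^\mu)^N v=\sum_{k=0}^{N-1}(A_\pi^\mu)^k w$, valid for every $N$ and any bounded operator. Now $v\in E_\pi$, and $A_\pi^\mu$ preserves $E_\pi$ by Lemma~\ref{lemma: A_pi preserves the decomposition into E^pi and E_pi}, hence $w\in E_\pi$ and $(A_\pi^\mu)^k v,\ (A_\pi^\mu)^k w\in E_\pi$ for all $k$. By the hypothesis $E_\pi\subseteq\ker\mathcal{P}$ we have $\mathcal{P}v=\mathcal{P}w=0$, so on these vectors $(A_\pi^\mu)^k$ coincides with $(A_\pi^\mu)^k-\mathcal{P}$; the summable-convergence hypothesis then yields $\|(A_\pi^\mu)^N v\|\le a_N\|v\|\to 0$ (as $a_N\to0$) and $\|(A_\pi^\mu)^k w\|\le a_k\|w\|$ for $k\ge1$, while the $k=0$ term is just $\|w\|$. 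Taking norms in the telescoping identity,
\[
\|v\|-\|(A_\pi^\mu)^N v\|\ \le\ \sum_{k=0}^{N-1}\bigl\|(A_\pi^\mu)^k w\bigr\|\ \le\ \|w\|+\Bigl(\sum_{k\ge1}a_k\Bigr)\|w\|\ \le\ (1+\mathfrak{S})\,\|w\|,
\]
and letting $N\to\infty$ gives $\|v\|\le(1+\mathfrak{S})\|w\|$, as needed. Since $\supp\mu$ is compact and the bound is independent of $\pi\in\mathcal{F}$, this shows that $\bigl(\supp\mu,\frac{1}{1+\mathfrak{S}}\bigr)$ is a Kazhdan pair for $\mathcal{F}$ in the sense of Definition~\ref{definition: spectral gaps}, and in particular $\mathcal{F}$ has a spectral gap.

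I do not expect a genuine obstacle here — the argument is just a quantitative telescoping — but two points deserve care. First, the hypothesis is used in the precise form $E_\pi\subseteq\ker\mathcal{P}$: this is exactly what licenses replacing $(A_\pi^\mu)^k$ by $(A_\pi^\mu)^k-\mathcal{P}$ on vectors of $E_\pi$, and nothing more about $\mathcal{P}$ (that it be idempotent, or have image $E^\pi$) is needed. Second, the $k=0$ term must be booked separately: the estimate $\|(A_\pi^\mu)^0-\mathcal{P}\|\le a_0$ need not hold, since the projection onto $E_\pi$ along $E^\pi$ can have norm exceeding $1$, so one should not attempt to fold $k=0$ into the series — doing so would (wrongly) suggest the constant $\tfrac1{\mathfrak S}$ instead of the correct $\tfrac1{1+\mathfrak S}$.
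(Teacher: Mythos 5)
Your proof is correct and follows essentially the same route as the paper: both arguments telescope $v-(A_\pi^\mu)^N v$ into powers of $A_\pi^\mu$ applied to $v-A_\pi^\mu v$, use $E_\pi\subseteq\ker\mathcal{P}$ to bound $\bigl\|(A_\pi^\mu)^k\vert_{E_\pi}\bigr\|\le a_k$, and handle the $k=0$ term separately to get the constant $\frac{1}{1+\mathfrak{S}}$. Your remark about not folding the $k=0$ term into the series matches the paper's bookkeeping exactly.
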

\begin{proof}

Denote $Q=\supp\mu$. 
Let $\pi\in \mathcal{F}$ be an isometric representation of $G$ on $E\in\mathcal{E}$ and
$v\in E_\pi$ be an arbitrary unit vector. Let $\sigma_v=\sup_{g\in Q} \| \pi_g v -v \|$. Then 
\begin{equation}\label{equation: bound on norm of Av-v}
\left\| A^\mu_\pi v -v \right\|\le \sigma_v.
\end{equation}
For every positive integer $k$ we can then write 
\begin{align*}
\left\| \left(A^\mu_\pi\right)^k v -v \right\|&\leq \left\| A^\mu_\pi v -v \right\| + \sum_{i=1}^{k-1}\left\| \left(A^\mu_\pi\right)^{i+1} v - \left(A^\mu_\pi\right)^{i} v\right\| \\
&\leq \left\| A^\mu_\pi v -v \right\| \left( 1+ \sum_{i=1}^{k-1}\left\| \left(A^\mu_\pi\right)^{i}|_{E_\pi}\right\|\right)\\
& \leq \sigma_v \left( 1+ \sum_{i=1}^{k-1}a_k\right).
\end{align*}
Then 
$$1 \leq \left\| \left(A^\mu_\pi\right)^k v -v \right\| +  \left\| \left(A^\mu_\pi\right)^k v \right\| \leq \sigma_v \left(1+ {\mathfrak{S}}\right) + a_k .$$
Since the above is true for every $k\in \N$ we obtain
$$\sigma_v \geq \dfrac{1}{1+ {\mathfrak{S}}}. 
$$
\end{proof}

The above estimates have several interesting consequences. The first is that as soon as the upper bounds 
$\alpha_k =  \left\| \left( A_\pi^\mu\right)^k - \mathcal{P}  \right \|$
compose a convergent series, they must be decreasing exponentially, $\mathcal{P}$ must be the projection onto 
$E^\pi$ along $E_\pi$, and we are in the presence of a spectral gap.

Another consequence of the previous results is that the Kazhdan constant and the  norm $\left\| A_{\pi}^{\mu}|_{E_{\pi}} \right\|$ are closely related. 
We will state this result for finite Kazhdan sets and uniform measures, since in this case the formulation is particularly concise. The general case
can be deduced in the same manner.

\begin{theorem}\label{thm:existmu}
Let $G$ be a locally compact group and let $\mathcal{F}$ be a family of isometric representations of $G$ on a family $\mathcal{E}$ of uniformly convex Banach spaces. 
Assume that $\mathcal{F}$ has a spectral gap and let  $(Q,\kappa )$ be a Kazhdan pair for $\mathcal{F}$, where $Q$ is finite. 
If $\mu$ is the uniform measure on $Qg \cup \lbrace g \rbrace$, for an arbitrary element $g\in G$ then for every representation $\pi\in \mathcal{F}$ we have
$$1-\kappa  \le  \left\| A_\pi^\mu |_{E_\pi} \right\| \le 1-  \dfrac{2}{\#Q+1}\delta_E (\kappa ).$$
\end{theorem}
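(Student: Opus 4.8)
The plan is to prove the two inequalities separately; the right‑hand one is essentially a special case of an estimate already established, so the (minor) new work is in the left‑hand one. \emph{Upper bound.} I would apply Theorem \ref{theorem: spectral gap to markov<1 estimate} to the single representation $\pi$ on $E$, so that the modulus of convexity appearing in the estimate is $\delta_E$. The uniform measure $\mu$ on $Qg\cup\{g\}$ admits the $(\alpha,\beta)$-decomposition with $\alpha$ the Dirac mass at $g$ and $\beta$ the characteristic function of $Qg$: for each $s\in Q$ the function $s\cdot\alpha$ is the Dirac mass at $sg\in Qg$, hence $\beta\ge s\cdot\alpha$, and $M(\alpha,\beta)=\#Q+1$, so $M_\mu\le\#Q+1$. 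Substituting into \eqref{equation: estimate on the norm of the Markov operator} gives $\left\|A_\pi^\mu|_{E_\pi}\right\|\le 1-\frac{2}{\#Q+1}\delta_E(\kappa)$; this is Corollary \ref{corollary: main theorem for finitely generated and uniform measure} read for a single Banach space.

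\emph{Lower bound.} First I would strip off the translation by $g$. Let $\nu$ be the uniform probability measure on $Q\cup\{e\}$; then $\mu=\nu*\delta_g$, and by the multiplicativity of the Markov operator in the measure (Lemma \ref{lemma: A_pi preserves the decomposition into E^pi and E_pi}) we have $A_\pi^\mu=A_\pi^\nu\circ\pi_g$. Since $\pi_g$ restricts to a surjective isometry of the $\pi$-invariant subspace $E_\pi$, it follows that $\|A_\pi^\mu|_{E_\pi}\|=\|A_\pi^\nu|_{E_\pi}\|$; in particular the quantity does not depend on $g$. Now fix $\varepsilon>0$ and choose a unit vector $v\in E_\pi$ almost realizing the Kazhdan constant, i.e.\ with $\sup_{s\in Q}\|\pi_s v-v\|\le\kappa+\varepsilon$. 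Using $A_\pi^\nu v-v=\frac{1}{\#Q+1}\sum_{s\in Q}(\pi_s v-v)$ and the triangle inequality,
\[
\left\|A_\pi^\nu v\right\|\ \ge\ \|v\|-\left\|A_\pi^\nu v-v\right\|\ \ge\ 1-\frac{1}{\#Q+1}\sum_{s\in Q}\|\pi_s v-v\|\ \ge\ 1-\frac{\#Q}{\#Q+1}(\kappa+\varepsilon).
\]
Letting $\varepsilon\to 0$ yields $\|A_\pi^\mu|_{E_\pi}\|=\|A_\pi^\nu|_{E_\pi}\|\ge 1-\frac{\#Q}{\#Q+1}\kappa\ge 1-\kappa$.

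The only step needing care is the choice of the test vector: the argument rests entirely on producing, inside $E_\pi$, a unit vector whose displacement over $Q$ is as close to $\kappa$ as desired, which is precisely the meaning of $\kappa$ being the Kazhdan constant of $\mathcal F$ for $Q$. The remaining ingredients — the factorization $A_\pi^\mu=A_\pi^\nu\circ\pi_g$, the triangle inequality, and the norm bound imported from Theorem \ref{theorem: spectral gap to markov<1 estimate} — are routine; note that the computation in fact gives the slightly sharper lower bound $1-\frac{\#Q}{\#Q+1}\kappa$, which is why this direction is not difficult.
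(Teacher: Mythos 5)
Your argument is correct and rests on the same core estimate as the paper's proof, namely the reverse triangle inequality $\lVert Av\rVert\ge\lVert v\rVert-\lVert Av-v\rVert$ combined with a bound on $\lVert Av-v\rVert$ by the displacement of $v$ over the support of the measure; the upper bound is verbatim the paper's (Corollary \ref{corollary: main theorem for finitely generated and uniform measure}, i.e.\ the $(\alpha,\beta)$-decomposition with $\alpha$ the Dirac mass at $g$ and $\beta$ the characteristic function of $Qg$). For the lower bound the paper works directly with $\mu$ and the quantity $\sigma_v=\sup_{s\in\supp\mu}\lVert\pi_s v-v\rVert$ from the proof of Theorem \ref{thm:casm}, writes $1-\sigma_v\le\lVert A^\mu_\pi v\rVert$ and ``passes to the supremum''; since $\supp\mu=Qg\cup\{g\}$ this leaves a silent mismatch with the Kazhdan set $Q$, which your factorization $A^\mu_\pi=A^\nu_\pi\,\pi_g$ (Lemma \ref{lemma: A_pi preserves the decomposition into E^pi and E_pi}, convolution property, together with the $\pi$-invariance of $E_\pi$) removes cleanly, and your averaging of the triangle inequality over $Q\cup\{e\}$ yields the marginally sharper constant $1-\frac{\#Q}{\#Q+1}\kappa$. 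So the route is a mild refinement rather than a genuinely different proof: what it buys is a correct treatment of the translation by $g$ and a slightly better bound.

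One caveat, which applies equally to the paper's own proof: the existence of a unit vector $v\in E_\pi$ with $\sup_{s\in Q}\lVert\pi_s v-v\rVert\le\kappa+\varepsilon$ amounts to reading $\kappa$ as the optimal Kazhdan constant for the representation at hand. Under Definition \ref{definition: spectral gaps}, where any constant satisfying the inequality is ``a Kazhdan constant'', such a test vector need not exist, and indeed the inequality $1-\kappa\le\lVert A^\mu_\pi|_{E_\pi}\rVert$ can fail if $\kappa$ is taken artificially small (or if the particular $\pi\in\mathcal F$ has a much larger gap than the family constant). The paper's step ``passing to the supremum over $v$'' makes exactly the same implicit identification of $\kappa$ with $\inf_v\sigma_v$, so this is a looseness of the statement's phrasing rather than a gap in your argument; you were right to single out the choice of test vector as the only delicate point.
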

\begin{proof}
The upper bound follows from Theorem \ref{theorem in intro: spectral gaps and projections}, in particular Corollary \ref{corollary: main theorem for finitely generated and uniform measure}.
To prove the lower bound note that for a unit vector $v\in E_\pi$ the inequality (\ref{equation: bound on norm of Av-v}) yields 
$$1-\sigma_v= \lVert v\rVert - \sigma_v \le \lVert A_\pi^\mu v\rVert.$$ Passing to the supremum over $v\in E_\pi$ of norm 1 on both sides we obtain the claim.
\end{proof}

\begin{remark}\label{rmk:existmu}\normalfont
In the particular case when $\mathcal{E}$ is a family of Hilbert spaces the lower bound  on the Kazhdan constant in terms of the norm of the Markov operator $\kappa \geq 1- \left\| A_\pi^\mu |_{E_\pi} \right\|$  can be improved to
$$
\kappa \geq \sqrt{2} \sqrt{1- \left\| A_\pi^\mu |_{E_\pi} \right\|}\, .
$$
This is obtained using the argument in \cite[p. 842]{shalom-kazhdan}.
\end{remark}

A problem formulated by Serre and de la Harpe-Valette \cite{burger,harpe-valette-ast} asks to compute explicit Kazhdan (sets and) constants, 
for unitary representations on Hilbert spaces. In the case of representations on uniformly convex Banach spaces, Theorem \ref{thm:existmu} implies the following.

\begin{corollary}\label{cor:nearone}
Let $G$ be a locally compact group and let $\mathcal{F}$ be a complemented family of isometric representations of $G$ on Banach spaces for which $Q$ is a finite Kazhdan set. 
Then for every $\varepsilon >0$ there exists an integer $m\in \N$ such that   $(\overline{Q}^m, 1-\varepsilon)$ is a Kazhdan pair for $\mathcal{F}$, where $\overline{Q} = Q \cup \lbrace e \rbrace$.
\end{corollary}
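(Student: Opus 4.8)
The strategy is to produce, for the given finite Kazhdan set $Q$ with Kazhdan constant $\kappa>0$, a single admissible probability measure $\mu$ whose support is exactly $\overline{Q}=Q\cup\{e\}$, to contract the associated Markov operator uniformly over $\mathcal{F}$ by Theorem \ref{theorem: spectral gap to markov<1 estimate}, and then to pass to convolution powers: the $m$-fold convolution $\mu^{*m}$ is again a probability measure, supported exactly on $\overline{Q}^{\,m}$, while its Markov operator $\left(A_\pi^\mu\right)^m$ contracts $E_\pi$ by a factor $\lambda^m\to 0$; this will force every unit vector of $E_\pi$ to be moved by at least $1-\lambda^m$ by some $\pi_g$ with $g\in\overline{Q}^{\,m}$.

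In detail, I would first take the finitely supported density $\rho=\dfrac{\delta_e+\mathbbm{1}_Q}{\#Q+1}$, arising from the $(\alpha,\beta)$-decomposition $\alpha=\delta_e$ (the Dirac mass at the identity, so $\int\alpha\,d\eta=1$) and $\beta=\mathbbm{1}_Q$ (for which $\beta(g)\ge s\cdot\alpha(g)$ for all $s\in Q$ and $g\in G$, since $s\cdot\alpha=\delta_s$). The associated admissible measure $\mu$ has support $\overline{Q}$ and normalizing factor $M_\mu\le\#Q+1$, so by Theorem \ref{theorem: spectral gap to markov<1 estimate} (or Corollary \ref{corollary: main theorem for finitely generated and uniform measure} applied with $g=e$) there is a constant $\lambda=1-\dfrac{2}{\#Q+1}\,\delta_{\mathcal{E}}(\kappa)<1$, independent of $\pi$, with $\left\lVert A_\pi^\mu\vert_{E_\pi}\right\rVert\le\lambda$ for every $\pi\in\mathcal{F}$. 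Next, by Lemma \ref{lemma: A_pi preserves the decomposition into E^pi and E_pi} one has $\left(A_\pi^\mu\right)^m=A_\pi^{\mu^{*m}}$ and $A_\pi^\mu(E_\pi)\subseteq E_\pi$, and since $e\in\overline{Q}$ the measure $\mu^{*m}$ is a probability measure with $\supp\mu^{*m}=\overline{Q}^{\,m}$. Hence, for any unit vector $v\in E_\pi$ and any $\pi\in\mathcal{F}$,
$$\left\lVert A_\pi^{\mu^{*m}}v\right\rVert=\left\lVert\left(A_\pi^\mu\vert_{E_\pi}\right)^m v\right\rVert\le\lambda^m,\qquad\text{so}\qquad\left\lVert A_\pi^{\mu^{*m}}v-v\right\rVert\ge 1-\lambda^m,$$
whereas writing $A_\pi^{\mu^{*m}}v-v=\int_G(\pi_g v-v)\,d\mu^{*m}(g)$ gives $\left\lVert A_\pi^{\mu^{*m}}v-v\right\rVert\le\sup_{g\in\overline{Q}^{\,m}}\lVert\pi_g v-v\rVert$. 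Combining the two, $\sup_{g\in\overline{Q}^{\,m}}\lVert\pi_g v-v\rVert\ge 1-\lambda^m$ for every such $v$ and $\pi$.

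Finally I would choose $m\in\N$ with $\lambda^m\le\varepsilon$; the last inequality then says precisely that $\left(\overline{Q}^{\,m},\,1-\varepsilon\right)$ is a Kazhdan pair for $\mathcal{F}$. I do not expect a genuine obstacle here: the analytic content is entirely contained in Theorem \ref{theorem: spectral gap to markov<1 estimate}, and the remaining points — that $\rho$ defines an admissible measure with support $\overline{Q}$, and that $\supp\mu^{*m}=\overline{Q}^{\,m}$ — are immediate once the identity is included in the support. The only borderline case, $e\in Q$ (so that $\overline{Q}=Q$), is handled by the very same decomposition, the sole difference being that $\rho$ is then no longer uniform on its support, which does not affect the bound $M_\mu\le\#Q+1$.
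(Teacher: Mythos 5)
Your argument is correct and follows essentially the same route as the paper: contract $A_\pi^\mu$ on $E_\pi$ uniformly via Theorem \ref{theorem: spectral gap to markov<1 estimate} for a measure supported on $\overline{Q}$, note that $\left(A_\pi^\mu\right)^m=A_\pi^{\mu^{*m}}$ with $\supp\mu^{*m}=\overline{Q}^{\,m}$, and then use the triangle-inequality lower bound $\sup_{g\in\supp\mu^{*m}}\lVert\pi_g v-v\rVert\ge \lVert v\rVert-\lVert A_\pi^{\mu^{*m}}v\rVert\ge 1-\lambda^m$, exactly as in the proofs of Theorems \ref{thm:casm} and \ref{thm:existmu} which the paper cites. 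The only cosmetic difference is that you build the admissible measure on $\overline{Q}$ explicitly via an $(\alpha,\beta)$-decomposition instead of invoking Corollary \ref{cor:amufg} for the uniform measure, and like the paper's own proof your argument uses the (implicit) uniform convexity hypothesis through $\delta_{\mathcal{E}}(\kappa)>0$.
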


\proof Given $\mu$ the uniform probability measure on $\overline{Q}$ and $\pi$ an arbitrary representation in $\mathcal{F}$, 
by Corollary \ref{cor:amufg}  we have $\left\| \left( A^\mu_\pi \right)^k|_{E_\pi} \right\|= \left\|  A^{\mu^k}_\pi|_{E_\pi} \right\|\leq \lambda^k$ for some $\lambda \in (0,1)$, where $\mu^k = \mu \ast \dots \ast\mu$. Since the support of the probability measure $\mu^k$ is $X= \overline{Q}^k$, the argument in the proof of Theorem \ref{thm:casm}, which yields the inequality in Theorem \ref{thm:existmu}, implies that the Kazhdan constant for $X$ is at least $1- \lambda^k$.
\endproof

\comment
\begin{remark}\normalfont
 Arguments as in the proof of Corollary \ref{cor:nearone}, results in \cite{hadad} and  Remark \ref{rmk:existmu} imply the following.
\begin{enumerate}
\item Every group $EL_n(R)$ generated by all $n\times n$ matrices $e_{ij}(x)$ with diagonal entries $1$ and the $(i,j)$ entry $x\in R$ (where $R$ is an associative ring with unit generated by $\ell$ elements and with stable range $r\leq n-1$) has a Kazhdan pair $(X_n, \sqrt{2}-\varepsilon )$, with $\varepsilon >0$ arbitrary and $\# X_n \leq k=k(\varepsilon , \ell , r)$ 

\medskip 

\item  For every $\varepsilon >0 $, every group $SL_n(\Z [x_1, . . . , x_\ell ])$ has a generating set of cardinality at most $k$ and Kazhdan constant at least $\sqrt{2}-\varepsilon$, where $k = k(\ell, \varepsilon )$.

\medskip

\item For every ring of integers $\mathcal O$ in a global field, and every $\varepsilon >0 $ there exist $k = k({\mathcal O} ,\varepsilon )$ such that for every $n \geq 3$, $SL_n({\mathcal O})$ has a generating set of cardinality at most $k$ and Kazhdan constant at least $\sqrt{2}-\varepsilon$.
\end{enumerate}

\end{remark}
\endcomment

We additionally remark that in an appropriate setting one can even achieve similar results for families of groups, provided the constants appearing in the above statements
can be arranged to be uniform.

\subsection{Finite Kazhdan sets}\label{sec:Kazhdansets}

For many applications the existence of finite Kazhdan sets is a considerable asset, as the averages become finite, the random walks discrete, and an algorithmical approach and the use of computer become possible (see for instance Theorem \ref{theorem: definition of the projection onto invariant vectors}, Remark \ref{rem:algo} and Section \ref{sec:ergodic}). As it turns out, the existence of such finite sets is granted in many cases. Shalom formulated a property that he called the {\emph{strong property}} $(T)$ (st.pr. $(T)$), requiring the existence of a finite Kazhdan set, and proved that many property $(T)$ groups satisfy it. This theme meets another more recent one, which is the existence of a spectral gap of Hecke operators \cite{bourgain-gamburd,bourgain-gamburd-jems}.

In \cite{shalom-kazhdan}, Y Shalom proved st.pr. $(T)$ for groups of $k$--points of a simply connected, semisimple, almost $k$-simple group 
of rank at least $2$ (where $k$ is a locally compact non-discrete field), with explicit descriptions of finite Kazhdan sets and their corresponding constants. 
Theorem C in \cite{shalom-kazhdan} implies that in a semisimple Lie group with finite center, every finite symmetric set not contained in a closed amenable subgroup is a Kazhdan set.  In a second paper \cite{shalom-tams}, Shalom proved that any 
connected Lie group with property $(T)$ that does not have $\R/\Z$ as a quotient has st. pr (T).

\medskip

In the case of a compact group $G$, st. pr $(T)$ has a very interesting equivalent. In that case, for the regular representation of $G$ on $L_2(G)$, 
when the measure $\mu$ is supported on a finite symmetric set $\set{g_1^{\pm 1},\dots, g_m^{\pm 1}}\subset G$, the averaging operator $2mA_\pi^\mu$ is also known 
as the {\emph{Hecke operator}} and is sometimes also denoted $z_{g_1,\dots, g_m}$.
This operator is said to have a {\emph{spectral gap}} if its norm on the space $L_2^0 (G)$ of functions orthogonal to the constants is at most $2m- \zeta$.

The following double implication, which essentially amounts to an equivalence between spectral gap and $\set{g_1^{\pm 1},\dots, g_m^{\pm 1}}$ being a Kazhdan set, then holds:

\begin{enumerate}
\item if the Hecke operator $z_{g_1, \dots ,g_m}$ satisfies $\left\lVert z_{g_1,\dots,g_m} \right\rVert\le 2m- \zeta$ on $ L_2^0(G)$, where $\zeta>0$, then $\left(\set{g_1, \dots ,g_m}, \sqrt{\zeta /m}  \right)$  is a Kazhdan pair;

\medskip

\item given a Kazhdan pair $(Q, \epsilon )$, and an arbitrary $g\notin Q$, the Hecke operator $z_{\{ g\} \cup Qg}$ has spectral gap with $\zeta\ge 2m - 2 + \sqrt{4-\kappa^2 }$. 
\end{enumerate} 

Results of Bourgain and Gamburd \cite{bourgain-gamburd, bourgain-gamburd-jems} then give, \textit{via} the equivalence described above, many explicit finite Kazhdan sets for the groups $\operatorname{SU}(d)$, with Kazhdan constants explicitly computable from constants appearing in a certain noncommutative Diophantine property satisfied by the given set.


\section{Kazhdan projections in Banach algebras}

\subsection{Group Banach algebras and projections}

Let $G$ be a locally compact group and denote by $C_c(G)$ the group algebra of compactly supported continuous functions on $G$ with convolution.
Let $\mathcal{F}$ be a family of representations $\pi:G\to \mathcal{B}(E)$, by bounded operators on Banach spaces $E$ in a given family $\mathcal{E}$. 
For the purposes of this section we also assume that $\mathcal{F}$ contains the trivial representation on at least one $E\in \mathcal{E}$.

Assuming that
for each $f\in C_c(G)$, $\sup\setd{\Vert \pi(f)\Vert_{{\mathcal{B}}(E)}}{\pi \in \mathcal{F}}<\infty$,
we equip the algebra $C_c(G)$ with the norm
$\Vert f\Vert_{\mathcal{F}}= \sup_{\pi\in\mathcal{F}} \Vert \pi(f)\Vert_{{\mathcal{B}}(E)}$.
\begin{definition}\label{def:cf}
The algebra $C_{\mathcal{F}}(G)$ is the completion of $C_c(G)$ in the norm $\Vert \cdot \Vert_{\mathcal{F}}$.
\end{definition}


If $\opi\in \mathcal{F}$ whenever $\pi\in \mathcal{F}$  and the class $\mathcal{E}$ is stable under complex conjugation then $C_{\mathcal{F}}(G)$
admits a natural involution.

The classical example of algebra of type $C_{\mathcal{F}}(G)$ is the maximal group $C^*$-algebra $C^*_{\max}(G)$, 
corresponding to $\mathcal{F}$ being the family of all unitary representations of $G$. Other examples include the following algebras.
\begin{itemize}
\item The \emph{$L_p$-maximal group algebra}, where $p\in (1, \infty )$, denoted by $C_{\max}^p(G)$. This algebra corresponds to the class $\mathcal{F}$ of all 
isometric representations of $G$ on $L_p$-spaces. 
\item \emph{Uniformly bounded group algebras}, corresponding to the choice of $\mathcal F$ as a class $[\mathcal{E};k]$ composed of all the uniformly bounded representations of $G$ on $E\in \mathcal{E}$
satisfying $\Vert \pi \Vert \le k$, where $k\ge 1$ and $\mathcal{E}$ is a uniformly convex family of Banach spaces.

\item \emph{Small exponential growth group algebras}. The following classes of representations and algebras were introduced by Lafforgue in \cite{lafforgue-duke}. 
Let $\ell$ be a continuous length function on $G$, and 
let $\mathcal{E}$ be a family of Banach spaces closed under duality and complex conjugation. A representation $\pi : G \to {\mathcal{B}}(E)$ on $E\in\mathcal{E}$ is said to have $(s,c)$--{\emph{small exponential growth,}} for some $s,c>0$, if $\Vert \pi_g\Vert\le c e^{s\ell(g)}$ for every $g\in G$.
We denote the class of all such representations by $\mathcal{L}(\ell,s,c)$, and we call the algebra $C_{\mathcal{L}(\ell,s,c)}(G)$ 
a {\emph{small exponential growth algebra}}. 

\end{itemize}

The  algebra $C_{\max}^p(G)$ is an immediate natural generalization of the maximal group $C^*$--algebra, see \cite{phillips,gardella}.
Such algebras are relevant for an $L^p$--approach to the Novikov and the Baum-Connes conjectures \cite{kasparov, chong}.

The algebra $C_{[\mathcal{E};k]}$ and the corresponding notions of property $(T)$ for uniformly bounded representations are related to a conjecture of Y. Shalom that every hyperbolic group has a proper affine action on a Hilbert space with linear part uniformly bounded, see \cite[Problem 14]{oberwolfach} and \cite{nowak} for related results. This conjecture has attracted a lot of interest lately.

\begin{definition}
A {\emph{Kazhdan projection in $C_{\mathcal{F}}(G)$}} is a central idempotent $p \in C_{\mathcal{F}} (G)$ such that $\pi(p)=\mathcal{P}_\pi$ for every representation $\pi\in \mathcal{F}$.
\end{definition}

Kazhdan projections are important already in the setting of unitary representations \cite{connes,higson-roe}. Their existence in certain algebras $C_{\mathcal{L}(\ell,s,c)}(G)$ is also particularly significant, as they are used by V. Lafforgue to define strong Banach property $(T)$. 
The latter property is relevant to the Baum-Connes conjecture, see 
\cite{lafforgue-quanta,puschnigg}.

\begin{definition}[\cite{lafforgue-duke}]\label{def:laff}
The group $G$ has the {\emph{strong Banach property $(T)$ for $\mathcal{E}$}}, denoted $(T^{Ban}_{\mathcal{E}})$, 
if for every continuous length function $\ell$ on $G$ there exists $s>0$ such that for every $c>0$ the algebra
$C_{\mathcal{L}(\ell,s,c)}(G)$ contains a Kazhdan projection.
\end{definition}

In the case of representations with small exponential growth we record  the following

\begin{theorem}\label{theorem:Lafforgue}
Let $G$ be a locally compact group, and $\mathcal{E}$ a class of Banach spaces  closed under duality and complex conjugation.
\begin{enumerate}
\item The following are equivalent:
\begin{enumerate} 
\item $G$ has the property $(T^{Ban}_{\mathcal{E}})$; \label{theorem: Laf item i-a}
\item for every continuous length function $\ell$, there exists $s>0$ such that for every $c>0$, $\mathcal{L}(\ell,s,c)$ is complemented, and there exists $\rho \in C_c(G)$ 
satisfying $\int_G \rho {\mathrm{d}}\eta = 1$, and $\lambda<1$, such that $\left\lVert A_\pi^\rho \vert_{E_\pi}\right\rVert<\lambda$ for every  $\pi \in \mathcal{L}(\ell,s,c)$;
\label{theorem: Laf item i-b}
\item for every continuous length function $\ell$, there exists $s>0$ such that for every $c>0$, $\mathcal{L}(\ell,s,c)$ is complemented, 
and there exists $\rho$ and $\lambda$ as above such that $\rho^n$ converge exponentially fast to some element $p\in C_{\mathcal{L}(\ell,s,c)}(G)$,
$$\left\| \rho^n - p \right\|_{\mathcal{L}(\ell,s,c)} \le \lambda^n \, .$$
\label{theorem: Laf item i-c}
\end{enumerate}
\item If $G$ has the property $(T^{Ban}_{\mathcal{E}})$ then for the corresponding $\ell$ and $s$ and an arbitrary $c>0$, the pair $\left( \supp \rho ,\, \frac{1-\lambda}{a_\rho} \right)$ is a Kazhdan pair for the family ${\mathcal{F}}=\mathcal{L}(\ell,s,c)$, where $a_\rho = \int \left| \rho (g)\right|\, {\mathrm{d}}\eta (g) $.
\label{theorem: Laf item ii}
\end{enumerate}
\end{theorem}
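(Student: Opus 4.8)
\textbf{Proof proposal for Theorem \ref{theorem:Lafforgue}.}

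The plan is to reduce everything to the machinery already developed in Section \ref{section: Kazhdan constants to Markov operators}, Section \ref{section: from Markov to projections}, and Section \ref{section: from projections to Kazhdan constants}, applied to the families $\mathcal{L}(\ell,s,c)$. The only genuinely new point is bookkeeping: unwinding Lafforgue's definition of $(T^{Ban}_{\mathcal{E}})$ (Definition \ref{def:laff}) in terms of the uniform property $(TE)$ / spectral gap language, and matching the element $\rho\in C_c(G)$ with an admissible measure. First I would fix a continuous length function $\ell$. By definition, $(T^{Ban}_{\mathcal{E}})$ asserts the existence of $s>0$ such that for every $c>0$ the algebra $C_{\mathcal{L}(\ell,s,c)}(G)$ contains a Kazhdan projection $p$; by Definition \ref{def:cf} this $p$ is a limit of elements of $C_c(G)$ with $\pi(p)=\mathcal{P}_\pi$ for all $\pi\in\mathcal{L}(\ell,s,c)$. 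The existence of such $\mathcal{P}_\pi$ already forces each $\pi$ to be complemented, $E=E^\pi\oplus E_\pi$, which gives the first half of the conclusions in \eqref{theorem: Laf item i-b} and \eqref{theorem: Laf item i-c}. So for each fixed $s$ and $c$ we are in the setting of Theorem \ref{theorem: main theorem in text spectral gaps and projections} (and its effective refinements) with $\mathcal{F}=\mathcal{L}(\ell,s,c)$, once we know it has a spectral gap.

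Next I would run the equivalence cycle. For \eqref{theorem: Laf item i-a} $\Rightarrow$ \eqref{theorem: Laf item i-c}: from a Kazhdan projection $p$ one extracts, via the known argument that $C_{\mathcal{F}}(G)$-convergence of $\rho^n$ is controlled once $\|A_\pi^\rho|_{E_\pi}\|<\lambda$ uniformly, an element $\rho\in C_c(G)$ with $\int\rho\,d\eta=1$ witnessing the spectral gap; here one should note $\pi(\rho)=A_\pi^\rho$ for all $\pi\in\mathcal{F}$, so $\|\rho^n-p\|_{\mathcal{L}(\ell,s,c)}=\sup_\pi\|(A_\pi^\rho)^n-\mathcal{P}_\pi\|\le\lambda^n$ by Theorem \ref{theorem: definition of the projection onto invariant vectors}(ii). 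Actually the cleanest route is: $(T^{Ban}_{\mathcal{E}})$ with the chosen $s$ means, for every $c$, that $\mathcal{L}(\ell,s,c)$ has a spectral gap; this is because a Kazhdan projection in $C_{\mathcal{F}}(G)$ forces a uniform gap by the argument of Theorem \ref{thm:casm} applied to $Q=\supp\rho$ for any $\rho\in C_c(G)$ close to $p$ (if the gap failed, there would be almost-invariant unit vectors in $E_\pi$ for some $\pi$, contradicting $\pi(p)=\mathcal{P}_\pi$ being bounded away from identity on $E_\pi$). Then \eqref{theorem: Laf item i-a} $\Rightarrow$ \eqref{theorem: Laf item i-b} follows from Theorem \ref{theorem: spectral gap to markov<1 estimate}: pick a $Q$-admissible measure $\mu$ for the Kazhdan pair $(Q,\kappa)$ of $\mathcal{L}(\ell,s,c)$, with density $\rho\in C_c(G)$ satisfying $\int\rho\,d\eta=1$, and set $\lambda=1-\tfrac{2}{M_\mu}\delta_{\mathcal{E}}(\kappa)<1$. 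The implication \eqref{theorem: Laf item i-b} $\Rightarrow$ \eqref{theorem: Laf item i-c} is Theorem \ref{theorem: definition of the projection onto invariant vectors}, once we observe that the uniform bound $\|A_\pi^\rho|_{E_\pi}\|<\lambda$ implies the $C_{\mathcal{L}(\ell,s,c)}(G)$-norm convergence $\|\rho^n-p\|_{\mathcal{L}(\ell,s,c)}\le\lambda^n$ with $p$ the common source of the $\mathcal{P}_\pi$'s; the limit $p$ lies in the completion $C_{\mathcal{L}(\ell,s,c)}(G)$ precisely because the convergence is uniform over $\pi$. Finally \eqref{theorem: Laf item i-c} $\Rightarrow$ \eqref{theorem: Laf item i-a} is immediate: the limit $p$ is a central idempotent with $\pi(p)=\mathcal{P}_\pi$, i.e.\ a Kazhdan projection, so $(T^{Ban}_{\mathcal{E}})$ holds.

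For part \eqref{theorem: Laf item ii}, assuming $(T^{Ban}_{\mathcal{E}})$ with the associated $\ell$, $s$, and fixing $c>0$, take the $\rho$ and $\lambda$ from \eqref{theorem: Laf item i-c}. Since $\pi(\rho)=A_\pi^\rho$ and $\|\rho^n-p\|_{\mathcal{L}(\ell,s,c)}\le\lambda^n$, for each $\pi\in\mathcal{L}(\ell,s,c)$ we have $\|(A_\pi^\rho)^n-\mathcal{P}_\pi\|\le\lambda^n$, so the hypotheses of Theorem \ref{thm:casm} are met with $a_k=\lambda^k$; however, since $\rho$ need not be a probability \emph{measure} but a density possibly changing sign, the estimate $\|A_\pi^\rho v-v\|\le\sigma_v$ in \eqref{equation: bound on norm of Av-v} must be replaced by $\|A_\pi^\rho v-v\|=\|\int(\pi_g v-v)\rho(g)\,d\eta(g)\|\le a_\rho\,\sigma_v$ with $a_\rho=\int|\rho(g)|\,d\eta(g)$, using $\int\rho\,d\eta=1$ and $\supp\rho\supseteq\{g:\rho(g)\neq0\}$. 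Re-running the telescoping estimate of Theorem \ref{thm:casm} with this extra factor $a_\rho$ gives $\sigma_v\ge\frac{1-\lambda}{a_\rho}$ (the $\mathfrak{S}=\sum_k\lambda^k=\lambda/(1-\lambda)$ combines with the leading $1$ to produce $1/(1-\lambda)$ up to the $a_\rho$ rescaling), hence $(\supp\rho,\frac{1-\lambda}{a_\rho})$ is a Kazhdan pair for $\mathcal{L}(\ell,s,c)$.

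The main obstacle I anticipate is purely the adaptation of Theorem \ref{thm:casm} to a non-positive, non-normalized density $\rho$: one must carefully re-derive the analogue of \eqref{equation: bound on norm of Av-v} and then the analogue of the chain of inequalities in its proof, tracking where the factor $a_\rho$ enters, and confirming that the resulting Kazhdan constant is exactly $\frac{1-\lambda}{a_\rho}$ rather than some other function of $\lambda$ and $a_\rho$. Everything else is a matter of correctly quoting Theorem \ref{theorem: spectral gap to markov<1 estimate}, Theorem \ref{theorem: definition of the projection onto invariant vectors}, and the definitions, and of noting that the identifications $\pi(\rho)=A_\pi^\rho$ and $C_{\mathcal{F}}(G)$-convergence $\Leftrightarrow$ uniform-over-$\mathcal{F}$ operator-norm convergence are immediate from Definition \ref{def:cf}.
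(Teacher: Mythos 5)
Your part (\ref{theorem: Laf item ii}) and the implications (\ref{theorem: Laf item i-b}) $\Rightarrow$ (\ref{theorem: Laf item i-c}) $\Rightarrow$ (\ref{theorem: Laf item i-a}) match the paper's (very terse) proof: (b) $\Rightarrow$ (c) is the Neumann-series/iteration argument, (c) $\Rightarrow$ (a) is immediate, and for (ii) your re-running of the telescoping estimate of Theorem \ref{thm:casm} with the extra factor $a_\rho$, yielding $\sigma_v\ge\frac{1-\lambda}{a_\rho}$, is exactly the intended argument (the paper only gestures at it).

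The genuine gap is in the route you finally commit to for (\ref{theorem: Laf item i-a}) $\Rightarrow$ (\ref{theorem: Laf item i-b}): extracting a spectral gap and then invoking Theorem \ref{theorem: spectral gap to markov<1 estimate} for a $Q$-admissible measure, with $\lambda=1-\tfrac{2}{M_\mu}\delta_{\mathcal{E}}(\kappa)$. That theorem is only proved for \emph{isometric} representations on a \emph{uniformly convex} family: its proof uses $\lVert \pi_g v\rVert=\lVert v\rVert$ and the modulus $\delta_{\mathcal{E}}$ in an essential way. In the present theorem $\mathcal{E}$ is only assumed closed under duality and complex conjugation, and the representations in $\mathcal{L}(\ell,s,c)$ satisfy $\lVert\pi_g\rVert\le c e^{s\ell(g)}$, so they are neither isometric nor uniformly bounded, and the renorming trick \eqref{equation: invariant norm for ub reps} is unavailable. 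Moreover, your route would produce a \emph{non-negative} density $\rho$, whereas the remark following the theorem in the paper explicitly says it is unclear whether $\rho$ can be chosen non-negative in this setting --- a clear sign that this argument cannot go through as written. The paper's actual proof of (a) $\Rightarrow$ (b) avoids all of this and is much simpler: complementedness of $\mathcal{L}(\ell,s,c)$ follows from closure of $\mathcal{E}$ under duality, and since the Kazhdan projection $p$ is by construction a limit of elements $p_n\in C_c(G)$ which may be taken of integral $1$, one simply sets $\rho=p_n$ for $n$ large; then, because $\pi(p)=\mathcal{P}_\pi$ vanishes on $E_\pi$, for every $\pi\in\mathcal{L}(\ell,s,c)$ one has $\bigl\lVert A_\pi^{\rho}\vert_{E_\pi}\bigr\rVert=\bigl\lVert\bigl(\pi(p_n)-\pi(p)\bigr)\vert_{E_\pi}\bigr\rVert\le\lVert p_n-p\rVert_{\mathcal{L}(\ell,s,c)}<\lambda$ for a uniform $\lambda<1$. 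This (possibly signed) $\rho$ is precisely what conditions (b) and (c) are phrased to accommodate; your first, abandoned sketch (``take $\rho\in C_c(G)$ close to $p$'') was the right one, and the detour through admissible measures and uniform convexity is where the proposal breaks.
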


\begin{remark}\normalfont 
The main difference in comparison to Theorem \ref{theorem: main theorem in text spectral gaps and projections}, is that the Markov operators are defined by
signed measures. It is unclear if $\rho$ can always be chosen to be non-negative in this setting.
In all the cases in which Kazhdan projections have been constructed in the small exponential growth algebras, they are in fact 
limits of positive functions with compact support and of integral $1$ \cite{lafforgue-duke,liao,delasalle}.
\end{remark}

\begin{proof}  \eqref{theorem: Laf item i-a} $\Rightarrow$ \eqref{theorem: Laf item i-b}. 
The fact that $\mathcal{L}(\ell,s,c)$ is complemented follows from the fact that $\mathcal E$ is closed under duality.
There exist $p_n \in C_c(G)$ of integral $1$ converging to $p$ in $C_{\mathcal{L}(\ell,s,c)}(G)$. Therefore, one can choose $\rho = p_n$ for $n$ large enough.

\eqref{theorem: Laf item i-b} $\Rightarrow$ \eqref{theorem: Laf item i-c} is proved exactly as the similar implication in 
Theorem \ref{theorem: main theorem in text spectral gaps and projections}.

\eqref{theorem: Laf item i-c} $\Rightarrow$ \eqref{theorem: Laf item i-a}  is straightforward.

\eqref{theorem: Laf item ii} Using the equivalence \eqref{theorem: Laf item i-a} $\Leftrightarrow$ \eqref{theorem: Laf item i-c} 
and the argument in the proof of \ref{thm:casm}, one obtains the required conclusion. 
\end{proof}


In the case of isometric representations we have the following  characterization of the existence of Kazhdan projections. 

\begin{theorem}\label{theorem: Kazhdan projections in algebras defined via a class of reps}
Let $\mathcal{F}$ be a family of isometric representations of a locally compact group $G$ on a uniformly convex family $\mathcal{E}$. 
There exists a Kazhdan projection in $C_{\mathcal{F}}(G)$ if and only if the family $\mathcal{F}$ has a spectral gap.

Moreover, $p$ is always a limit of a sequence of positive probability measures.
\end{theorem}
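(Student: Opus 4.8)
The plan is to prove the two implications of the equivalence and to read off the ``moreover'' clause from the first of them. Throughout I use that an isometric representation on a uniformly convex (hence reflexive) Banach space is complemented, so that for every $\pi\in\mathcal{F}$ we have $E=E^\pi\oplus E_\pi$ and the projection $\mathcal{P}_\pi$ onto $E^\pi$ along $E_\pi$ is defined; I also use that each $\pi\in\mathcal{F}$ extends to a contractive homomorphism $C_{\mathcal{F}}(G)\to\mathcal{B}(E)$, still written $\pi$, that $\pi(f)=\int_G f(g)\pi_g\,d\eta(g)$ for $f\in C_c(G)$, and that this assignment is multiplicative for convolution.

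\emph{Spectral gap $\Rightarrow$ Kazhdan projection, and the ``moreover'' clause.} Let $(Q,\kappa)$ be a Kazhdan pair for $\mathcal{F}$ and fix a continuous $Q$--admissible probability measure $\mu$ (such exist), with density $\rho=d\mu/d\eta\in C_c(G)$, so that $\pi(\rho)=A_\pi^\mu$ and $\pi(\rho^{\ast k})=(A_\pi^\mu)^k$. By Theorem \ref{theorem: spectral gap to markov<1 estimate} there is $\lambda=1-\tfrac{2}{M_\mu}\delta_{\mathcal{E}}(\kappa)<1$, independent of $\pi$, with $\|A_\pi^\mu|_{E_\pi}\|\le\lambda$ for every $\pi\in\mathcal{F}$; Theorem \ref{theorem: definition of the projection onto invariant vectors} then gives $\|(A_\pi^\mu)^k-\mathcal{P}_\pi\|\le\lambda^k$ uniformly over $\mathcal{F}$. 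Hence $\|\rho^{\ast k}-\rho^{\ast m}\|_{\mathcal{F}}=\sup_{\pi}\|(A_\pi^\mu)^k-(A_\pi^\mu)^m\|\le\lambda^k+\lambda^m$, so $(\rho^{\ast k})_k$ is Cauchy in $C_{\mathcal{F}}(G)$ and converges to some $p$ with $\|\rho^{\ast k}-p\|_{\mathcal{F}}\le\lambda^k$; in particular $\pi(p)=\lim_k(A_\pi^\mu)^k=\mathcal{P}_\pi$ for every $\pi\in\mathcal{F}$. It remains to check that $p$ is a central idempotent. For idempotency, $\pi(p^2-p)=\mathcal{P}_\pi^2-\mathcal{P}_\pi=0$ for every $\pi$, and since $\|\cdot\|_{\mathcal{F}}$ is a norm on $C_{\mathcal{F}}(G)$ this forces $p^2=p$. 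For centrality, each $\pi_g$ preserves $E^\pi$ and $E_\pi$, hence so does $\pi(f)$ for $f\in C_c(G)$; an operator preserving both the range and the kernel of a bounded idempotent commutes with it, so $\mathcal{P}_\pi\pi(f)=\pi(f)\mathcal{P}_\pi$, i.e. $\pi(pf-fp)=0$ for all $\pi$, whence $pf=fp$, and density of $C_c(G)$ with continuity of multiplication yields centrality. Thus $p$ is a Kazhdan projection; being the limit of the densities $\rho^{\ast k}$ of the probability measures $\mu^{\ast k}$, it is a limit of positive probability measures. Finally, any two Kazhdan projections coincide (if $\pi(p)=\pi(p')$ for all $\pi\in\mathcal{F}$ then $\|p-p'\|_{\mathcal{F}}=0$), so this holds for \emph{the} Kazhdan projection.

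\emph{Kazhdan projection $\Rightarrow$ spectral gap.} This direction carries the real content. Let $p\in C_{\mathcal{F}}(G)$ be a Kazhdan projection and pick $f\in C_c(G)$ with $\|f-p\|_{\mathcal{F}}<\tfrac14$, so that $\|\pi(f)-\mathcal{P}_\pi\|<\tfrac14$ for every $\pi\in\mathcal{F}$. Evaluating on the trivial representation (which lies in $\mathcal{F}$ by the standing assumption) gives $|c-1|<\tfrac14$ for $c=\int_G f\,d\eta$; in particular $f\ne 0$, so $\|f\|_1:=\int_G|f|\,d\eta>0$. Fix now $\pi\in\mathcal{F}$ and a unit vector $v\in E_\pi$. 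Since $\mathcal{P}_\pi v=0$ we get $\|\pi(f)v\|<\tfrac14$, hence $\|v-\pi(f)v\|>\tfrac34$; writing $v-\pi(f)v=(1-c)v+\int_G f(g)(v-\pi_g v)\,d\eta(g)$ and using $|1-c|<\tfrac14$ gives $\|\int_G f(g)(v-\pi_g v)\,d\eta(g)\|>\tfrac12$, while this norm is at most $\|f\|_1\sup_{g\in\supp f}\|v-\pi_g v\|$. Therefore $\sup_{s\in\supp f}\|v-\pi_s v\|>\tfrac{1}{2\|f\|_1}$, and since $\supp f$ is compact and the constant is independent of $\pi$ and of $v$, the pair $(\supp f,\tfrac{1}{2\|f\|_1})$ is a Kazhdan pair for $\mathcal{F}$; hence $\mathcal{F}$ has a spectral gap.

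\emph{The main obstacle.} The first implication is essentially an assembly of Theorems \ref{theorem: spectral gap to markov<1 estimate} and \ref{theorem: definition of the projection onto invariant vectors}, the only conceptual point being that uniform operator-norm convergence of $(A_\pi^\mu)^k$ over $\mathcal{F}$ is precisely norm convergence of $\rho^{\ast k}$ in $C_{\mathcal{F}}(G)$. The difficulty lies in the converse, where the purely qualitative existence of $p$ must be converted into a \emph{uniform}, quantitative Kazhdan constant. The device of approximating $p$ by a compactly supported $f$ and using $\supp f$ as a Kazhdan set with constant comparable to $\|f\|_1^{-1}$ does the job, but requires care because $f$ need be neither positive nor of integral one; this is exactly where the assumption that $\mathcal{F}$ contains the trivial representation is used, both to pin down $\int_G f$ and to guarantee $f\ne 0$.
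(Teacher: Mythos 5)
Your proof is correct. The forward implication (spectral gap $\Rightarrow$ existence of $p$, plus the ``moreover'' clause) is essentially the paper's own argument: choose an admissible density $\rho$ supported around a Kazhdan set, use Theorems \ref{theorem: spectral gap to markov<1 estimate} and \ref{theorem: definition of the projection onto invariant vectors} to see that $\{\rho^{\ast k}\}$ is Cauchy in $\Vert\cdot\Vert_{\mathcal{F}}$, and identify the limit with the Kazhdan projection; your explicit checks of idempotency ($\pi(p^2-p)=0$ for all $\pi$) and centrality (block-diagonality with respect to $E^\pi\oplus E_\pi$) are if anything cleaner than the paper's terse remarks. Where you genuinely diverge is the converse. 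The paper does not argue it directly: it invokes Theorem \ref{theorem:Lafforgue}\,\eqref{theorem: Laf item ii}, i.e.\ it approximates $p$ by some $\rho\in C_c(G)$ of integral $1$, obtains geometric convergence $\Vert\rho^n-p\Vert\le\lambda^n$, and then runs the summable-speed telescoping argument of Theorem \ref{thm:casm} to produce the Kazhdan pair $\left(\supp\rho,\tfrac{1-\lambda}{a_\rho}\right)$. You instead give a one-step perturbation argument: approximate $p$ within $\tfrac14$ by an arbitrary $f\in C_c(G)$, use the trivial representation (the standing assumption of the section) to pin $\int_G f\,d\eta$ near $1$, and read off the Kazhdan pair $\left(\supp f,\tfrac{1}{2\Vert f\Vert_1}\right)$ directly. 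This is more elementary — no iterated Markov operators, no summable-convergence machinery, and no need to arrange the approximant to have integral exactly $1$ (a point the paper's route glosses over) — at the cost of producing a less structured Kazhdan constant than the $\tfrac{1-\lambda}{a_\rho}$ the paper's route yields. Both arguments are sound, and yours is self-contained within the theorem's hypotheses.
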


\begin{proof} 
Let $Q$ be a Kazhdan set for $\mathcal F$, and let $\rho\in C_c(G)$ be a density function of a continuous admissible measure $\mu$ with respect to $Q$. It 
suffices to show that $\set{\rho^k}_{k\in \NN}$ is a Cauchy sequence with respect to the norm $\Vert \cdot \Vert_{\mathcal{F}}$.

For any representation $\pi\in \mathcal{F}$ and $m<n$ we again have that the image of $I_E-\left(A_\pi^\mu\right)^n$ is in $E_\pi$ and
\begin{align*}
\left\lVert \pi\left(\rho^m\right)-\pi\left(\rho^n\right)\right\rVert_{{\mathcal{B}}(E)} &= \left\lVert \left(A_\pi^\mu\right)^m \left(I_E-\left(A_\pi^\mu\right)^{n-m}\right)\right\rVert\\
&\le \left\lVert A_\pi^\mu\vert_{E_\pi}\right\rVert^m \left\lVert I_E-\left(A_\pi^\mu\right)^{n-m}\right\rVert\\
&\le 2 \left\lVert A_\pi^\mu\vert_{E_\pi}\right\rVert^m
\end{align*}
Since the last term satisfies a uniform estimate  $\left \lVert A_\pi^\mu\vert_{E_\pi}\right\rVert^m\le \lambda^m $ for some $\lambda\in (0,1)$ 
and every $\pi\in \mathcal{F}$, the sequence $\rho^k$ is indeed a Cauchy sequence
in the $\mathcal{F}$ norm,  as claimed, $\left\lVert \rho^m-\rho^n\right\rVert_{\mathcal{F}} \le \lambda^m$.
There exists a limit, denoted $p$, which is the required Kazhdan projection.
Indeed, a similar estimate as above shows that $\Vert \rho^{2m} - \rho^{m}\Vert_{\mathcal{F}} \le 2\Vert \rho^m\Vert_{\mathcal{F}}\to 0$,
hence $p$ is an idempotent. Finally, since $\pi(p)$ commutes with $\pi(g)=\pi_g$ for every $g\in G$ and every $\pi$ in $\mathcal{F}$,
we see that $p$ is central.

The converse follows from Theorem \ref{theorem:Lafforgue}, \eqref{theorem: Laf item ii}. It was also proved in \cite{lafforgue-duke}.
\end{proof}

As an application, we have the following generalization of the existence of Kazhdan projections to the class of $L_p$-maximal group algebras.
\begin{corollary}\label{corollary: Kazhdan projection and uniform (TE)}
If $\mathcal{F}$ is the family of all isometric representation of $G$ on $E$ then 
then there exists a Kazhdan projection in $C_{\mathcal{F}}(G)$ if and only if $G$ has uniform property $(TE)$.

In particular, if $G$ has Kazhdan's property $(T)$ then for every $1<p<\infty$ there exists a Kazhdan projection
in the $L_p$-maximal group algebra $C_{\max}^p( G)$.
\end{corollary}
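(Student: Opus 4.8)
The plan is to read off both assertions from Theorem \ref{theorem: Kazhdan projections in algebras defined via a class of reps} by unwinding the definitions involved, so no genuinely new argument is needed.

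For the first assertion, I would take $\mathcal{F}$ to be the family of all isometric representations of $G$ on the uniformly convex space $E$. Since $E$ alone constitutes a uniformly convex family and $\mathcal{F}$ contains the trivial representation on $E$, the hypotheses of Theorem \ref{theorem: Kazhdan projections in algebras defined via a class of reps} are met (the norm bound needed to form $C_{\mathcal{F}}(G)$ is automatic, every element of $\mathcal{F}$ being isometric). That theorem then says that a Kazhdan projection exists in $C_{\mathcal{F}}(G)$ precisely when $\mathcal{F}$ has a spectral gap, and by the second clause of Definition \ref{TEU} this latter condition is, by definition, uniform property $(TE)$. This gives the stated equivalence directly.

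For the ``in particular'' part, fix $p \in (1,\infty)$ and let $\mathcal{E}$ be the class of all $L_p$-spaces $L_p(X,\nu)$; by Clarkson's inequalities $\mathcal{E}$ is uniformly convex, with modulus of convexity depending only on $p$. Let $\mathcal{F}$ be the family of all isometric representations of $G$ on spaces in $\mathcal{E}$, so that $C_{\mathcal{F}}(G) = C_{\max}^p(G)$ by Definition \ref{def:cf}. By Theorem \ref{theorem: Kazhdan projections in algebras defined via a class of reps} it then suffices to verify that $\mathcal{F}$ has a spectral gap. From \cite{bfgm}, property $(T)$ implies that each individual isometric representation of $G$ on an $L_p$-space has a spectral gap; I would upgrade this to a uniform spectral gap by observing that $\mathcal{F}$ is closed under $\ell_p$-direct sums: an $\ell_p$-sum of isometric representations on $L_p(X_i,\nu_i)$ is again an isometric representation, on $L_p(\bigsqcup_i X_i)$, which still lies in $\mathcal{E}$, and the canonical complement of such an $\ell_p$-sum is the $\ell_p$-sum of the canonical complements. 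Hence the remark following Definition \ref{definition: spectral gaps} applies and produces a common Kazhdan pair for $\mathcal{F}$, so $\mathcal{F}$ has a spectral gap and $C_{\max}^p(G)$ contains a Kazhdan projection.

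The only step that is not purely formal is precisely this passage from a spectral gap for each single $L_p$-representation to a \emph{uniform} spectral gap for the whole family: without it Theorem \ref{theorem: Kazhdan projections in algebras defined via a class of reps} does not apply, and one would obtain merely the projections $\mathcal{P}_\pi$ separately rather than a single element of $C_{\max}^p(G)$ inducing all of them. I expect the direct-sum argument above to settle it cleanly; alternatively the uniformity can be extracted from the proof in \cite{bfgm} itself, where the Kazhdan constant produced for a representation on $L_p(X,\nu)$ depends only on $p$ and on a Kazhdan constant for the unitary representations of $G$, through the Mazur map comparison with the Hilbert space case.
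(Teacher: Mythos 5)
Your proposal is correct and follows essentially the same route as the paper: the equivalence is read off from Theorem \ref{theorem: Kazhdan projections in algebras defined via a class of reps} together with the definition of uniform property $(TE)$, and the $L_p$ case reduces to the fact from \cite{bfgm} that property $(T)$ yields a Kazhdan pair valid uniformly for all isometric $L_p$-representations (depending only on $p$ and a unitary Kazhdan pair), which is exactly how the paper handles it. Your direct-sum upgrade of individual spectral gaps to a uniform one is also consistent with the paper's own remark following Definition \ref{definition: spectral gaps}, so both justifications of the uniformity you offer match the paper's reasoning.
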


All the statements in Theorems \ref{theorem:Lafforgue} and \ref{theorem: Kazhdan projections in algebras defined via a class of reps} are also true, with the appropriate changes, in the more general case of uniformly bounded representations.

In the case of finitely generated groups  it was shown in \cite{delaat-delasalle} that property $(TE)$
is equivalent to the fact that for every isometric representation $\pi$ of $G$ on $E$ the projection onto $E^\pi$ is in the closure 
of $\setd{\pi(f)}{\supp f<\infty}\subseteq \mathcal{B}(E)$.

\subsection{Relations between versions of property $(T)$ for Banach spaces}\label{section: relations between different notions of T}
The above results have another important consequence: for uniformly convex Banach spaces they
put on the common ground the reinforced Banach property $(T^{Ban}_{\mathcal{E}})$, introduced by V.~Lafforgue in \cite{lafforgue-duke}
and the properties $(TE)$ and $FE$, introduced in \cite{fisher-margulis,bfgm}. We refer to \cite{nowak-survey} for a survey of these properties.
The question of 
such a relation was considered by several experts. In particular, it appeared as Question 1.10 in \cite{chatterji-drutu-haglund}.

To make a direct comparison consider a uniformly convex family $\mathcal{E}$ of Banach spaces, closed under duality and complex conjugation, 
and the class $\mathcal{F}$ of all isometric 
representations of $G$ on all $E\in \mathcal{E}$.

Recall that $G$ has property $FE$ for a Banach space $E$ if every continuous affine isometric action of $G$ on $E$ has a fixed point \cite{fisher-margulis,bfgm}.
Equivalently, $H^1(G,\pi)=0$ for every isometric representation $\pi$ of $G$ on $E$. We will say that $G$ has property $F\mathcal{E}$ for a family of Banach 
spaces $\mathcal{E}$ if $G$ has property $FE$ for every $E\in \mathcal{E}$.

It follows from the results of this paper that we have the  implications:\\

\begin{tikzpicture}
[
    implies/.style={double,double equal sign distance,-implies},
    dot/.style={shape=circle,fill=black,minimum size=2pt,
                inner sep=0pt,outer sep=2pt},
]

\node at (0,0.5) { }; 
\node (LAF) at (0,3) {Lafforgue's $(T^{Ban}_{\mathcal{E}})$ };
\node (KPE) at (0,1) {Kazhdan projection in $C_{\mathcal{F}}(G)$};
\node (UTE) at (8,1) {uniform $(T\mathcal{E})$};
\node (FE) at (5,3) {F$\mathcal{E}$};
\node (TE) at (8,3) {$(T\mathcal{E})$};

\node at (4.7,1.3) {Theorem \ref{theorem: Kazhdan projections in algebras defined via a class of reps}};
\node at (3.1,3.3) {\cite{lafforgue-jta}};
\node at (6.3,3.3) {\cite{bfgm}};

\node at (0.8,2) {(by def.)};
\node at (8.8,2) {(by def.)};

\draw  [implies,double equal sign distance] (LAF) -- (KPE);
\draw  [implies-implies,double equal sign distance] (KPE) -- (UTE) ;
\draw  [implies,double equal sign distance] (UTE) -- (TE);
\draw  [implies,double equal sign distance] (LAF) -- (FE);
\draw  [implies,double equal sign distance] (FE) -- (TE);
\end{tikzpicture}

As mentioned earlier,  uniformity of property $(T\mathcal{E})$ is automatic if the class $\mathcal{E}$ is closed under taking infinite direct sums and then the vertical arrow on the right is an equivalence.
We also remark that Lafforgue's proof of the first
implication $(T^{Ban}_{\mathcal{E}}) \Longrightarrow F\mathcal{E}$ does in fact use linearly growing representations in an essential way \cite{lafforgue-jta}.

We also observe that the existence of a Kazhdan projection in $C_{\mathcal{F}}(G)$ does not in general imply $F\mathcal{E}$. The reason is  
the existence of hyperbolic groups with property $(T)$. Such groups all have $(TL_p)$ for every $1<p<\infty$, but for $p>2$ sufficiently large every hyperbolic group admits an
unbounded, or even a proper
affine isometric action on some $L_p$-space \cite{bourdon-pajot,yu,nica}.

\begin{remark}[Uniform property $FE$]\normalfont
We take this opportunity to remark that it is possible also to define uniform property $FE$, as we very briefly describe here. 
Consider a discrete group $G$. Property $FE$ is the same as vanishing of $H^1(G,\pi)$ for every isometric representation $\pi$ of $G$ on $E$. 
That is, the codifferential $\delta_\pi:E\to \ker d_\pi$ is onto,
where $\ker d_\pi$ is the space of 1-cocycles equipped with a norm induced by restricting the cocycle to the generating set  $S$ and viewing it as a function in 
$\ell_2(S)\otimes E=\ell_2(S;E)$. This on the other hand is equivalent to the adjoint map $\delta_\pi^*:(\ker d_\pi)^*\to E^*$ being bounded below, i.e.
$\left\lVert \delta^*\varphi \right\rVert\ge C_\pi \left\lVert \varphi\right\rVert$ for every $\varphi \in (\ker d_\pi)^*$ and a uniform $C_\pi>0$.
This fact is used extensively in \cite{bader-nowak,nowak} and we refer the reader to those articles for details.
We can now make the folllowing definition: \emph{$G$ has uniform property $FE$ if $G$ has property $FE$ and, additionally, there exists $C>0$ such that for each isometric representation $\pi$  of $G$ on $E$ we have  $C_\pi>C>0$}.
We note however that such a  uniform choice of $C_\pi$ appearing in the second condition is equivalent to having a uniform spectral gap for all isometric representations on $E$. Thus
uniform property $FE$ is simply property $FE$ with the additional condition that property  $(TE)$ (implied by $FE$) is satisfied uniformly.
\end{remark}

\subsection{Expanders and property $(\tau)$ for Banach spaces}\label{section: tau and expanders for Banach}

Existence of Kazhdan projections is particularly important in connection to the topic of expander graphs. Indeed, it is \textit{via} such projections that the main existing counter-example to the coarse Baum-Connes conjecture was constructed, using expanders \cite{higson-lafforgue-skandalis}. Our results allow to connect a  Banach space generalization of the notion of expanders to the existence of a Kazhdan projection.

The most natural way of defining expanders in the Banach space context is \textit{via} Poincar\'e inequalities. 

\begin{definition}\label{def:e-expanders}
Given a uniformly convex Banach space $E$, a sequence $\set{\Gamma_i}_{i\in\NN}$ of graphs is a sequence of $E$-{\emph{expanders}} if it satisfies a Poincar\'{e} inequality for $E$-valued functions uniformly; that is, there
exists a constant $\kappa>0$ such that  the {\emph{Poincar\'{e} inequality}}
$$\sum_{v\in \Gamma_i} \Vert f(v)-Mf\Vert_E^2\le \kappa \sum_{v\sim u} \Vert f(v)-f(u)\Vert^2_E$$
holds for every $f:\Gamma_i\to E$ for every $i\in\NN$.
\end{definition}

As in the classical case, a way of constructing expanders is by means of a version of property $(\tau)$ of A. Lubotzky \cite{lubotzky-book,lubotzky-notices}. Thus, let $G$ be a finitely generated group and $Q$ a finite subset of $G$. Assume that $G$
is residually finite, and consider a sequence ${\mathcal{N}}=\set{N_i}_{i\in \NN}$ of finite index normal subgroups, satisfying $\bigcap_{i\in\NN} N_i=\set{e}$.
Let $q_i:G\to G/N_i$ be the quotient map for every $i\in \NN$. For $v,u\in G/N_i$ we write $v\sim u$ if $v$ and $u$ are joined by 
an edge in the Cayley graphs $\operatorname{Cay}(G/N_i,q_i(Q))$.

Given a uniformly convex Banach space $E$, we denote by $\mathcal{N}(E)$ the family of representations $\pi^{\iip}$ of $G$ on the spaces
$\ell_2(G/N_i, E)$ given by $\pi^{\iip}_g f(v)=f(g^{-1}v)$.
The projection $E\to E^{\pi^{\iip}}$ is simply given by the average 
$M_i f=[G:N_i]^{-1}\sum_{v\in G/N_i} f(v)$.

\begin{definition}
Let $E$ be a uniformly convex Banach space. A residually finite finitely generated group $G$  has \emph{property $(\tau_E)$ with respect to 
$\mathcal{N}$} if the family
$\mathcal{N}(E)$ has a spectral gap.
\end{definition}

The following statement is an application of Theorem \ref{theorem in intro: spectral gaps and projections}
and shows that property $(\tau_E)$ gives the right kind of generalization of property $(\tau)$ to the setting of uniformly
convex spaces, and it yields a Kazhdan projection. 

\setcounter{aux1}{\value{section}}
\setcounter{aux2}{\value{theorem}}
\setcounter{section}{1}
\setcounter{theorem}{\value{expthm}}
\begin{theorem}\label{theorem: expanders theorem in the text}
Let $E$ be a uniformly convex Banach space, $G$ be a finitely generated residually finite group 
and let ${\mathcal{N}}=\set{N_i}$ a collection of finite index subgroups with trivial intersection.
The following conditions are equivalent:
\begin{enumerate}
\item $G$ has property $(\tau_E)$ with respect to ${\mathcal{N}}=\set{N_i}$ and a symmetric Kazhdan set $Q$; \label{enumi: tau_E in text}
\item the Cayley graphs $\operatorname{Cay}(G/N_i,Q)$ form a sequence of $E$-expanders;\label{enumi: E-expanders in text}
\item there exists a Kazhdan projection $p\in C_{\mathcal{N}(E)}(G)$.\label{enumi: kazhdan projection in text}
\end{enumerate}
\end{theorem}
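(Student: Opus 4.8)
The plan is to obtain all three equivalences by combining two routine translations with Theorem~\ref{theorem: Kazhdan projections in algebras defined via a class of reps}. The first step is to identify, for each $i$, the invariant subspace and its canonical complement for the representation $\pi^{(i)}$ on $\ell_2(G/N_i,E)$. Since $G$ acts transitively on $G/N_i$, the space $E^{\pi^{(i)}}$ consists exactly of the constant functions. A short computation with the contragredient representation $\overline{\pi^{(i)}}$ on $\ell_2(G/N_i,E^*)$ identifies its invariant functionals with the constant $E^*$-valued functions, so that $E_{\pi^{(i)}}$, being the annihilator of those, is the space of mean-zero functions $\{f:\sum_{v}f(v)=0\}$. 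Hence $\ell_2(G/N_i,E)=E^{\pi^{(i)}}\oplus E_{\pi^{(i)}}$, the corresponding projection onto $E^{\pi^{(i)}}$ is $f\mapsto M_if$ as recorded before the theorem, and in particular $\mathcal{N}(E)$ is a complemented family.

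For \eqref{enumi: tau_E in text}$\Leftrightarrow$\eqref{enumi: E-expanders in text} I would argue directly, without invoking the main theorem. For $f\in\ell_2(G/N_i,E)$ and $s\in G$, the identity $\pi^{(i)}_sM_if=M_if$ gives $f-\pi^{(i)}_sf=(f-M_if)-\pi^{(i)}_s(f-M_if)$, whence $\|f-\pi^{(i)}_sf\|_{\ell_2}^2=\sum_v\|f(v)-f(s^{-1}v)\|_E^2$, while $\|f-M_if\|_{\ell_2}^2=\sum_v\|f(v)-M_if\|_E^2$. Since $Q$ is finite, the supremum over $s\in Q$ of $\sum_v\|f(v)-f(s^{-1}v)\|_E^2$ is comparable, up to a factor bounded in terms of $\#Q$, to $\sum_{s\in Q}\sum_v\|f(v)-f(s^{-1}v)\|_E^2$; and reindexing $v\mapsto sv$ rewrites the latter as a Dirichlet-type sum over the edges of $\operatorname{Cay}(G/N_i,Q)$, each edge counted a bounded number of times. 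Thus the existence of a Kazhdan pair $(Q,\kappa)$ for $\mathcal{N}(E)$ is equivalent to the Poincar\'e inequality of Definition~\ref{def:e-expanders} holding uniformly in $i$, the two constants being related by a factor depending only on $\#Q$. This is the standard spectral-gap/Poincar\'e dictionary, written for $E$-valued functions.

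For \eqref{enumi: tau_E in text}$\Leftrightarrow$\eqref{enumi: kazhdan projection in text} I would simply apply Theorem~\ref{theorem: Kazhdan projections in algebras defined via a class of reps} to the family $\mathcal{F}=\mathcal{N}(E)$: it gives that a Kazhdan projection exists in $C_{\mathcal{N}(E)}(G)$ if and only if $\mathcal{N}(E)$ has a spectral gap, and by definition the latter is exactly property $(\tau_E)$ with respect to $\mathcal{N}$ — one may always replace a witnessing Kazhdan set by $Q\cup Q^{-1}\cup\{e\}$ to make it symmetric, which only decreases the Kazhdan constant. The one nonformal hypothesis to check before invoking that theorem is that $\mathcal{N}(E)$ consists of isometric representations on a \emph{uniformly convex} family of Banach spaces: the operators $\pi^{(i)}_g$ are visibly isometric, and since $E$ is uniformly convex each $\ell_2$-sum $\ell_2(G/N_i,E)$ has modulus of convexity bounded below by a function of $\delta_E$ alone, independently of $i$ — the classical fact that an $\ell_2$-direct sum of uniformly convex spaces sharing a common modulus is uniformly convex with a controlled modulus.

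The only step that is not pure bookkeeping is this last one, the uniform convexity of the family $\{\ell_2(G/N_i,E)\}_i$; the identifications of $E^{\pi^{(i)}}$ and $E_{\pi^{(i)}}$ and the constant-chasing in the Poincar\'e translation are elementary. I would also record, as anticipated in the excerpt, that when $E$ is a Hilbert space the norm $\|\cdot\|_{\mathcal{N}(E)}$ is a $C^*$-norm and $C_{\mathcal{N}(E)}(G)$ is a genuine $C^*$-algebra, so that \eqref{enumi: kazhdan projection in text} then specializes to the familiar picture of property $(\tau)$ via a Kazhdan projection.
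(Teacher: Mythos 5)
Your proposal is correct, and for the equivalence \eqref{enumi: tau_E in text}$\Leftrightarrow$\eqref{enumi: kazhdan projection in text} it coincides with the paper: both simply apply Theorem~\ref{theorem: Kazhdan projections in algebras defined via a class of reps} to the family $\mathcal{N}(E)$, after observing that the spaces $\ell_2(G/N_i,E)$ form a uniformly convex family with modulus controlled by $\delta_E$ alone (your phrasing of this point, via Day's theorem on $\ell_2$-sums, is if anything more careful than the paper's ``same modulus''). Where you genuinely diverge is \eqref{enumi: tau_E in text}$\Leftrightarrow$\eqref{enumi: E-expanders in text}: you prove both directions by the direct spectral-gap/Poincar\'e dictionary, comparing $\sup_{s\in Q}\|f-\pi^{\iip}_s f\|^2$ with the Dirichlet sum over edges up to factors depending only on $\#Q$, once $E^{\pi^{\iip}}$ and $E_{\pi^{\iip}}$ are identified as the constant and mean-zero functions. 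The paper treats \eqref{enumi: E-expanders in text}$\Rightarrow$\eqref{enumi: tau_E in text} as clear but proves the converse via the Markov-operator machinery (following Lafforgue): uniform convergence of $\pi^{\iip}(\rho^k)$ to $M_i$ from Theorem~\ref{theorem: main theorem in text spectral gaps and projections} gives a $k$ independent of $i$ with $\|\pi^{\iip}(\rho^k)-M_i\|\le\frac12$, whence $\|f-M_if\|\le 2\|f-\pi^{\iip}(\rho^k)f\|$, and the right-hand side is then bounded by an edge sum with a constant of the form $\#B(e,k)^3k^3$. Your route is more elementary and quantitatively sharper (a Poincar\'e constant of order $\#Q/\kappa^2$ rather than a ball-volume bound), and it does not use uniform convexity or the main theorem at all in that step, since the decomposition into constants plus mean-zero functions is automatic on finite quotients; what the paper's route buys is a uniform statement obtained as a direct illustration of the Kazhdan-projection/Markov-operator formalism that the rest of the paper develops. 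Your concluding remark that $C_{\mathcal{N}(E)}(G)$ is a $C^*$-algebra when $E$ is a Hilbert space matches the paper's closing observation.
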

\setcounter{section}{\value{aux1}}
\setcounter{theorem}{\value{aux2}}

Note that in the case when $E$ is a Hilbert space the algebra $C_{\mathcal{N}(E)}( G)$ is a $C^*$-algebra. In that case the existence of the Kazhdan projection appearing 
in the above theorem was hinted at in \cite{higson,higson-lafforgue-skandalis}.

\begin{remark}\normalfont
We also point out that in the setting of uniformly convex Banach spaces 
one more characterization of property $(\tau_E)$ is true, namely that $G$ has $(\tau_E)$ with respect to $\set{N_i}$ 
if and only if the cohomology
$H^1(G,\pi)=0$, where $\pi=\oplus \pi^{\iip}$. The proof given for Hilbert spaces in \cite{lubotzky-zuk} can be copied verbatim to the above setting as it uses 
only the Open Mapping Theorem and basic norm estimates.
\end{remark}


\section{Applications to Ergodic Theory}\label{sec:ergodic}

In this section, we study properties of a measure preserving ergodic action of a locally compact group $G$ on a probability space $(X, \nu)$. One of the first theorems in this setting, due to Oseledec \cite{oseledets}, states that the powers $\rho^k$ of an arbitrary density function of a probability measure $\mu$ on $G$ form an \emph{ergodic sequence}:
$$\pi(\rho^k)f \longrightarrow \int_X f \, d\nu(x),$$
both $\nu$-almost everywhere and in $L_p(X,\nu)$. See e.g. \cite{nevo} for an overview.

A special case of Theorem \ref{theorem in intro: spectral gaps and projections} is a quantitative ergodic estimate for actions with a spectral gap. Consider a collection $\mathcal E$ of uniformly convex  Banach spaces, and a number $p$ in $(1 , \infty )$. Theorem \ref{theorem in intro: spectral gaps and projections} implies that if  $(Q, \kappa )$ is a Kazhdan pair for the family $\mathcal{F}$ of isometric representations of $G$ on $L_p(X_,\nu;E)$, with $E\in {\mathcal E}$, induced by the measure preserving action of $G$, then for every $Q$--admissible probability measure $\mu$,
\begin{equation}\label{equation: convergence mean ergodic thm for a class F}
\left\Vert A_\pi^{\mu^k} f - \int_X f \, d\nu\right\Vert_p \le \lambda^k \Vert f\Vert_p,
\end{equation} where $\lambda<1$ depends only on $p$, the normalizing factor of $\mu$, the modulus of convexity of $\mathcal E$, and $\kappa$.

By a standard Borel--Cantelli argument, it follows that for almost every $x\in X$ there exists $k_x$ such that for $k\geq k_x$
$$
\left| A_\pi^{\mu^k} f(x) - \int_X f\right|\leq \lambda^{ k} \| f\|_p\, , 
$$ where $\lambda \in (0,1)$ is a constant slightly larger than the one in \eqref{equation: convergence mean ergodic thm for a class F}.

In the particular case of ${\mathcal E} =\{ \R \}$ and of a group $G$ with Kazhdan's property $(T)$, by Remark \ref{rmks:kazhdanlp}, \eqref{lp1}, the same measure $\mu$ on $G$ satisfies \eqref{equation: convergence mean ergodic thm for a class F} for all $p\in (1, \infty )$, with $\lambda \in (0,1)$ depending continuously on $p$ and converging to $1$ at $\infty$.

\medskip  

Such estimates can be used for instance for shrinking target problems of orbits of subgroups that have property $(T)$. 
The shrinking target problems are yet another way of understanding measure preserving ergodic actions of groups on (metric) measure spaces. They are particularly significant in the case of actions of subgroups $H$ of Lie groups $G$ on quotients $G/\Gamma$, where $\Gamma$ is a lattice in $G$. 
For semisimple Lie groups $G$ and finite volume  quotients $G/\Gamma$, the shrinking target problems can be classified following the position of the ``target'', which can be in the boundary at infinity or inside  $G/\Gamma$; or following the type of subgroup $H$ acting on $G/\Gamma $. Most existing results study actions of amenable subgroups $H$ of $G$ (most often, cyclic or one dimensional). 
The earliest results have been proved for targets at infinity and geodesic flows (i.e. actions of one dimensional subgroups $H$ composed of semisimple elements). The problem of finding the generic behavior of geodesic rays with respect to a shrinking target situated in a cusp was completely settled in \cite{sullivan:log ,  kleinbock-margulis:log}. The argument in \cite{kleinbock-margulis:log} uses theorems of Howe-Moore type and fast decay of
correlation coefficients, and the fact that the
characteristic function of a neighborhood of a cusp may be
replaced by a smooth function, without significant loss of information. 

When the ``target'' is not at infinity, but inside $G/\Gamma$, the problem becomes that
of finding the generic behavior of orbits of $H$ in terms of distance to
a fixed point; for instance, of finding the generic speed at which an orbit of $H$ approaches that point. In this case, the methods based on Howe-Moore theorems fail. Still, the generic behavior of geodesic rays with respect to a
point in locally symmetric spaces of rank one has been found in \cite{sullivan:log,maucourant}, using methods specific to rank one. The higher rank case remains
open. The question in full generality, of measuring the rate at which a typical orbit approaches a point in $G/\Gamma $, has been asked by Kleinbock 
and Margulis in \cite{kleinbock-margulis:log}.

Here we show that, as Theorem \ref{theorem in intro: spectral gaps and projections} provides a good way to average in a group $H$ with 
property $(T)$ (average that, in many ways, plays the part of the average on F\o lner sets for amenable groups), it also allows to answer 
shrinking target problems for orbits of subgroups $H$ of $G$ that have property $(T)$, in terms of the above mentioned average and, due to Remark \ref{rmks:kazhdanlp}, \eqref{lp1}, provide an estimate of the error term that is, in a way, the best possible.


Let $(Y, \nu 
)$ be a probability space. For every integrable function $f$ on $Y$ we denote by $Mf$ its mean, that is $Mf=\int_Y f \, d\nu$.  
Let $\Phi = \left\{ f_n : Y \to \R_+ \; ;\; n\in \N \right\}$ be a sequence of non-negative integrable functions on $Y$. 
For $N\in \N$ consider the partial sums of series
$$
\Sigma^N_{\Phi} (y) =\sum_{i=1}^N f_i (y) \ \ \ \ \ \ \text{ and } \ \ \ \ \ \ E^N_{\Phi} = \sum_{i=1}^N Mf_i.
$$    

\begin{lemma}[\cite{kochen-stone}, Chapter 1, Lemma 10 in \cite{sprindzuk}]\label{lemma:bc}
Let $Y$ be as above and let $p$ be a positive real number larger than $1$.
\begin{enumerate}
\item For $\mu$--almost every $y\in Y$ we have $\liminf_{N\to \infty } \frac{\Sigma^N_{\Phi} (y)}{E^N_{\Phi}} < \infty$.
\item Assume that $Mf_n\le 1$ for every $n\in \N$ and that there exists $C>0$ such that for every $N > M \ge 1$,
\begin{equation}\label{eq:bc-indep}
\int_Y \left|\sum_{i=M}^N f_i (y) - \sum_{i=M}^N Mf_i \right|^p \, d\mu \le C \sum_{i=M}^N Mf_i.
\end{equation}
Then for every $\varepsilon >0$ one has that for $\mu$--almost every $y\in Y$
$$\Sigma^N_{\Phi} (y) = E^N_{\Phi} + O\left( \left( E^N_{\Phi}\right)^{1/p} \log^{1+1/p + \varepsilon } \left( E^N_{\Phi}\right) \right).$$
\end{enumerate}
\end{lemma}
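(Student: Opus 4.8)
\emph{Part (i)} requires no quasi-independence and is soft. Since $\int_Y \Sigma^N_{\Phi}\,d\nu = E^N_{\Phi}$, Markov's inequality gives $\nu\!\left(\Sigma^N_{\Phi}>\lambda E^N_{\Phi}\right)\le \lambda^{-1}$ for every $\lambda>1$ and every $N$ with $E^N_{\Phi}>0$. The reverse Fatou lemma (valid because $\nu$ is finite), applied to the sets $D^\lambda_N=\{\Sigma^N_{\Phi}\le \lambda E^N_{\Phi}\}$, yields $\nu\!\left(\limsup_N D^\lambda_N\right)\ge 1-\lambda^{-1}$, and on $\limsup_N D^\lambda_N$ one has $\liminf_N \Sigma^N_{\Phi}/E^N_{\Phi}\le \lambda$. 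Letting $\lambda$ run through the positive integers $k$ and taking $L=\bigcup_k\limsup_N D^k_N$ gives $\nu(L)\ge \sup_k(1-k^{-1})=1$, and for $y\in L$ the $\liminf$ in question is finite. (If $E^N_{\Phi}$ stays bounded the assertion is anyway immediate: then $\sum_n Mf_n<\infty$, hence $\sum_n f_n<\infty$ $\nu$-a.e. by Tonelli, so $\Sigma^N_{\Phi}$ converges.)

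\emph{Part (ii)} is the G\'al--Koksma / Kochen--Stone argument. By the remark just made we may assume $E^N_{\Phi}\to\infty$. Set $R_N(y)=\Sigma^N_{\Phi}(y)-E^N_{\Phi}=\sum_{i\le N}\bigl(f_i(y)-Mf_i\bigr)$; the task is to bound $|R_N|$ $\nu$-almost everywhere. Because $Mf_n\le 1$, the non-decreasing sequence $E^N_{\Phi}$ increases in steps $\le 1$, so for each $h\ge 1$ we may choose $n_h=\max\{N:E^N_{\Phi}\le 2^h\}$, which satisfies $2^h-1<E^{n_h}_{\Phi}\le 2^h$ and $n_h\uparrow\infty$. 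Hypothesis \eqref{eq:bc-indep} with $M=1$, $N=n_h$ gives $\int_Y|R_{n_h}|^p\,d\nu\le C\,E^{n_h}_{\Phi}\le C\,2^h$; hence, for any $\delta>0$, Markov's inequality yields $\nu\!\left(|R_{n_h}|>2^{h/p}h^{(1+\delta)/p}\right)\le C\,h^{-1-\delta}$, which is summable in $h$, so Borel--Cantelli gives, for $\nu$-a.e.\ $y$ and all large $h$,
$$
|R_{n_h}(y)|\ \le\ 2^{h/p}h^{(1+\delta)/p}\ \asymp\ \bigl(E^{n_h}_{\Phi}\bigr)^{1/p}\bigl(\log E^{n_h}_{\Phi}\bigr)^{(1+\delta)/p}.
$$

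It remains to fill the gaps, i.e.\ to control $W_h:=\max_{n_{h-1}<N\le n_h}\bigl|R_N-R_{n_{h-1}}\bigr|$. The tool is a Rademacher--Menshov-type dyadic chaining performed \emph{in the variable $E^\bullet_{\Phi}$}: partition $(n_{h-1},n_h]$ into consecutive pieces of $E$-mass $\asymp 1$ (possible since $Mf_n\le 1$), of which there are $\asymp 2^h$, and form the binary tree of their dyadic unions, which has depth $\asymp h\asymp \log E^{n_h}_{\Phi}$. Each partial sum $\sum_{i=n_{h-1}+1}^{N}(\cdot)$ is, up to a remainder lying inside one finest piece, a sum of at most one tree-block per level; so by the triangle inequality in $L^p(Y,\nu)$ together with \eqref{eq:bc-indep} applied to each tree-block (the masses of the tree-blocks at a fixed level summing to the total mass $\asymp 2^h$), one gets $\Vert W_h\Vert_p\lesssim h\,2^{h/p}$, the finest-piece remainder being of strictly lower order and handled identically. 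Markov's inequality and Borel--Cantelli then give, $\nu$-a.e.\ and for large $h$, $W_h\lesssim 2^{h/p}h^{1+(1+\delta)/p}$. Combining with the displayed bound on $|R_{n_{h-1}}|$ and using $E^{n_{h-1}}_{\Phi}\le E^N_{\Phi}\le E^{n_h}_{\Phi}\asymp E^{n_{h-1}}_{\Phi}$ for $n_{h-1}<N\le n_h$, we obtain $|R_N|\lesssim \bigl(E^N_{\Phi}\bigr)^{1/p}\bigl(\log E^N_{\Phi}\bigr)^{1+1/p+\delta/p}$ for $\nu$-a.e.\ $y$; choosing $\delta<p\varepsilon$ finishes the proof.

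The one genuinely delicate point is this gap-filling estimate, and specifically the insistence that the dyadic decomposition be organized by the accumulated mean $E^\bullet_{\Phi}$ rather than by the index $N$: it is here that $Mf_n\le 1$ enters, bounding the number of chaining levels by $O(\log E^{n_h}_{\Phi})$, which is exactly what produces the exponent $1+1/p+\varepsilon$ on the logarithm. An index-based chaining would instead cost a factor $\log(n_h-n_{h-1})$ per block, which is uncontrollable when the $Mf_i$ are small and would destroy the estimate. Everything else is routine Markov-inequality-and-Borel--Cantelli bookkeeping; for full details one may consult Lemma~10 in Chapter~1 of \cite{sprindzuk} and \cite{kochen-stone}.
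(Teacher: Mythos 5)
Your proposal is correct and takes essentially the same route as the paper: the paper's proof is a one-line remark that one repeats Lemma 10 of \cite{sprindzuk} verbatim with the H\"older inequality in place of Cauchy--Schwarz, and your write-up is exactly that G\'al--Koksma-type argument (checkpoints where $E^N_{\Phi}$ doubles, Markov plus Borel--Cantelli, and dyadic chaining organized by accumulated mass for the gaps) carried out for $p$-th moments, together with a standard soft argument for part (i). The only step you leave implicit --- controlling the partial sum inside a finest piece, where \eqref{eq:bc-indep} cannot be applied to every truncation --- is handled by the nonnegativity of the $f_i$, exactly as in \cite{sprindzuk}, so nothing essential is missing.
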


\proof The proof follows verbatim the one of Lemma 10 in \cite{sprindzuk}, except that on top of page 48 one has to apply H\"older's inequality instead of the Cauchy-Schwartz inequality. \endproof


Let $G$ be a locally compact group, and $\Gamma $ a lattice in it. We consider $G/\Gamma$ endowed with the probability measure $\nu$ induced by the Haar measure on $G$, properly renormalized. 

We now consider another locally compact group $\Lambda$ with property $(T)$, $S$ a Kazhdan set of $\Lambda$, and $\mu$ a probability measure on $\Lambda$ admissible with respect to $S$. Assume that $\Lambda$ acts on $G/\Gamma$ by transformations preserving $\nu$, and that the action is ergodic.

\begin{examples}\label{ex-ergodic}\normalfont
\begin{enumerate}
\item When $G$ is a semisimple group and $\Gamma$ is an irreducible lattice in $G$, every infinite subgroup $\Lambda$ of $G$ acts ergodically on $G/\Gamma$ \cite{zimmer-book}.
\item When $G= \R^n$, $\Gamma = \Z^n$, with $n\geq 2$, every subgroup $\Lambda$ of  $SL_n(\Z )$ containing a matrix whose eigenvalues are not roots of unity acts ergodically on $G/\Gamma = {\mathbb{T}}^n$ \cite[Ex. 4.2.11]{katok-book}
\end{enumerate}
\end{examples}

Denote by $X_n$ the random variable representing the $n$-th step of the random walk on $\Lambda$ determined by the measure $\mu$. For every $x\in G/\Gamma $ we write $X_n (x)$ to signify the element in $G/\Gamma $ obtained by applying the group element $X_n$ in $\Lambda$ to $x$.

Let $p$ be a positive number larger than $1$, and let $\pi$ be the standard representation of the group $\Lambda$ by linear isometries on the Banach space $E=L^p \left(  G/\Gamma \right)$. We consider the action of $\Lambda$ on $E$ to the right, 
that is $g\cdot f = f \circ g$. In particular, for $f={\mathbb{1}}_\Omega$ the characteristic function of a measurable set $\Omega$, $g\cdot {\mathbb{1}}_\Omega = {\mathbb{1}}_{g^{-1} \Omega}\, $.

Because the action of $\Lambda $ is ergodic, the space $E^\pi$ is composed of constant functions, while $E_\pi$ is composed of functions of the form 
$f-Mf$, for every $f\in L^p \left(  G/\Gamma \right)$ (due to the fact that the dual of $L^p \left(  G/\Gamma \right)$ can be identified with $L^q \left(  G/\Gamma \right)$, where $\frac{1}{p}+ \frac{1}{q}=1$). As $\Lambda $ has property $(T)$, it follows, by Theorem \ref{theorem: main theorem in text spectral gaps and projections} and \cite[Theorem A]{bfgm}, that there exists $\lambda \in (0,1)$ depending on the Kazhdan constant of $S$ and on $p$, such that $\left\| A^{\mu}_\pi |_{E_\pi } \right\|\le \lambda\, $.

Let $h$ be a non-negative integrable function on $G/\Gamma$. For an arbitrary probability measure $\alpha$ on $\Lambda$, if we consider the function $f = A^{\alpha }_\pi \left( h \right)$ then by Fubini's Theorem we can write 
\begin{align*}
M(f) & = \int_{G/\Gamma } f(x)\, {\mathrm{d}}\nu (x) = \int_{G/\Gamma }\left[\int_\Lambda h \circ g (x)\, {\mathrm{d}}\alpha (g)\right] \, {\mathrm{d}}\nu (x)\\
 & = \int_\Lambda \left[\int_{G/\Gamma } h (g x)\,  {\mathrm{d}}\nu (x)\right]\, {\mathrm{d}}\alpha (g) = M (h )\, .   
\end{align*}

In particular, the above is true for the function $f = A^{\mu^n}_\pi \left( {\mathbb{1}}_\Omega \right)$, where $\Omega$ is a measurable set in $G/\Gamma$. Note that for $x\in G/\Gamma $, we have that
$$f(x) = {\mathbb{P}} \left( X_n (x) \in \Omega \right) .$$

Consider now a sequence $\left( \Omega_n \right)_{n\in \N }$ of measurable sets in $G/\Gamma \, $, and the sequences of measurable functions $h_n = {\mathbb{1}}_{\Omega_n}$ and $f_n = A^{\mu^n}_\pi \left( {\mathbb{1}}_{\Omega_n} \right)$. We prove that the hypotheses of Lemma \ref{lemma:bc} are satisfied. The left hand side of the inequality \eqref{eq:bc-indep} can be bounded as follows 
\begin{align*}
\left\| \sum_{i=M}^N \left( A^{\mu^i}_\pi h_i - M{h_i} \right)  \right\|_p & \le \sum_{i=M}^N \lambda^i \left\| h_i - M{h_i} \right\|_p\\
&\le \left(\sum_{i=M}^N \lambda^{qi}\right)^{1/q}\,  \left( \sum_{i=M}^N \left\| h_i - M{h_i} \right\|_p^p \right)^{1/p}\, .
\end{align*}
The first inequality above uses the property that on the subspace $E_\pi$, composed of functions of the form $f-Mf$, the norm of $A^{\mu^i}_\pi$ 
is bounded by $\lambda^i$, the second uses H\"older's inequality.

Elementary calculations yield the following inequality (see for instance \cite[$\S 4$]{drutu-mackay}) 
$$
|a-b|^p \leq a^p +b^p + \left( 1+ p2^p \right) \max \left(a^{p-1}b\, ,\, ab^{p-1} \right)\, \mbox{ for every } a,b\in \R_+\, .
$$

This allows us to write 
$$
\int |h_i - M{h_i}|^p {\mathrm{d}}\nu (x) \leq \int h_i^p {\mathrm{d}}\nu (x) + M{h_i}^p + C_p M{h_i} \int h_i^{p-1}{\mathrm{d}}\nu (x) \leq (2+C_p) M{h_i}\, ,$$
 where $C_p=1+ p2^p$. In the last inequality above we used the facts that 
$h_i$ is the characteristic function of a set, hence $h_i^\alpha = h_i$ for every $\alpha \geq 1$, and that $M{h_i}\in (0,1)$, therefore $M{h_i}^\beta \leq M{h_i}$ for every $\beta \geq 1$. 

We may therefore write
$$
\left\| \sum_{i=M}^N \left( A^{\mu^i}_\pi h_i - M{h_i} \right)  \right\|_p^p\leq \frac{2+C_p}{(1-\lambda^q)^{p/q}}\sum_{i=M}^N M{h_i}\, .
$$

Lemma \ref{lemma:bc} then implies the following.

\begin{theorem}\label{thm:RW}
Let $G$ be a locally compact group, and $\Gamma $ a lattice in it. Let $\set{\Omega_n}$ be a sequence of measurable subsets in $G/\Gamma$. 

Assume that a locally compact group $\Lambda$ with property $(T)$ acts ergodically on $G/\Gamma$. Let $\mu$ be a probability measure on $\Lambda$ that is admissible with respect to a Kazhdan set, and let $X_n$ be the random variable representing the $n$-th step of the random walk defined by $\mu$.
\begin{enumerate}
\item If $\sum_n \nu (\Omega _n)$ is finite then for almost every $x\in G/\Gamma$
$$\sum_{n\in \N }\mathbb{P} \left( X_n (x) \in \Omega_n \right) < \infty .$$
\item If $\sum_n \nu (\Omega _n)$ is infinite then for every $\varepsilon >0$ and 
for almost every $x\in G/\Gamma$, 
$$\sum_{n\le N}{\mathbb{P}} \left( X_n (x) \in \Omega_n \right) = S_N + O(S_N^{\varepsilon})\, , $$ where
 $S_N = \sum_{n\le N} \nu (\Omega _n)$.
In particular, 
$${\mathbb{P}} \left( X_n(x) \in \Omega_n \right) > \nu (\Omega _n) - O\left( \nu ( \Omega _n )^{\varepsilon} \right)  \, 
$$ for infinitely many $n\in \N$.  
\end{enumerate}
\end{theorem}

A particular case of the above is when $\Omega_n$ is the ball around an arbitrary fixed point $x_0$ and of radius $r_n$. In that case, for $D$ the dimension of $G/\Gamma$ and $\dist$ a distance on $G/\Gamma$ induced by a left invariant Riemannian distance on $G$,   
 if $\sum_n r_n^D$ is infinite, then for every $\varepsilon >0$, almost every $x\in G/\Gamma$ satisfies 
$$\sum_{n\le N}{\mathbb{P}} \left( \dist(X_n (x), x_0) \le r_n \right) = R_N + O\left( R_N^{\varepsilon} \right),$$
where  $R_N = \sum_{n\le N} r_n^D$. While if $\sum_n r_n^D$ is finite then the above sum of probabilities is also finite.

Since $\Lambda$ has property $(T)$, it is compactly generated. Let $\dist_\Lambda$ be an arbitrary word metric on it, corresponding to the choice of a compact generating set. Let $\alpha$ be a probability measure on $\Lambda$ with compact support generating $\Lambda$ as a semigroup, and let $X_n$ be the corresponding random walk. 
Since $\Lambda$ is non-amenable, according to \cite{kesten,guivarch} there exists $a >0$ such that almost surely 
\begin{equation}\label{linearA}
\lim_{n\to \infty } \dfrac{\dist_\Lambda (e, X_n)}{n}=2a.
\end{equation}

This implies that 
$$\mu^n\left( \Delta_n \right) \geq 1- \varepsilon_n,$$ 
where $\Delta_n =\setd{g\in \Lambda}{\dist_\Lambda (e,g) >a n}$ and $\lim_{n\to \infty } \varepsilon_n =0$.

Consider the sequence of probability measures $\eta_n = \dfrac{1}{\mu^n \left( \Delta_n \right)} \mu^n|_{\Delta_n }$. The previous argument, with the sequence $\mu^n$ replaced by $\eta_n$, gives the same results as above, but with ${\mathbb{P}} \left( X_n (x) \in \Omega_n \right)$, replaced by ${\mathbb{P}} \left( X_n (x) \in \Omega_n , \dist_\Lambda (X_n, e)\geq an  \right)$, respectively with ${\mathbb{P}} \left( \dist(X_n (x), x_0) \le r_n \right)$ replaced by ${\mathbb{P}} ( \dist(X_n (x), x_0) \le r_n ,$\\ $ \dist_\Lambda (e , X_n )\geq an )$. 

\medskip

We now provide a sample of applications of Theorem \ref{thm:RW}, to emphasize the type of information this theorem provides.

As recalled in the beginning of the section, most of the previously known shrinking target results concerned geometric objects such as geodesics and, later on, unipotent orbits, also called horocycles in the literature. Our results allow to provide information about another significant set of geometric objects: the horospheres. 

Recall that in a CAT(0) space $X$, every geodesic ray $\rho :[0,\infty )\to X$ defines a {\emph Busemann function} $f_\rho (x) = \lim_{t\to \infty } \left[ \dist (x,\rho (t)) - t \right]$. Its open sublevel sets $f_\rho < a$, with $a\in \R$, coincide with the union of open balls $B(\rho (t), t + a )$ and are called {\emph{open horoballs determined by $\rho $}}; the topological boundary of an open horoball is a level hypersurface $f_\rho = a$, and it is called {\emph{horosphere}}. Two geodesic rays at finite Hausdorff distance determine the same collection of horoballs and horospheres, possibly corresponding to different parameters $a\in \R$  \cite{BridsonHaefliger}. The point at infinity $\rho (\infty )\in \partial_\infty X$ is called {\emph{the point at infinity of the horoball (horosphere)}}.  

In the particular case when $X$ is a symmetric space of non-compact type and of rank at least two, it is known that its group of isometries $G$ does not act transitively on $\partial_\infty X$: the quotient $\partial_\infty X/G$ can be identified with a spherical simplex of maximal dimension in the spherical building structure of $\partial_\infty X$. The image of $\rho (\infty )$ in $\partial_\infty X/G$ is called the {\emph{slope}} of $\rho$ (respectively of the point at infinity of the horoball/horosphere). When $\rho (\infty )$ is one of the vertices of the simplex $\partial_\infty X/G$, we say it is {\emph{maximal singular}}. For more information we refer the reader to \cite{BridsonHaefliger}.  

In rank one symmetric spaces the stabilizers of horospheres are nilpotent groups, in higher rank on the other hand they are more complicated, with semisimple subgroups and unipotent radicals. Moreover the horospheres have explicit descriptions in terms of flags from the flag boundary (see for instance \cite[$\S 3$]{drutu-quadr} for descriptions of horospheres and their stabilizers in some particular cases). 

As mentioned before, while for amenable groups, in all the von Neumann type theorems or Birkhoff type theorems the right approach is to consider limits, either in $L^2$--norm or pointwise, of averages on F\o lner sets (in particular on balls, when the groups have sub-exponential growth), in the opposite case of property (T) groups it seems more appropriate to consider limits of averages with respect to sequences of measures approximating the Kazhdan projection. Many stabilizers of horospheres in higher rank have property (T) (in particular those we consider below), therefore the averages provided by the sums in Theorem \ref{thm:RW} are an appropriate approach for them. 

With these observations, we proceed to examine the particular case when $G = SL(s, \R)$, with $s\geq 3$, and $\Gamma = SL(s, \Z )$. The symmetric space corresponding to $G$ is $\calp_s = SO(s) \backslash SL(s, \R )$, which can be identified to the space of positive definite quadratic forms of volume $1$ with the action of $G$ by isometries to the right. Consider the locally symmetric space $\calt_n = \calp_n / \Gamma$ and $\pi : \calp_n \to \calt_n$ the natural projection. In the applications of Theorem \ref{thm:RW}, there are two types of choices involved:

\begin{enumerate}

\item\label{chx1} the choice of the sequence of shrinking sets $(\Omega_n)$; 

\medskip

\item\label{chx2} the choice of a Kazhdan set $Q$ for the acting group $\Lambda$, and of a measure $\mu$, admissible with respect to $Q$, determining the random walk. 

\end{enumerate}  

As far as the choice \eqref{chx1} is concerned, we only consider two cases:

\textbf{[$\Omega $.A]} Consider as basepoint $x_0$ in $\calt_s$ the image by $\pi $ of the canonical quadratic form $\q_0 (x) = x_1^2 + \cdots + x_s^2$ on $\R^s$, and consider the standard metric defined on $\calp_s$, and the metric it induces on $\calt_s $ (see \cite{drutu-transf, drutu-quadr}).

Given $D$ the dimension of $\calp_s$ and $\calt_s$, consider $r_n = \frac{1}{n^{1/D}}$, and $B_n$ the ball in $\calp_s$ centered in $\q_0$ and of radius $r_n$. For $n$ large enough both $B_n$ and $\pi (B_n)$ have measure $\asymp \frac{1}{n}$. 

The fact that a quadratic form $\q $ satisfies $\pi (\q ) \in \pi( B_n )$ implies that there exists a basis $\{ v_1, \dots , v_n \}$ of $\Z^n$ that is orthogonal with respect to the bilinear form defined by $\q$, and such that $\q (v_i)= \lambda_i\in \left[ e^{-r_n}, e^{r_n} \right]$. For simplicity we replace  $\pi (B_n)$ with a set $\Omega_n \subset \calt_s$ of approximately the same volume, defined by the condition that a quadratic form $\q $ satisfies $\pi (\q ) \in \Omega_n$ if there exists $\gamma \in \Gamma $ such that for every vector $v\in \R^s$, $\q (v) \in \left[ e^{-r_n}, e^{r_n} \right] \|\gamma v \|^2$. 
In particular $\left| \q (v) - \|\gamma v \|^2 \right\|\leq (r_n + o(r_n)) \|\gamma v \|^2$. We call such a form an $r_n$--{\emph{ almost rational form.}} 

A consequence of almost rationality is for instance an estimate for the convergence to zero of the gaps between successive values in $\q (\Z^n)$, for an irrational positive definite quadratic form $\q $. It was conjectured by Davenport and Lewis \cite{DavLew} that for such a form given $s\in \q (\Z^n)$ and $n(s)$ the minimum of $\q (\Z^n) \cap (s, \infty )$, the gap $n(s)-s$ converges to $0$ as $s\to \infty$. The conjecture was settled by Bentkus and G\"otze for $s\geq 5$ \cite{bentkusgotze, gotze}. For an $r_n$--almost rational form, for every $\epsilon >0 $ the value $s_0$ such that $n(s) -s < \epsilon$ for $s\geq s_0$ must satisfy $s_0 \geq \frac{1-\epsilon }{2}n^{1/D}$.

\medskip

\textbf{[$\Omega $.B]} Consider one of the Busemann functions defined in \cite[$\S 3.2$]{drutu-quadr}, say $f_{r_1} (\q )= \sqrt{\frac{s}{s-1}} \ln \q (e_s)$. Its restriction to $\calt_s$ seen as a fundamental domain in $\calp_s$ is also a Busemann function, corresponding to a geodesic ray contained in $\calt_s$ \cite[$\S 3.6$]{drutu-quadr}. Let us denote the latter Busemann function $\bar{f}$, and let $(\Omega_n )$ be a sequence of thinner and thinner slices of level sets, defined by $-r_n - \epsilon_n \leq \bar{f} \leq -r_n$. A standard calculation of volume implies that in order to obtain a slice of volume $\frac{1}{n}$, one has to choose $\epsilon_n = \frac{1}{n^\delta }$ for some $\delta \in (0,1)$ and $r_n= \eta \ln n$, where $\eta $ depends on $s$, on the choice of $\delta $ and of the ray defining the Busemann function.

\medskip

In relation with the choice \eqref{chx2}, we begin by recalling that any symmetric set not contained in a closed amenable subgroup is a Kazhdan set for $\Lambda$ \cite[Theorem C]{shalom-kazhdan}. We shall mainly consider the following types of choices:

\medskip

\textbf{[$Q $.1]} The group $\Lambda$ is $G$, and $Q$ is a compact set generating $G$ (possibly a ball for some left-invariant metric on $G$).

\medskip

\textbf{[$Q $.2]} The group $\Lambda$ is the stabilizer of a horosphere in the symmetric space $X$ associated to $G$, with basepoint of maximal singular slope, and $Q$ is a compact set generating it. 

Note that as the action of $G$ on $X$ is to the right, as defined in \cite[$\S 3$]{drutu-quadr}, when $\Lambda$ is the stabilizer of a horosphere $f_\rho =0$, for an arbitrary quadratic form $\q$, $\q \Lambda$ is the horosphere $f_\rho =a$ containing $\q$, and $\q \Lambda g$, for $g\in G$, is the image by the isometric action of $g$ of the horosphere $f_\rho =a$; for every $n\in \N$, $\q Q^n g$ is a large compact subset of the latter.  

In what follows we describe how various choices produce different significant results:

Choices \textbf{[$Q $.1]}  and \textbf{[$\Omega $.A]}: Given a random walk on the group $G$, the probabilities that at step $n\in [1,N]$ the corresponding quadratic form is $\frac{1}{n^{1/D}}$--almost rational sum up to a value of $\ln N + O(\ln^\varepsilon N)$, for any $\varepsilon >0$. In particular infinitely many times such a probability is above $\frac{1}{n}$. 

\medskip

Choices \textbf{[$Q $.2]}  and \textbf{[$\Omega $.i], for $i\in \{ A,B\}$}: Let $x$ be an arbitrary point in $\calt_s$. Almost every horosphere $\mathcal H$ in the symmetric space $\calp_s$ ($\mathcal H$ with point at infinity of a fixed given slope that is maximal singular), and almost every point $h\in \mathcal H$, have the property that the $\pi $ projection of the annulus ${\mathcal{H}} \cap \left[ B(h, n)\setminus B(h, an) \right]$ inside  $\calt_s$ intersects the shrinking ball $\pi \left( B(x, n^{1/D})\right)$ for infinitely many $n$ (where $a>0$ is the constant provided by \eqref{linearA} for the group $\Lambda$). The same holds if one replaces the condition above with the one that the $\pi $ projection of ${\mathcal{H}} \cap \left[ B(h, n)\setminus B(h, an) \right]$ rises infinitely many times into the cusp of $\calt_s$ at a height between $\eta \ln n$ and $\eta \ln n +\frac{1}{n^\delta}$, where the height in the cusp is measured by a certain fixed Busemann function (the constant $\eta$ depending on this choice).

A general version of the result, for horospheres in general symmetric spaces, can likewise be obtained. 

A number theoretical interpretation of the above (for appropriate choices of $x\in \calt_s$, the maximal singular slope and the Busemann function on $\calt_s$) is that for every $1\leq k <s$, for almost every $k$--dimensional subspace $W$ in $\R^s$, there exist a sequence $(\q_n)$ of quadratic forms farther and farther away from $\q_0$, with restrictions to $W$ of the same volume, and $\frac{1}{n^{1/D}}$--almost rational (respectively, such that for some primitive vector $v\in \Z^n$  , $\q (v)\in \left[ \frac{1}{n^\xi }\left(1-\frac{\kappa }{n^\delta } \right) ,\frac{1}{n^\xi }\right]$, where $\delta \in (0,1)$, $\kappa$ depends on $s$, $\xi >0$ depends on $\delta$ and $s$).


\section{Ghost projections for warped cones}\label{section: ghost projections}

In this section we construct new examples of non-compact ghost projections.
Such projections associated to expanders are the source of all known 
bounded geometry counterexamples to the coarse Baum-Connes conjecture \cite{higson,higson-lafforgue-skandalis,willett-yu}.
In fact, no  examples of ghost projections other than the one described in  \cite{higson}, \cite[Examples 5.3]{willett-yu} were known until now.

Warped cones are unbounded spaces constructed by successive approximations of an action of a group on some space. 
They share certain characteristics with box spaces (i.e., sequences of finite quotients of a group) and in this sense our construction is analogous to the 
construction of ghost projections for expanders.
Note however, that for our construction the acting group has to be neither residually finite nor with properties $(T)$ or ($\tau$) -- the existence 
of the ghost projection is solely a consequence of the spectral gap of the action. 
A spectral gap for a single representation is a weaker condition than property $(\tau)$.
There are many examples of actions with a spectral gap: see for instance \cite[Example 11.3]{nevo} 
for actions of non-amenable algebraic groups, and \cite{conze-guivarch} for actions on tori and nil-manifolds. Clearly, if $ G$ has property $(T)$ then every probability preserving action of $ G$ has a spectral gap.

\subsection{Warped cones as metric measure spaces}\label{sect:warped}

Let $(M,\dist ,m)$ be a compact metric space endowed with a probability measure, and let $G$ be a finitely generated group acting on $M$ by bi-Lipschitz homeomorphisms preserving the measure. Assume moreover that the action of $G$ is ergodic. Throughout this section we consider $G$ endowed with a finite, symmetric generating set $S$. 
We denote by $\vert g\vert $ the word length with respect to $S$ of $g\in G$.

We add a mild assumption on the measure, requiring in some sense its uniform distribution with respect to the metric. 

\begin{definition}\label{definition : upper uniform measure}
A measure $m$ on a metric space $(M,\dist )$ is called \emph{upper uniform} if 
$\lim_{R\to 0} \sup_{x\in M} m\left(B\left(x,R\right)\right) = 0.$
\end{definition}

Let $\operatorname{Cone}(M)= M \times (1,\infty)$ denote the truncated Euclidean cone over $M$, equipped with the measure $\nu$ which is a product measure of $m$ and the Le\-besgue measure. The restriction of the metric  $\dist_{\operatorname{Cone}(M)}$ on $\operatorname{Cone}(M)$  to $M\times \set{ t}$ is equal to $t \dist$.
The part corresponding to the interval $[0,1]$, both for the Euclidean cone, and for the warped cone defined below, is irrelevant for our purposes, since we 
are interested in large scale properties, and removing this part simplifies certain estimates.

The notion of a {\emph{warped cone}}, denoted by $\mathcal{O}=\mathcal{O}_{ G}(M)$, was first defined in \cite{roe-warped}. It is the space $M \times (1,\infty)$ endowed 
with the metric $\dist_\mathcal{O}$ that is the largest metric satisfying
$$\dist_{\mathcal{O}}(x,y)\le \dist_{\operatorname{Cone}(M)}(x,y)\ \ \ \ \text{ and }\ \ \ \ \dist_{\mathcal{O}}(x,sx)\le 1,$$
for every $x,y\in {\mathcal{O}}$ and $s\in S$. We endow $\mathcal{O}$ with the product measure $\nu$ of $m$ and the Lebesque measure.

For $t\ge 1$ we denote by ${\mathcal{O}}_t$ the part of the warped cone ${\mathcal{O}}$ that corresponds to $M\times (t,\infty)$.

In \cite{roe-warped} it is shown that the warped metric from $x$ to $y$ is the infimum over all sums
\begin{equation}\label{eq:warpedmetric}
\sum_{i=0}^{k-1} \dist(g_i x_i, x_{i+1})+\vert g_i\vert,
\end{equation}
taken over finite sequences $x=x_0,x_1,\dots,x_k=y$ in $M$ and $g_0,\dots,g_{k-1}$ in $G$. Moreover, if $\dist(x,y)_{\mathcal{O}}\le n$, 
where $n\in \N$, then we can choose $k\le n+1$.
Thus a warped cone is a metric space that interpolates between orbits of the action at $t=1$ and the group $G$ with the word metric at $t=\infty$.

If $\operatorname{Cone}(M)$ has bounded geometry (e.g. $M$ embeds into a finite-dimensional Euclidean space) (see \cite{nowak-yu,roe-lectures}) and
$ G$ acts on $\operatorname{Cone}(M)$ by bi-Lipschitz homeomorphisms then $\mathcal{O}_G(M)$ has bounded geometry \cite{roe-warped}. 

The following statement describes the relation 
of a ball in the warped metric to a ball in the Euclidean cone.
\begin{lemma}\label{lemma:RT}
Assume that $ G$ acts on $M$ by bi-Lipschitz maps. Then for each $R>0$ there exists $T>0$ such that every ball of radius $R$ in the warped cone ${\mathcal{O}}$, with center an arbitrary point  $x\in {\mathcal{O}}$, satisfies
$$B_{\mathcal{O}}(x,R)\subseteq \bigcup_{\vert g\vert \le R} B_{\operatorname{Cone}(M)}(g x, T).$$
\end{lemma}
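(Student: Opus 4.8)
The plan is to take a path realising (almost) the warped distance from $x$ to $y$, in the sense of the description \eqref{eq:warpedmetric}, and ``straighten'' it into a single group translate $hx$, while keeping careful track of how the bi-Lipschitz constants of the group elements involved amplify the errors measured in $\operatorname{Cone}(M)$.

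First I would fix $R>0$ and produce $T$. Since $G$ acts on $M$ by bi-Lipschitz homeomorphisms, the induced action on $\operatorname{Cone}(M)=M\times(1,\infty)$ (trivial on the second factor) is again by bi-Lipschitz homeomorphisms, and as $S$ is finite there is $L\ge 1$ which is a common bi-Lipschitz constant for every $s\in S$ acting on $\operatorname{Cone}(M)$; hence each $g\in G$ acts on $\operatorname{Cone}(M)$ as an $L^{|g|}$-Lipschitz map. I claim the lemma holds with $T=(R+1)\,L^{R+1}$.

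Next, let $x\in\mathcal{O}$ and $y\in B_{\mathcal{O}}(x,R)$, and choose $\varepsilon\in(0,1]$ for which the interval $(R,R+\varepsilon)$ contains no integer. By \eqref{eq:warpedmetric} and $\dist_{\mathcal{O}}(x,y)\le R$ there are points $x=x_0,x_1,\dots,x_k=y$ in $\operatorname{Cone}(M)$ and elements $g_0,\dots,g_{k-1}\in G$ with $\sum_{i=0}^{k-1}(d_i+\ell_i)<R+\varepsilon$, where $d_i=\dist_{\operatorname{Cone}(M)}(g_ix_i,x_{i+1})$ and $\ell_i=|g_i|$. Put $h_0=e$ and $h_j=g_{j-1}g_{j-2}\cdots g_0$ for $1\le j\le k$. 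From $h_jx=g_{j-1}(h_{j-1}x)$, the triangle inequality, and the $L^{\ell_{j-1}}$-Lipschitz bound for $g_{j-1}$, a one-step induction gives
$$\dist_{\operatorname{Cone}(M)}(x_j,h_jx)\ \le\ d_{j-1}+L^{\ell_{j-1}}\,\dist_{\operatorname{Cone}(M)}(x_{j-1},h_{j-1}x),$$
and since $\dist_{\operatorname{Cone}(M)}(x_0,h_0x)=0$, unrolling this and using $L\ge1$ yields
$$\dist_{\operatorname{Cone}(M)}(y,h_kx)\ \le\ \Bigl(\sum_{i=0}^{k-1}d_i\Bigr)\,L^{\sum_{i=0}^{k-1}\ell_i}\ <\ (R+\varepsilon)\,L^{R+\varepsilon}\ \le\ (R+1)\,L^{R+1}=T.$$
(The number of steps $k$ never enters, so the bound $k\le n+1$ from \eqref{eq:warpedmetric} is not needed here.) Finally $|h_k|\le\sum_{i}\ell_i$, which is a non-negative integer that is $<R+\varepsilon$, hence $\le R$ by the choice of $\varepsilon$; thus $y\in B_{\operatorname{Cone}(M)}(h_kx,T)$ with $|h_k|\le R$. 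Since $y\in B_{\mathcal{O}}(x,R)$ and $x$ were arbitrary, this is precisely the asserted inclusion.

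The only genuinely delicate point is the control of the error amplification in the straightening step: replacing an intermediate vertex $x_j$ by $h_jx$ costs a multiplicative factor equal to the Lipschitz constant of the group element applied at that step, which on its own may be arbitrarily large; what rescues the estimate is that the \emph{total} word length $\sum_i\ell_i$ along a near-optimal path is at most roughly $R$, so the accumulated amplification is bounded by $L^{R+1}$, a constant depending only on $R$ (and the action). Everything else is routine, apart from the small bookkeeping with $\varepsilon$ and the integrality of word lengths, which is what upgrades $|h_k|<R+\varepsilon$ to the required $|h_k|\le R$.
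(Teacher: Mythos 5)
Your proof is correct and follows essentially the same route as the paper: straighten a (near-)optimal chain from \eqref{eq:warpedmetric} into a single translate $hx$ of the center by the accumulated product of the $g_i$, and control the Lipschitz amplification using the fact that the total word length along the chain is at most (roughly) $R$, so that the radius can be bounded uniformly over the finitely many relevant group elements. Your write-up is in fact a bit more careful than the paper's sketch --- the explicit constant $T=(R+1)L^{R+1}$, the $\varepsilon$-bookkeeping needed because the warped distance is only an infimum, and the integrality argument upgrading $|h|<R+\varepsilon$ to $|h|\le R$ --- but the underlying argument is the same.
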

\begin{proof} We use the definition of the warped metric as infimum of finite sums of the form described in \eqref{eq:warpedmetric}. Consider the case $g_0=e$. 
The other case is proved analogously and is omitted.
We have $x_1\in B(x_0, R)$. The next step in the chain is realized by $g_1x_1$, for some
group element $g_1\in B_ G(e,R_1)$, where in this case we set $R_1=R$. 
Then $x_2\in X$ is such that the inequality
$$\dist(x_2,g_1x_1)\le R_2=R-\dist(x_0,x_1)-\vert g_1\vert$$
is satisfied.
Observe now that 
\begin{align*}
\dist(g_1 x_0,x_2) & \le \dist(g_1 x_0,g_1 x_1)+\dist(g_1x_1,x_2)\\
&\le  L_{g_1}\dist(x_0,x_1)+ R_2\\
&\le L_{g_1} R_1+R_2,
\end{align*}
so that $x_2\in B(g_1x_0, L_{g_1}R_1+R_2)$, where $L_g$ denotes the Lipschitz constant of the transformation of $M$ associated to $g$. 
Following these estimates for $g_2$ and $x_3$ we observe that  $x_3\in B(g_2x_0, L_{g_2g_1} R_1+L_{g_2}R_2+R_3)$.
In general, for every $i=0,1,\dots, k$ there exists $T(g_i)$ such that $x_{i+1}\in B_{\operatorname{Cone}(M)}(g_ix, T(g_i))$,
where the radii $T(g_i)$ depend on $R$ and the Lipschitz constants of the transformations of $M$ associated to  $g_i$, but can be chosen independently of $x$. 
All possible choices for $g_i$ have to satisfy 
$\vert g_i\vert \le R$. Therefore setting $T=\sup \setd{T(g)}{\vert g \vert \le R}$ we obtain the claim.
\end{proof}
Note that if the action of $G$ on $M$ is isometric then we can take $T=R$ in Lemma \ref{lemma:RT}.
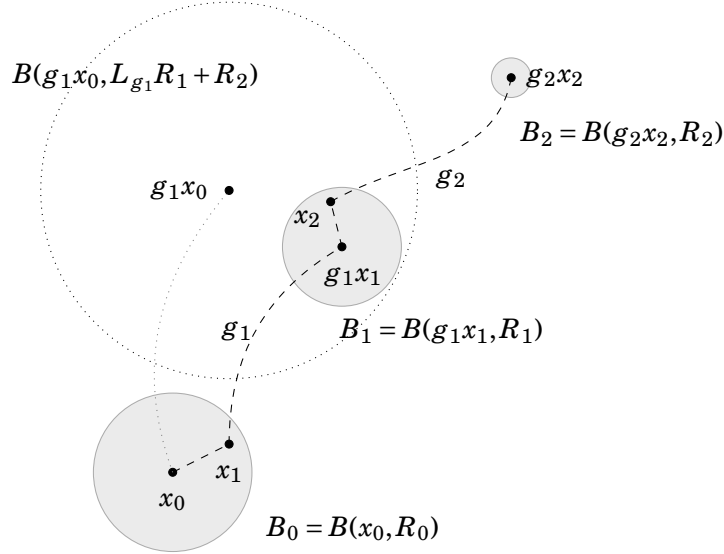
\begin{figure}[h]
\begin{center}
\begin{tikzpicture}[scale=0.75]
\draw[fill=lightgray,opacity=.3] (4,3) circle (40pt);
\draw[dashed] (4,3) to (5,3.5);
\filldraw (4,3) circle (2pt);
\filldraw (5,3.5) circle (2pt);
\node [below] at (4,2.8) {$x_0$};
\node [below] at (5,3.3) {$x_1$};

\draw[dotted] (5,8) circle (95pt);
\draw[gray,dotted] (4,3) to [out=110,in=230] (5,8);
\filldraw (5,8) circle (2pt);

\draw[fill=lightgray,opacity=.3] (7,7) circle (30pt);
\filldraw (7,7) circle (2pt);
\draw[dashed] (5,3.5) to [out=90,in=210] (7,7);

\draw[dashed] (7,7) to (6.8,7.8);
\filldraw (6.8,7.8) circle (2pt);

\draw [fill=lightgray,opacity=.3] (10,10) circle (10pt);
\filldraw (10,10) circle (2pt);

\draw[dashed] (6.8,7.8) to [out=30, in= 260] (10,10);

\node [below] at (7.2,6.8) {$g_1 x_1$};
\node [left] at (6.8,7.5) {$x_2$};
\node [right] at (10.1,10) {$g_2 x_2$};
\node [left] at (4.8,8) {$g_1 x_0$};

\node [right] at (5.5,2) {$B_0=B(x_0,R_0)$};
\node [right] at (6.8,5.5) {$B_1=B(g_1x_1,R_1)$};
\node [right] at (10,9) {$B_2=B(g_2x_2,R_2)$};
\node [right] at (1,10) {$B(g_1x_0, L_{g_1}R_1+R_2)$};

\node [right] at (4.7,5.5) {$g_1$};
\node [right] at (8.5,8.2) {$g_2$};

\end{tikzpicture}
\end{center}
\caption{A possible path realizing a distance $\le R$ in the warped metric. The length of this path 
is $\dist(x_0,x_1)+\vert g_1\vert +\dist(g_1x_1,x_2)+\vert g_2\vert\le R$.}
\end{figure}

\begin{lemma}\label{lemma: balls in the warped cone have small measure}
For every $R,\varepsilon >0$ there exists $t\in (1,\infty)$ such that 
$$
\nu \left( B_{\mathcal{O}}(x,R) \right)\le \varepsilon,
$$
 for every $x\in {\mathcal{O}}_t$.
\end{lemma}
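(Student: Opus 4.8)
The plan is to reduce the estimate to a statement about balls in the Euclidean cone via Lemma~\ref{lemma:RT}, and then to exploit the fact that a ball of fixed radius in $\operatorname{Cone}(M)$ whose centre has large radial coordinate projects onto a base ball of radius tending to $0$; this is precisely where the upper uniformity of $m$ takes over.

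\textbf{Step 1 (reduction to cone balls).} Fix $R,\varepsilon>0$. By Lemma~\ref{lemma:RT} there is $T=T(R)>0$ such that $B_{\mathcal{O}}(x,R)\subseteq\bigcup_{\vert g\vert\le R}B_{\operatorname{Cone}(M)}(gx,T)$ for every $x\in\mathcal{O}$. Since $S$ is finite, the set $\{g\in G:\vert g\vert\le R\}$ is finite; let $N=N(R)$ denote its cardinality. The group $G$ acts on $\mathcal{O}=M\times(1,\infty)$ through the first coordinate, so if $x\in\mathcal{O}_t$ then $gx\in\mathcal{O}_t$ for all such $g$; in particular every centre $gx$ appearing above has radial coordinate $>t$.

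\textbf{Step 2 (cone balls at large height are thin).} I would prove that for every $T>0$, every $t>1$, and every $p\in M$,
\[
B_{\operatorname{Cone}(M)}\bigl((p,t),T\bigr)\ \subseteq\ B_M\!\left(p,\tfrac{2T}{t}\right)\times(t-T,\,t+T).
\]
The radial inclusion holds because the radial projection $M\times(1,\infty)\to(1,\infty)$ is $1$-Lipschitz for $\dist_{\operatorname{Cone}(M)}$. For the base inclusion, if $(p',s')$ lies in the left-hand ball then $\vert s'-t\vert\le T$, and moving vertically gives $\dist_{\operatorname{Cone}(M)}\bigl((p',s'),(p',t)\bigr)\le\vert s'-t\vert\le T$; by the triangle inequality $\dist_{\operatorname{Cone}(M)}\bigl((p,t),(p',t)\bigr)\le 2T$, and since $\dist_{\operatorname{Cone}(M)}$ restricted to $M\times\{t\}$ equals $t\,\dist$ (Section~\ref{sect:warped}), this gives $\dist(p,p')\le 2T/t$.

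\textbf{Step 3 (conclusion and the main point).} Since $\nu=m\times\mathrm{Leb}$ and $m$ is a probability measure, Step~2 gives, for every $y\in M\times(t,\infty)$, the bound $\nu\bigl(B_{\operatorname{Cone}(M)}(y,T)\bigr)\le 2T\cdot\sup_{p\in M}m\!\left(B_M\!\left(p,\tfrac{2T}{t}\right)\right)$, and combining this with Step~1 yields, for every $x\in\mathcal{O}_t$,
\[
\nu\bigl(B_{\mathcal{O}}(x,R)\bigr)\ \le\ N\cdot 2T\cdot\sup_{p\in M}m\!\left(B_M\!\left(p,\tfrac{2T}{t}\right)\right).
\]
Because $m$ is upper uniform, $\sup_{p\in M}m\bigl(B_M(p,\rho)\bigr)\to 0$ as $\rho\to 0$, so the right-hand side tends to $0$ as $t\to\infty$; choosing $t$ large enough that it is $\le\varepsilon$ finishes the proof. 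The only mildly technical ingredient is the cone-geometry estimate of Step~2, and even that is elementary given the normalization of $\dist_{\operatorname{Cone}(M)}$ fixed in Section~\ref{sect:warped}; everything else is bookkeeping with the product measure together with an application of Lemma~\ref{lemma:RT}.
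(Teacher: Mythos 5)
Your proposal is correct and follows essentially the same route as the paper: cover the warped ball by finitely many Euclidean-cone balls via Lemma~\ref{lemma:RT}, observe that such a cone ball centred at large radial coordinate lies in a product of a small base ball with an interval of length $2T$, and conclude by upper uniformity of $m$. The only difference is cosmetic: you make explicit the base radius bound ($2T/t$, via the $1$-Lipschitz radial projection and the slice normalization $t\,\dist$), where the paper merely asserts that the radius can be chosen depending on $T$ and $s$ and tending to zero.
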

\begin{proof}
Let $T>0$ and $x=(y,s)\in {\mathcal{O}}_t$. Every ball $B_{\operatorname{Cone(M)}}(x,T)$ is contained in a product $B(y,r) \times [s-T,s+T]$, where $r$ can be chosen with 
an upper bound depending only on $T$ and $s$.
Then 
$$\nu(B_{\operatorname{Cone(M)}}(x,T))\le 2T m(B(y,r)),$$
which tends to zero uniformly as $t\to \infty$, by  the upper uniformity of the measure $m$.
By the previous lemma we also have
$$\nu(B_{\mathcal{O}}(x,R)) \le \sum_{\vert g\vert \le R}\nu\left(B_{\operatorname{Cone}(M)}(g x,T)\right) \le 2T m(B(y,r )) \vert B_G(e,R)\vert, $$
which again tends to zero uniformly when $t\to \infty$, as $R$ and $T$ are fixed.
\end{proof}

\subsection{Finite propagation operators on a warped cone}
Consider a locally compact metric measure space $(X, \dist, m)$.
An \emph{$X$-module} is a (separable) Hilbert space $H$ equipped with a representation of $C_0(X)$.
An operator $T \in B(H)$ has {\emph{finite propagation}} if there exists $S>0$ such that for $\phi,\psi\in C_0(X)$ satisfying $\dist(\supp \phi, \supp \psi)>S$ we have
$\phi T\psi =0$.

The space $H=L_2({\mathcal{O}},\nu)$ is equipped with the standard, faithful representation of $C_0({\mathcal{O}})$ by multiplication operators
and thus naturally becomes an $\mathcal{O}$-module.

The action of $ G$ on ${\mathcal{O}}$ preserves the measure $\nu$  
and induces a unitary representation $\pi$ of $ G$ on $L_2({\mathcal{O}},\nu)\simeq L_2\left(M\times \left(1,\infty\right)\right)$ defined by
$\pi_g f(x,t)=f(g^{-1}x,t)$.

\begin{proposition}\label{proposition : pi has finite propagation}
For every $g\in G$ the operator $\pi_{g}$ has bounded propagation on $L_2(\mathcal{O},\nu)$. 

In particular, the Markov operator $A_\pi^{\mu}$ has bounded propagation 
for any choice of a probability 
measure $\mu$ supported on a finite generating set $S$ of $G$.
\end{proposition}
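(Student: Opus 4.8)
The plan is to derive everything from one elementary geometric estimate: for every $g\in G$ and every point $(x,t)\in\mathcal{O}$ one has
\begin{equation*}
\dist_{\mathcal{O}}\bigl((x,t),(gx,t)\bigr)\le |g|.
\end{equation*}
This follows at once from the defining inequality $\dist_{\mathcal{O}}(y,sy)\le 1$ of the warped metric: writing $g=s_1\cdots s_k$ with $s_i\in S$ and $k=|g|$, and setting $h_j=s_js_{j+1}\cdots s_k$ for $1\le j\le k$ together with $h_{k+1}=e$, we have $h_1=g$, $h_j=s_jh_{j+1}$, so by the triangle inequality
\begin{equation*}
\dist_{\mathcal{O}}\bigl((x,t),(gx,t)\bigr)\le\sum_{j=1}^{k}\dist_{\mathcal{O}}\bigl((h_{j+1}x,t),(s_jh_{j+1}x,t)\bigr)\le k=|g|.
\end{equation*}
(Equivalently, one reads this off the description \eqref{eq:warpedmetric} of $\dist_{\mathcal{O}}$ as an infimum of sums, taking the one-term chain $x_0=x$, $x_1=gx$ with group element $g$.)

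Next I would make the operator $M_\phi\pi_gM_\psi$ explicit, where $M_\phi,M_\psi$ denote multiplication by $\phi,\psi\in C_0(\mathcal{O})$. For $f\in L_2(\mathcal{O},\nu)$,
\begin{equation*}
\bigl(M_\phi\pi_gM_\psi f\bigr)(x,t)=\phi(x,t)\,\psi(g^{-1}x,t)\,f(g^{-1}x,t).
\end{equation*}
Suppose $\dist_{\mathcal{O}}(\supp\phi,\supp\psi)>|g|$; to see that $M_\phi\pi_gM_\psi=0$ it is enough to check that $\phi(p)\,\psi(g^{-1}\cdot p)=0$ for every $p=(x,t)\in\mathcal{O}$. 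If this failed at some $p$, then $\phi(p)\neq 0$ gives $p\in\supp\phi$, while $\psi(g^{-1}\cdot p)\neq 0$ gives $z:=g^{-1}\cdot p\in\supp\psi$; since $p=g\cdot z$, the estimate above yields $\dist_{\mathcal{O}}(p,z)\le|g|$, hence $\dist_{\mathcal{O}}(\supp\phi,\supp\psi)\le|g|$ — a contradiction. Therefore $M_\phi\pi_gM_\psi=0$, and $\pi_g$ (which is unitary, in particular bounded) has propagation at most $|g|$.

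For the last assertion, note that when $\mu$ is a probability measure supported on the finite set $S$ the Bochner integral defining the Markov operator reduces to a finite sum $A_\pi^{\mu}=\sum_{s\in S}\mu(\{s\})\,\pi_s$. Each $\pi_s$ has propagation at most $|s|\le 1$ by the previous paragraph, and if $\dist_{\mathcal{O}}(\supp\phi,\supp\psi)>1$ then $M_\phi\pi_sM_\psi=0$ for every $s\in S$, so $M_\phi A_\pi^{\mu}M_\psi=\sum_{s\in S}\mu(\{s\})\,M_\phi\pi_sM_\psi=0$; thus $A_\pi^{\mu}$ has propagation at most $1$ (and, by the same argument, propagation at most $\max\{|s|:s\in\supp\mu\}$ for any finitely supported $\mu$ on $G$).

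I do not anticipate a genuine obstacle here: the entire content is the inequality $\dist_{\mathcal{O}}(x,gx)\le|g|$ together with the pointwise formula for $M_\phi\pi_gM_\psi$. The only minor points needing care are the index bookkeeping in the first display and the observation that $\{\phi\neq 0\}\subseteq\supp\phi$ and $\{\psi\neq 0\}\subseteq\supp\psi$, which is what makes the support condition annihilate the operator at every point rather than merely $\nu$-almost everywhere.
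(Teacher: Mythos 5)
Your argument is correct and follows essentially the same route as the paper: the whole content is that $\phi\,\pi_g\,\psi\neq 0$ forces $\supp\phi\cap g^{-1}(\supp\psi)\neq\emptyset$, and the warped-metric inequality $\dist_{\mathcal{O}}(p,gp)\le|g|$ then bounds the propagation by $|g|$, with the statement for $A_\pi^\mu$ following since the Markov operator is a finite convex combination of the $\pi_s$, $s\in S$. Your write-up merely makes explicit the two points the paper leaves implicit (the triangle-inequality chain giving $\dist_{\mathcal{O}}(p,gp)\le|g|$ and the reduction of the Bochner integral to a finite sum), so no further changes are needed.
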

\begin{proof}
Assume that $\phi \pi_g \psi\neq 0$ for $\phi,\psi\in C_0({\mathcal{O}})$. This is possible only if 
$\supp \phi\cap g^{-1}(\supp \psi)\neq \emptyset$.
This however implies that $\dist_{\mathcal{O}}(\supp \phi,\supp \psi)\le \vert g\vert$.
\end{proof}

\subsection{Noncompact ghost projections}

The notion of ghost operator was introduced by G. Yu (unpublished).
\begin{definition}
Given a metric measure space $(X, \dist , \nu )$, an operator $T\in {\mathcal{B}}\left( L_2(X,\nu) \right)$ is said to be a \emph{ghost} if for every $R,\varepsilon>0$ there exists a bounded set $B\subseteq X$ such that 
for $\psi\in L_2(X,\nu)$ satisfying $\Vert \psi \Vert=1$ and $\supp \psi\subseteq B(x,R)$ for some $x\in X\setminus B$ we have $\Vert T\psi\Vert \le \varepsilon$.
\end{definition}

Ghost operators are operators that are \emph{locally invisible at infinity} \cite{chen-wang-1}. Such operators 
are intrinsically connected to large scale geometric features of the space \cite{chen-wang-1, chen-wang-2, roe-willett}.
Non-compact ghost projections are central objects in the context of the Baum-Connes conjecture, see below. We refer to 
\cite{higson,higson-lafforgue-skandalis,roe-lectures,willett-yu} for a detailed discussion.

Define $\mathfrak{G}:L_2(\mathcal{O},\nu)\to L_2(\mathcal{O},\nu)$ by setting 
$$\mathfrak{G}f(x,t)=\int_{M \times \set{t}} f(y,t)\, dm(y)$$
for every $(x,t)\in M \times (1,\infty )$.
The map $\mathfrak{G}$ is the orthogonal projection onto the subspace $V$ composed of functions that are constant on $ M \times \set{t}$ for every $t\in (1,\infty)$.
The subspace $V$ is a copy of $L_2(1,\infty)$ embedded in $L_2({\mathcal{O}},\nu)$.

\begin{theorem}\label{theorem: non-compact ghost projection}
Let $(M,\dist ,m)$ be a compact metric space endowed with a probability measure, and let $G$ be a finitely generated group acting on $M$ ergodically by bi-Lipschitz homeomorphisms preserving the measure.
If the action of $G$ on $M$ has a spectral gap then $\mathfrak{G}\in \mathcal{B}(L_2(\mathcal{O},\nu))$ is a non-compact ghost projection 
that is a norm limit of finite propagation operators.\end{theorem}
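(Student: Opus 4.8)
The plan is to identify $\mathfrak{G}$ with the canonical projection $\mathcal{P}_\pi$ onto the invariant vectors of the Koopman-type representation $\pi$ of $G$ on $L_2(\mathcal{O},\nu)$, and then to read off the three assertions — that $\mathfrak{G}$ is a projection, that it is non-compact, and that it is a ghost which is a norm limit of finite propagation operators — from this identification together with the geometric estimates of Section \ref{sect:warped} and the Markov-operator machinery of Section \ref{section: Kazhdan constants to Markov operators}. Concretely, I would first write $L_2(\mathcal{O},\nu)$ as the direct integral $\int^{\oplus}_{(1,\infty)}L_2(M,m)\,dt$ and observe that $\pi_g f(x,t)=f(g^{-1}x,t)$ is the constant field of the Koopman representation $\pi^M$ of $G$ on $L_2(M,m)$, acting trivially on the $t$-variable. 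Ergodicity of the $G$-action on $(M,m)$ then gives that $E^\pi$ is exactly the subspace $V$ of functions constant on each slice $M\times\{t\}$, and $E_\pi=V^\perp$ is the space of functions with zero $m$-mean on $m$-almost every slice; hence $\mathfrak{G}$, being the orthogonal projection onto $V$, equals $\mathcal{P}_\pi$. Non-compactness is then immediate, since $V\cong L_2(1,\infty)$ is infinite-dimensional.

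For the ghost property, fix $R,\varepsilon>0$. I would use Lemma \ref{lemma:RT} to cover a warped ball $B_{\mathcal{O}}(x_0,R)$, with $x_0=(y_0,s_0)$, by the finitely many cone balls $B_{\operatorname{Cone}(M)}(gx_0,T)$, $\vert g\vert\le R$; since the metric on the slice $M\times\{s\}$ is $s\dist$, each such cone ball meets a slice in a set of $\dist$-diameter $O(T/s_0)$ around $gy_0$, so that for every height $s$ the slice $\{y:(y,s)\in B_{\mathcal{O}}(x_0,R)\}$ has $m$-measure at most $\delta(s_0):=\vert B_G(e,R)\vert\cdot\sup_{z}m(B(z,O(T/s_0)))$, which tends to $0$ as $s_0\to\infty$ by upper uniformity of $m$ — this is the estimate underlying Lemma \ref{lemma: balls in the warped cone have small measure}. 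Then for $\psi$ with $\|\psi\|=1$ and $\supp\psi\subseteq B_{\mathcal{O}}(x_0,R)$, using $m(M)=1$ and the Cauchy--Schwarz inequality on each slice,
$$
\|\mathfrak{G}\psi\|^2=\int_{(1,\infty)}\Bigl|\int_M\psi(y,s)\,dm(y)\Bigr|^2 ds\le\delta(s_0)\int_{(1,\infty)}\int_M\vert\psi(y,s)\vert^2\,dm(y)\,ds=\delta(s_0),
$$
and taking $B=M\times(1,t]$, a bounded subset of $\mathcal{O}$, with $t$ large enough that $\delta(t)<\varepsilon^2$, finishes this part.

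For the approximation by finite propagation operators, let $(Q,\kappa)$ be a (necessarily finite, as $G$ is discrete) Kazhdan pair for $\pi^M$ restricted to $L_2^0(M)=E_{\pi^M}$, provided by the spectral gap hypothesis, pick $g\notin Q$, and let $\mu$ be the uniform probability measure on $Qg\cup\{g\}$. The Markov operator $A_\pi^\mu$ is again the constant field of $A_{\pi^M}^\mu$, so $\bigl\|A_\pi^\mu\vert_{E_\pi}\bigr\|=\bigl\|A_{\pi^M}^\mu\vert_{L_2^0(M)}\bigr\|\le 1-\tfrac{2}{\#Q+1}\delta_{L_2}(\kappa)<1$ by Corollary \ref{corollary: main theorem for finitely generated and uniform measure}. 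Theorem \ref{theorem: definition of the projection onto invariant vectors}, which applies since $\pi$ is a complemented (indeed unitary) representation on the reflexive space $L_2(\mathcal{O},\nu)$ and the last norm is $<1$, then gives $\bigl\|(A_\pi^\mu)^k-\mathcal{P}_\pi\bigr\|\le\lambda^k$ with $\lambda<1$, i.e. $(A_\pi^\mu)^k\to\mathfrak{G}$ in operator norm; and since $A_\pi^\mu=\sum_{h\in Qg\cup\{g\}}\mu(h)\pi_h$ is a finite combination of the operators $\pi_h$, which have finite propagation by Proposition \ref{proposition : pi has finite propagation}, the operator $(A_\pi^\mu)^k=A_\pi^{\mu^{\ast k}}$ has propagation at most $k\max_{h}\vert h\vert<\infty$. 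Hence $\mathfrak{G}$ is a norm limit of finite propagation operators.

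I expect the only genuinely delicate point to be the ghost estimate: everything else is bookkeeping — the identification $E_\pi=V^\perp$ via ergodicity, and the fact that the constant direct integral of the Markov operators restricts to $E_\pi$ with the same norm — whereas the ghost property is where the warped geometry actually enters, through the interplay between Lemma \ref{lemma:RT} and the upper uniformity of $m$. One should be slightly careful that the slice-measure bound $\delta(s_0)$ is uniform over the heights $s$ of points inside the ball and monotone in $s_0$, so that the single truncation $M\times(1,t]$ serves as the exceptional bounded set $B$ in the definition of a ghost.
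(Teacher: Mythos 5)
Your proposal is correct and follows essentially the same route as the paper's proof: non-compactness from the infinite-dimensional range, norm approximation of $\mathfrak{G}$ by powers of a finitely supported Markov operator (slice-wise spectral gap plus Fubini, with finite propagation coming from Proposition \ref{proposition : pi has finite propagation}), and the ghost property from Lemma \ref{lemma:RT} together with upper uniformity of $m$. Your per-slice Cauchy--Schwarz estimate is just a streamlined version of the computation the paper runs through Lemma \ref{lemma: balls in the warped cone have small measure}, and your use of the measure on $Qg\cup\{g\}$ instead of an admissible measure on a generating set is an inessential variation.
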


\begin{proof}
The projection $\mathfrak{G}$ is not compact since its range is infinite-dimensional.
The action has a spectral gap therefore, by 
Theorems \ref{theorem: main theorem in text spectral gaps and projections} and \ref{theorem: definition of the projection onto invariant vectors}, for every  
probability measure $\mu$ on $G$ admissible with respect to a finite generating set and of density function $\rho$, there exists $\lambda<1$ such that the following holds. For every $t>1$ we have 
$$\left\lVert \pi(\rho^k) f_{\vert M\times\set{t}} - \mathfrak{G}f(\cdot, t) \right \rVert_{L_2(M,m)}^2 \le \lambda^{2k} \left\lVert f_{\vert M\times\set{t}}\right\rVert_{L_2(M,m)}^2.$$
Using Fubini's theorem we conclude that
$$\left\lVert \pi(\rho^k) - \mathfrak{G}\right\rVert_{{\mathcal{B}}(L_2({\mathcal{O}},\nu))} \le \lambda^k,$$
where $\pi(\rho^k)=\left( A_\pi^{\mu}\right)^k$ has finite propagation by Proposition \ref{proposition : pi has finite propagation}.

It remains to show that $\mathfrak{G}$ is a ghost. Let $R,\varepsilon>0$.
For every $\delta >0$  there exists $t>0$ such that $\nu(B_{\mathcal{O}}(p,R))\le \delta$ for every $p\in {\mathcal{O}}_t$, by lemma \ref{lemma: balls in the warped cone have small measure}.
Consider $f\in L_2({\mathcal{O}},\nu)$ such that 
$\supp f \subseteq B(p,R)$
for some $p\in {\mathcal{O}}_t$. For every $s\in (t,\infty)$ the projection $\mathfrak{G}$ satisfies 
\begin{align*}
\mathfrak{G}f(x,s)^2 & =\left(\int_{M\times \set{s}} f(y,s)\, dm(y)\right)^2\\
&= m\left(\supp f\cap \left(M\times\set{s}\right)\right)^2\left(\dfrac{1}{m\left(\supp f\cap (M\times\set{s})\right)} \int_{\supp f\cap M\times\set{s}} f(y,s)\, dm(y)\right)^2\\
& \le m(\supp f\cap (M\times\set{s}))^2  \int_{\supp f\cap M\times\set{s}} f(y,s)^2\, dm(y)\\
&\le m(\supp f\cap (M\times\set{s}))^2.
\end{align*}
The above implies, by integration
and Fubini's theorem, that
$\Vert \mathfrak{G}f \Vert^2 \le \nu \left( \supp f \right )^2\le \nu(B_{\mathcal{O}}(p,R))^2\le \delta^2$.
\end{proof}

We point out that although Theorem \ref{theorem: non-compact ghost projection} is formulated for Hilbert spaces, the same proof gives a straightforward stronger
version. Namely, provided that the action on the Bochner space $L_2(M,m;E)$ has a spectral gap we obtain that the projection from 
$L_2(\mathcal{O},\nu;E)$ onto
$L_2([1,\infty);E)$ is a non-compact ghost projection which is a limit of finite propagation operators. At present, however, ghost projections on non-Hilbert spaces do not have
applications similar to the ones on Hilbert spaces. 
We also point out that very likely, the construction admits a generalization to a foliated versions of the warped cone \cite{roe-foliated}.

There are many group actions to which the above theorem applies, in particular actions on compact groups of finitely generated (free) subgroups. This latter type of constructions were motivated by the Ruziewicz problem (see \cite{bourgain-gamburd,bourgain-gamburd-jems} and references therein). Thus, Theorem \ref{theorem: non-compact ghost projection} applies for instance to certain warped cones $\mathcal{O}_{\mathbb{F}_n}(\operatorname{SU}(2))$, where the free group 
$\mathbb{F}_n$ is a subgroup of $\operatorname{SU}(2)$ generated by elements of a specific type \cite{bourgain-gamburd}.

\comment
In relation to the above, Guoliang Yu posed the following
\begin{question}
Does there exist a warped cone over an action with a spectral gap as above that does not coarsely contain a sequence of expanders?
\end{question}

The example of $\mathcal{O}_{\mathbb{F}_n}(\operatorname{SU}(2))$ mentioned previously seems to be a good candidate for such an example.
\endcomment

\comment
Note that warped cones satisfying the spectral gap property do not have Yu's property A  
(this property is in a sense a non-equivariant version of amenability; we refer to \cite{nowak-yu} for further details). 
This fact can be proved either by adapting the arguments in \cite{roe-warped} or with the following argument. 
Given a discrete space $X$ with property A, every ghost on $X$ is compact \cite{chen-wang-2} and an 
appropriate generalization of \cite[Lemma 4.3]{chen-wang-2} 
leads to a similar conclusion in our setting (in fact the existence of non-compact ghost operators characterizes the failure of property A, see \cite{roe-willett}). 
Therefore Theorem \ref{theorem: intro warped cones} allows to conclude that spectral gap representations yield warped cones without property A. 
\endcomment

\subsection{The coarse Baum-Connes conjecture}
Consider now an $X$-module $H$, which is ample; that is, no non-zero element of $C_0(X)$ acts on $H$ as a compact operator.
The Roe algebra $C^*(X)$ of a space $X$ is the closure of  locally compact finite propagation 
operators on $H$. 
An operator $T$ on an $X$-module $H$ is \emph{locally compact} if for every $f\in C_0(X)$  the operators $fT$ and $Tf$ are compact. 
Recall that the coarse Baum-Connes conjecture predicts that for a metric space $X$ of  bounded geometry, the coarse assembly map
\begin{equation}\label{equation: coarse assembly map}
\lim_{d\to\infty}\, K_*(P_d(X))\ \longrightarrow\  K_*(C^*(X)),
\end{equation}
from the coarse $K$-homology of $X$ to the $K$-theory of the Roe algebra, is an isomorphism. Above, $P_d(X)$ is the Rips complex at scale $d\ge 0$.
If true for a finitely generated group $G$ with a finite classifying space, the coarse Baum-Connes conjecture implies the Novikov conjecture on the homotopy invariance of higher signatures via a descent principle, see \cite[Theorem 8.4]{roe-cbms}.

Counterexamples to the coarse Baum-Connes conjecture were constructed in \cite{higson, higson-lafforgue-skandalis}.
It was proved there that the coarse assembly map is not surjective for $X$ a coarse disjoint union of expanders 
obtained as quotients of an exact group with property $(T)$. The reason is the existence of a non-compact ghost projection $\mathfrak{G}$
which is a limit of finite propagation operators.
The $K$-theory class represented by $\mathfrak{G}\otimes p$ in $K_*(C^*(X))$, where $p$ is a rank one projection, 
is not in the image of the coarse assembly map. At present such expanders provide
the only known bounded geometry counterexamples to the coarse Baum-Connes conjecture.

Let $G$, $M$ satisfy the assumptions of Theorem \ref{theorem: non-compact ghost projection}. 
\begin{conjecture}
The coarse assembly map (\ref{equation: coarse assembly map}) is not surjective for the warped cone $X=\mathcal{O}_GM$. 
\end{conjecture}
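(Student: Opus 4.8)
The plan is to transplant the obstruction argument of Higson \cite{higson} and Higson--Lafforgue--Skandalis \cite{higson-lafforgue-skandalis} from coarse disjoint unions of expanders to the warped cone, with the non-compact ghost projection $\mathfrak{G}$ of Theorem \ref{theorem: non-compact ghost projection} playing the role of the expander ghost projection and the transformation groupoid $M\rtimes G$ replacing the residually finite group. Throughout one assumes, as is needed even to state the conjecture, that $\operatorname{Cone}(M)$ has bounded geometry (for instance $M$ embeds bi-Lipschitzly into a finite-dimensional Euclidean space), so that $\mathcal{O}=\mathcal{O}_G(M)$ has bounded geometry by \cite{roe-warped}. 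Fix an auxiliary rank-one projection $p$ on $\ell_2(\NN)$. Since $\mathfrak{G}$ is a norm limit of finite propagation operators and $\mathfrak{G}\otimes p$ is locally compact --- the range of $\mathfrak{G}$ on each annulus $M\times(n,n+1]$ is a copy of $L_2(n,n+1]$, on which multiplication by $C_0$-functions is compact --- the operator $\mathfrak{G}\otimes p$ lies in the Roe algebra $C^*(\mathcal{O})$ and defines a class $[\mathfrak{G}\otimes p]\in K_0(C^*(\mathcal{O}))$. The theorem reduces to showing (a) this class is nonzero, and (b) it is not in the image of the coarse assembly map \eqref{equation: coarse assembly map}.

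For (a) I would exploit the level structure of the warped cone. Decomposing $\mathcal{O}$ into annuli $A_n=M\times(n,n+1]$, restriction of finite-propagation operators to the $A_n$ followed by passage to the quotient by operators supported near infinity yields a $K$-theory map from a corner of $C^*(\mathcal{O})$ into the asymptotic product $\textstyle\prod_n\mathcal{B}(L_2(A_n))/\bigoplus_n\mathcal{B}(L_2(A_n))$; under it $\mathfrak{G}$ goes to the class of the family of fibrewise averaging projections $(\mathfrak{G}_n)$, each $\mathfrak{G}_n$ a nonzero projection. The point of Lemma \ref{lemma: balls in the warped cone have small measure} is exactly that $\mathfrak{G}$ genuinely is a ghost, so none of these images lies in the ideal $\bigoplus_n\mathcal{B}(L_2(A_n))$; tracking this through $K$-theory (via the limiting dimension count that detects a nonzero projection in each block, as in \cite{willett-yu}) gives $[\mathfrak{G}\otimes p]\neq 0$.

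The serious part is (b), and it is where I expect the main obstacle. In the expander case the obstruction comes from the fact that the coarse assembly map factors through a \emph{reduced} Roe algebra, while the expander ghost projection is the image of the Kazhdan projection, which lives in the \emph{maximal} completion but is killed by the canonical conditional expectation onto the reduced part, precisely because the ambient group has property $(T)$ and is exact. Here I would substitute the spectral gap of the action for property $(T)$ of the group: by Theorem \ref{theorem: main theorem in text spectral gaps and projections} the spectral gap produces a Kazhdan-type projection $\mathcal{P}$ for the $G$-action in the maximal crossed product $L^\infty(M)\rtimes_{\max}G$ (equivalently, in the algebra $C_{\mathcal{F}}(G)$ attached to the Koopman representation on $L_2(M)$), and $\mathfrak{G}$ is, after stabilising along the cone direction and controlling propagation, the image of $\mathcal{P}$ inside $C^*(\mathcal{O})$. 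One then needs an intermediate completion $C^*_{\bullet}(\mathcal{O})$, sitting between a ``maximal'' and the reduced Roe algebra, through which the assembly map factors and in which the image of $\mathcal{P}$ vanishes --- the vanishing being a consequence of the fact that a spectral gap forbids almost invariant vectors and hence forbids $\mathcal{P}$ from being approximated by finite-propagation operators that are small in the reduced norm. Identifying this intermediate algebra is the crux: the warped cone is \emph{connected}, not a coarse disjoint union of finite graphs, so the clean $\bigsqcup X_n$ technology does not apply verbatim; one will likely need either a controlled/relative $K$-theory argument in the spirit of Oyono-Oyono and Yu, or to pass to the foliated groupoid picture $M\rtimes G$ and invoke a Baum--Connes / $\gamma$-element statement for that groupoid together with a property-$(T)$-type obstruction for the action. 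Granting such a factorisation, the pairing of $[\mathfrak{G}\otimes p]$ with the resulting ``ghost'' trace (or with coarse co-assembly) is nonzero while every class in the image of \eqref{equation: coarse assembly map} pairs trivially; making this last dichotomy precise, and ruling out that any class coming from $\lim_{d\to\infty}K_*(P_d(\mathcal{O}))$ reproduces the ghost projection, is where the bulk of the work lies.
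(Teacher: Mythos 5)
The statement you are asked about is not proved in the paper at all: it is stated explicitly as a \emph{conjecture}, and the authors offer no argument beyond constructing the ghost projection $\mathfrak{G}$ (Theorem \ref{theorem: non-compact ghost projection}) and pointing to the analogy with the expander counterexamples of \cite{higson,higson-lafforgue-skandalis}. Your proposal likewise is not a proof, and you say so yourself: the entire obstruction step (b) --- producing a completion of the algebra of finite propagation operators through which the assembly map \eqref{equation: coarse assembly map} factors and in which the image of the Kazhdan-type projection vanishes, and then ruling out that any class from $\lim_d K_*(P_d(\mathcal{O}))$ hits $[\mathfrak{G}\otimes p]$ --- is exactly the missing content, and it is missing both from your sketch and from the paper. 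The warped cone being connected rather than a coarse disjoint union of uniformly bounded finite pieces is not a presentational nuisance but the substantive difficulty, so ``granting such a factorisation'' concedes the theorem rather than proving it.

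There is also a concrete error earlier in your sketch, in step (a): the claim that $\mathfrak{G}\otimes p$ is locally compact, hence lies in $C^*(\mathcal{O})$, is false as stated. Under the identification $L_2(\mathcal{O},\nu)\cong L_2(M,m)\otimes L_2(1,\infty)$ the projection $\mathfrak{G}$ is $P_{\mathrm{const}}\otimes I$, where $P_{\mathrm{const}}$ is the rank-one projection onto constants in $L_2(M,m)$. For $f\in C_c(\mathcal{O})$ depending only on the cone variable, $f\mathfrak{G}=P_{\mathrm{const}}\otimes M_f$, and a multiplication operator $M_f$ on the atomless space $L_2(1,\infty)$ is compact only if $f=0$; tensoring with a rank-one projection $p$ does not help. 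So $f(\mathfrak{G}\otimes p)$ is not compact, $\mathfrak{G}\otimes p$ is not locally compact, and it does not obviously define a class in $K_0(C^*(\mathcal{O}))$ --- one must first discretize (e.g.\ pass to a coarse disjoint union of level sets $M\times\{t_n\}$ with finite nets, where the fibrewise averaging projections become genuinely locally compact) or otherwise modify the module, which is additional work your argument does not supply. Both the membership/nonvanishing of the class and its non-realizability under assembly therefore remain open in your write-up, consistent with the paper leaving the statement as a conjecture.
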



\begin{bibdiv}
\begin{biblist}[\normalsize]
\normalfont

\bib{akemann-walter}{article}{
   author={Akemann, C. A.},
   author={Walter, M. E.},
   title={Unbounded negative definite functions},
   journal={Canad. J. Math.},
   volume={33},
   date={1981},
   number={4},
   pages={862--871},
}

\bib{bfgm}{article}{
   author={Bader, U.},
   author={Furman, A.},
   author={Gelander, T.},
   author={Monod, N.},
   title={Property (T) and rigidity for actions on Banach spaces},
   journal={Acta Math.},
   volume={198},
   date={2007},
   number={1},
   pages={57--105},
}

\bib{bader-nowak}{article}{
   author={Bader, U.},
   author={Nowak, P. W.},
   title={Cohomology of deformations},
   journal={J. Topol. Anal.},
   volume={7},
   date={2015},
   number={1},
   pages={81--104},
}

\bib{bader-rosendal-sauer}{article}{
   author={Bader, U.},
   author={Rosendal, C.},
   author={Sauer, R.},
   title={On the cohomology of weakly almost periodic group representations},
   journal={J. Topol. Anal.},
   volume={6},
   date={2014},
   number={2},
   pages={153--165},
}
		
\bib{bekka-olivier}{article}{
   author={Bekka, B.},
   author={Olivier, B.},
   title={On groups with property $(T_{\ell_p})$},
   journal={J. Funct. Anal.},
   volume={267},
   date={2014},
   number={3},
   pages={643--659},
}		

\bib{bhv}{book}{
   author={Bekka, B.},
   author={de la Harpe, P.},
   author={Valette, A.},
   title={Kazhdan's property (T)},
   series={New Mathematical Monographs},
   volume={11},
   publisher={Cambridge University Press, Cambridge},
   date={2008},
   pages={xiv+472},
}
\bib{bentkusgotze}{article}{
    AUTHOR = {Bentkus, V.},
    AUTHOR = {G{\"o}tze, F.},
     TITLE = {Lattice point problems and distribution of values of quadratic
              forms},
   JOURNAL = {Ann. of Math. (2)},
  FJOURNAL = {Annals of Mathematics. Second Series},
    VOLUME = {150},
      date = {1999},
    NUMBER = {3},
     PAGES = {977--1027},
}
\bib{bl}{book}{
    AUTHOR = {Benyamini, Y.},
    author={Lindenstrauss, J.},
     TITLE = {Geometric nonlinear functional analysis. {V}ol. 1},
    SERIES = {American Mathematical Society Colloquium Publications},
    VOLUME = {48},
 PUBLISHER = {American Mathematical Society, Providence, RI},
      YEAR = {2000},
     PAGES = {xii+488},
}
\bib{bourgain-gamburd}{article}{
   author={Bourgain, J.},
   author={Gamburd, A.},
   title={On the spectral gap for finitely-generated subgroups of $\rm
   SU(2)$},
   journal={Invent. Math.},
   volume={171},
   date={2008},
   number={1},
   pages={83--121},
}

\bib{bourgain-gamburd-jems}{article}{
   author={Bourgain, J.},
   author={Gamburd, A.},
   TITLE = {A spectral gap theorem in {${\rm SU}(d)$}},
   JOURNAL = {J. Eur. Math. Soc. (JEMS)},
    VOLUME = {14},
    date = {2012},
    NUMBER = {5},
    PAGES = {1455--1511},
}
\bib{bourgain-varju}{article}{
    AUTHOR = {Bourgain, J.}
    AUTHOR = {Varj{\'u}, P.},
     TITLE = {Expansion in {$SL_d({\bf Z}/q{\bf Z}),\,q$} arbitrary},
   JOURNAL = {Invent. Math.},
  FJOURNAL = {Inventiones Mathematicae},
    VOLUME = {188},
      YEAR = {2012},
    NUMBER = {1},
     PAGES = {151--173},
}

\bib{bourdon-pajot}{article}{
   author={Bourdon, M.},
   author={Pajot, H.},
   title={Cohomologie $l_p$ et espaces de Besov},
   journal={J. Reine Angew. Math.},
   volume={558},
   date={2003},
   pages={85--108},
}
\bib{BridsonHaefliger}{book}{
   author =  {Bridson, M.},
   author = {Haefliger, A.},
   title =   {Metric spaces of non-positive curvature},
   publisher={Springer},
   date =    {1999},
}
\bib{brown}{article}{
   author={Brown, N. P.},
   title={Kazhdan's property T and $C^\ast$-algebras},
   journal={J. Funct. Anal.},
   volume={240},
   date={2006},
   number={1},
   pages={290--296},
}

\bib{burger}{article}{
    AUTHOR = {Burger, M.},
     TITLE = {Kazhdan constants for {${\rm SL}(3,{\bf Z})$}},
   JOURNAL = {J. Reine Angew. Math.},
  FJOURNAL = {Journal f\"ur die Reine und Angewandte Mathematik},
    VOLUME = {413},
      YEAR = {1991},
     PAGES = {36--67},
}

\bib{chatterji-drutu-haglund}{article}{
   author={Chatterji, I.},
   author={Dru{\c{t}}u, C.},
   author={Haglund, F.},
   title={Kazhdan and Haagerup properties from the median viewpoint},
   journal={Adv. Math.},
   volume={225},
   date={2010},
   number={2},
   pages={882--921},
}

\bib{chen-wang-1}{article}{
   author={Chen, X.},
   author={Wang, Q.},
   title={Ideal structure of uniform Roe algebras of coarse spaces},
   journal={J. Funct. Anal.},
   volume={216},
   date={2004},
   number={1},
   pages={191--211},
}

\bib{chen-wang-2}{article}{
   author={Chen, X.},
   author={Wang, Q.},
   title={Ghost ideals in uniform Roe algebras of coarse spaces},
   journal={Arch. Math. (Basel)},
   volume={84},
   date={2005},
   number={6},
   pages={519--526},
}
\bib{chong}{article}{
   author={Chong, F.},
   title={E-theory for $L^p$--algebras and the dual {N}ovikov conjecture},
   journal={PhD thesis, Vanderbilt University},
   date={2014},
}

\bib{connes}{book}{
   author={Connes, A.},
   title={Noncommutative geometry},
   publisher={Academic Press, Inc., San Diego, CA},
   date={1994},
   pages={xiv+661},
}

\bib{conze-guivarch}{article}{
   author={Conze, J.-P.},
   author={Guivarc'h, Y.},
   title={Ergodicity of group actions and spectral gap, applications to
   random walks and Markov shifts},
   journal={Discrete Contin. Dyn. Syst.},
   volume={33},
   date={2013},
   number={9},
   pages={4239--4269},
 }
\bib{DavLew}{article}{
    AUTHOR = {Davenport, H.},
    author={Lewis, D. J.},
     TITLE = {Gaps between values of positive definite quadratic forms},
   JOURNAL = {Acta Arith.},
  FJOURNAL = {Polska Akademia Nauk. Instytut Matematyczny. Acta Arithmetica},
    VOLUME = {22},
      date = {1972},
     PAGES = {87--105},
}

\bib{drutu-mackay}{article}{
   author={Dru{\c{t}}u, C.},
   author={Mackay, J.},
   title={Random groups, random graphs and eigenvalues of $p$-{L}aplacians},
   journal={preprint},
}

\bib{drutu-transf}{book}{
     author={Dru{\c{t}}u, C.},
     TITLE = {Transference principles and locally symmetric spaces},
 BOOKTITLE = {Dynamical systems and {D}iophantine approximation},
    SERIES = {S\'emin. Congr.},
    VOLUME = {19},
     PAGES = {49--68},
 PUBLISHER = {Soc. Math. France, Paris},
      date = {2009},
}
\bib{drutu-quadr}{article}{
    AUTHOR = {Dru{\c{t}}u, C.},
     TITLE = {Diophantine approximation on rational quadrics},
   JOURNAL = {Math. Ann.},
    VOLUME = {333},
      date = {2005},
     PAGES = {405--469},
}
\bib{fisher-margulis}{article}{
   author={Fisher, D.},
   author={Margulis, G.},
   title={Almost isometric actions, property (T), and local rigidity},
   journal={Invent. Math.},
   volume={162},
   date={2005},
   number={1},
   pages={19--80},
 }
 
\bib{gardella}{article}{
   author={Gardella, E.},
   author={Thiel, H.},
   title={Group algebras acting on $L^p$--spaces},
   journal={arXiv:1408.6136 [math.FA]},
}

\bib{gotze}{article}{
    AUTHOR = {G{\"o}tze, F.},
     TITLE = {Lattice point problems and values of quadratic forms},
   JOURNAL = {Invent. Math.},
  FJOURNAL = {Inventiones Mathematicae},
    VOLUME = {157},
      date = {2004},
    NUMBER = {1},
     PAGES = {195--226},
}
\bib{guivarch}{article}{
    AUTHOR = {Guivarc'h, Y.},
     TITLE = {Sur la loi des grands nombres et le rayon spectral d'une
              marche al\'eatoire},
 BOOKTITLE = {Conference on {R}andom {W}alks ({K}leebach, 1979)},
    SERIES = {Ast\'erisque},
    VOLUME = {74},
     PAGES = {47--98, 3},
 PUBLISHER = {Soc. Math. France, Paris},
      YEAR = {1980},
}

\bib{convexityLp}{article}{
    AUTHOR = {Hanner, O.},
     TITLE = {On the uniform convexity of {$L^p$} and {$l^p$} },
   JOURNAL = {Ark. Mat.},
  FJOURNAL = {Arkiv f\"or Matematik},
    NUMBER = {3},
      YEAR = {1956},
     PAGES = {239--244},
}

\bib{harpe-valette-ast}{book}{
    AUTHOR = {de la Harpe, Pierre and Valette, Alain},
     TITLE = {La propri\'et\'e {$(T)$} de {K}azhdan pour les groupes
              localement compacts (avec un appendice de {M}arc {B}urger)},
      NOTE = {With an appendix by M. Burger},
   JOURNAL = {Ast\'erisque},
  FJOURNAL = {Ast\'erisque},
    NUMBER = {175},
      YEAR = {1989},
     PAGES = {158},
}

\bib{harpe-robertson-valette}{article}{
   author={de la Harpe, P.},
   author={Robertson, A. G.},
   author={Valette, A.},
   title={On the spectrum of the sum of generators for a finitely generated
   group},
   journal={Israel J. Math.},
   volume={81},
   date={1993},
   number={1-2},
   pages={65--96},
}

\bib{higson}{article}{
	author={Higson, N.},
	title={Counterexamples to the coarse Baum-Connes conjecture},
	year={1999},
	}

\bib{higson-lafforgue-skandalis}{article}{
   author={Higson, N.},
   author={Lafforgue, V.},
   author={Skandalis, G.},
   title={Counterexamples to the Baum-Connes conjecture},
   journal={Geom. Funct. Anal.},
   volume={12},
   date={2002},
   number={2},
   pages={330--354},
}

\bib{higson-roe}{book}{
   author={Higson, N.},
   author={Roe, J.},
   title={Analytic $K$-homology},
   series={Oxford Mathematical Monographs},
   note={Oxford Science Publications},
   publisher={Oxford University Press, Oxford},
   date={2000},
   pages={xviii+405},
}

\bib{HooryLinialW}{article}{
   author={Hoory, Sh.},
   author={Linial, N.},
   author={Wigderson, W.},
   title={Expander graphs and their applications},
   JOURNAL = {Bull. Amer. Math. Soc. (N.S.)},
    VOLUME = {43},
    date = {2006},
    NUMBER = {4},
    PAGES = {439--561},
}

\bib{kasparov}{article}{
	author={Kasparov, G.},
	title={On the $L^p$ {N}ovikov and {B}aum-{C}onnes conjectures},
	journal={https://www.math.kyoto-u.ac.jp/~kida/conf/ask2013/Kasparov.pdf},
}
\bib{katok-book}{book}{
AUTHOR = {Katok, A.} 
AUTHOR = {Hasselblatt, B.},
     TITLE = {Introduction to the modern theory of dynamical systems},
    SERIES = {Encyclopedia of Mathematics and its Applications},
    VOLUME = {54},
 PUBLISHER = {Cambridge University Press, Cambridge},
      date = {1995},
     PAGES = {xviii+802},
}

\bib{kesten}{article}{
    AUTHOR = {Kesten, H.},
     TITLE = {Full {B}anach mean values on countable groups},
   JOURNAL = {Math. Scand.},
  FJOURNAL = {Mathematica Scandinavica},
    VOLUME = {7},
      YEAR = {1959},
     PAGES = {146--156},
}

\bib{kleinbock-margulis:log}{article}{
   author={Kleinbock, D.},
   author={Margulis G. A.},
   title={Logarithm laws for flows on homogeneous spaces},
   journal={Invent. math.},
   volume={138},
   date={1999},
   pages={451--494},
}

\bib{kochen-stone}{article}{
   author={Kochen, S.},
   author={Stone, C.},
   title={A note on the Borel Cantelli Lemma},
   journal={Ill. J. math.},
   volume={8},
   date={1964},
   pages={248--251},
}

\bib{delaat-delasalle}{article}{
	author={de Laat, T.},
	author={de la Salle, M.},
	title={Noncommutative-$L^p$-rigidity for high rank lattices and nonembeddability of expanders},
	journal={arXiv:1403.6415 [math.OA]},
}


\bib{lafforgue-duke}{article}{
   author={Lafforgue, V.},
   title={Un renforcement de la propri\'et\'e (T)},
   journal={Duke Math. J.},
   volume={143},
   date={2008},
   number={3},
   pages={559--602},
}

\bib{lafforgue-jta}{article}{
   author={Lafforgue, V.},
   title={Propri\'et\'e (T) renforc\'ee banachique et transformation de
   Fourier rapide},
   journal={J. Topol. Anal.},
   volume={1},
   date={2009},
   number={3},
   pages={191--206},
}

\bib{lafforgue-quanta}{article}{
   author={Lafforgue, V.},
   title={Propri\'et\'e (T) renforc\'ee et conjecture de Baum-Connes},
   conference={
      title={Quanta of maths},
   },
   book={
      series={Clay Math. Proc.},
      volume={11},
      publisher={Amer. Math. Soc., Providence, RI},
   },
   date={2010},
   pages={323--345},
}

\bib{liao}{article}{
   author={Liao, B.},
   title={Strong Banach property (T) for simple algebraic groups of higher
   rank},
   journal={J. Topol. Anal.},
   volume={6},
   date={2014},
   number={1},
   pages={75--105},
}

\bib{lubotzky-book}{book}{
   author={Lubotzky, A.},
   title={Discrete groups, expanding graphs and invariant measures},
   series={Progress in Mathematics},
   volume={125},
   publisher={Birkh\"auser Verlag, Basel},
   date={1994},
   pages={xii+195},
}

\bib{lubotzky-notices}{article}{
   author={Lubotzky, A.},
   title={What is$\dots$property $(\tau)$?},
   journal={Notices Amer. Math. Soc.},
   volume={52},
   date={2005},
   number={6},
   pages={626--627},
}

\bib{lubotzky-zuk}{article}{
   author={Lubotzky, A.},
   author={\.{Z}uk, A.},
   title={On property $(\tau )$},
   journal={available on the first author's website},
   date={2003},
}

\bib{maucourant}{article}{
    AUTHOR = {Maucourant, F.},
     TITLE = {Dynamical {B}orel-{C}antelli lemma for hyperbolic spaces},
   JOURNAL = {Israel J. Math.},
    VOLUME = {152},
      YEAR = {2006},
     PAGES = {143--155},
}

\bib{nevo}{article}{
   author={Nevo, A.},
   title={Pointwise ergodic theorems for actions of groups},
   conference={
      title={Handbook of dynamical systems. Vol. 1B},
   },
   book={
      publisher={Elsevier B. V., Amsterdam},
   },
   date={2006},
   pages={871--982},
}

\bib{nica}{article}{
   author={Nica, B.},
   title={Proper isometric actions of hyperbolic groups on $L^p$-spaces},
   journal={Compos. Math.},
   volume={149},
   date={2013},
   number={5},
   pages={773--792},
}
	
\bib{nowak-yu}{book}{
   author={Nowak, P. W.},
   author={Yu, G.},
   title={Large scale geometry},
   series={EMS Textbooks in Mathematics},
   publisher={European Mathematical Society (EMS), Z\"urich},
   date={2012},
   pages={xiv+189},
}	

\bib{nowak}{article}{
   author={Nowak, P. W.},
   title={Poincar\'e inequalities and rigidity for actions on Banach spaces},
   journal={J. Eur. Math. Soc. (JEMS)},
   volume={17},
   date={2015},
   number={3},
   pages={689--709},
}

\bib{nowak-survey}{article}{
   author={Nowak, P. W.},
   title={Group Actions on Banach Spaces},
   conference={
      title={Handbook of Group Actions},
   },
   book={
      series={Adv. Lect. Math. (ALM)},
      volume={32},
      publisher={Int. Press, Somerville, MA},
   },
   date={2015},
   pages={121--149},
}

\bib{oberwolfach}{article}{
	author={Mathematisches Forschungsinstitut Oberwolfach},
	title={Report no. 29/2001. Mini Workshop, Geometrization of Property $(T)$.}
}

\bib{oh-kazhdan}{article}{
    AUTHOR = {Oh, H.},
     TITLE = {Uniform pointwise bounds for matrix coefficients of unitary
              representations and applications to {K}azhdan constants},
   JOURNAL = {Duke Math. J.},
  FJOURNAL = {Duke Mathematical Journal},
    VOLUME = {113},
      YEAR = {2002},
    NUMBER = {1},
     PAGES = {133--192},
}

\bib{oseledets}{article}{
   author={Oseledets, V. I.},
   title={Markov chains, skew products and ergodic theorems for ``general''\
   dynamic systems},
   journal={Teor. Verojatnost. i Primenen.},
   volume={10},
   date={1965},
   pages={551--557},
}

\bib{phillips}{article}{
	author={Phillips, N. C.},
	title={Open problems related to operator algebras on $L^p$--spaces},
	journal={http://www.math.ksu.edu/events/conference/gpots2014/LpOpAlgQuestions.pdf},
}

\bib{puschnigg}{article}{
   author={Puschnigg, M.},
   title={The Baum-Connes conjecture with coefficients for word-hyperbolic groups (after Vincent Lafforgue)},
   journal={Bourbaki seminar 1062, October 2012. arXiv:1211.6009 [math.KT]},
   year={2012},
}

\bib{roe-foliated}{article}{
   author={Roe, J.},
   title={From foliations to coarse geometry and back},
   conference={
      title={Analysis and geometry in foliated manifolds (Santiago de
      Compostela, 1994)},
   },
   book={
      publisher={World Sci. Publ., River Edge, NJ},
   },
   date={1995},
   pages={195--205},
}
		
\bib{roe-cbms}{book}{
   author={Roe, J.},
   title={Index theory, coarse geometry, and topology of manifolds},
   series={CBMS Regional Conference Series in Mathematics},
   volume={90},
   publisher={Published for the Conference Board of the Mathematical
   Sciences, Washington, DC; by the American Mathematical Society,
   Providence, RI},
   date={1996},
   pages={x+100},
}
		
\bib{roe-lectures}{book}{
   author={Roe, J.},
   title={Lectures on coarse geometry},
   series={University Lecture Series},
   volume={31},
   publisher={American Mathematical Society, Providence, RI},
   date={2003},
   pages={viii+175},

}

\bib{roe-warped}{article}{
   author={Roe, J.},
   title={Warped cones and property A},
   journal={Geom. Topol.},
   volume={9},
   date={2005},
   pages={163--178 (electronic)},
}

\bib{roe-willett}{article}{
   author={Roe, J.},
   author={Willett, R.},
   title={Ghostbusting and property A},
   journal={J. Funct. Anal.},
   volume={266},
   date={2014},
   number={3},
   pages={1674--1684},
}

\bib{delasalle}{article}{
	author={de la Salle, M.},
	title={Towards Strong Banach property (T) for SL(3,R)},
	journal={Israel Journal of Mathematics, to appear},
	}
	
\bib{shalom-expander}{article}{
    AUTHOR = {Shalom, Y.},
     TITLE = {Expander graphs and amenable quotients},
 BOOKTITLE = {Emerging applications of number theory ({M}inneapolis, {MN},
              1996)},
    SERIES = {IMA Vol. Math. Appl.},
    VOLUME = {109},
     PAGES = {571--581},
 PUBLISHER = {Springer, New York},
      YEAR = {1999},
}

\bib{shalom-kazhdan}{article}{
    AUTHOR = {Shalom, Y.},
     TITLE = {Explicit {K}azhdan constants for representations of semisimple
              and arithmetic groups},
   JOURNAL = {Ann. Inst. Fourier (Grenoble)},
  FJOURNAL = {Universit\'e de Grenoble. Annales de l'Institut Fourier},
    VOLUME = {50},
      YEAR = {2000},
    NUMBER = {3},
     PAGES = {833--863},
}

\bib{shalom-tams}{article}{
    AUTHOR = {Shalom, Y.},
     TITLE = {Invariant measures for algebraic actions, {Z}ariski dense
              subgroups and {K}azhdan's property ({T})},
   JOURNAL = {Trans. Amer. Math. Soc.},
  FJOURNAL = {Transactions of the American Mathematical Society},
    VOLUME = {351},
      YEAR = {1999},
    NUMBER = {8},
     PAGES = {3387--3412},
}
		
\bib{sprindzuk}{book}{
   author={Sprindzuk, V.},
   title={Metric theory of Diophantine approximations},
   publisher={John Wiley \& Sons, New York, Toronto, London},
   date={1979},
}

\bib{sullivan:log}{article}{
   author={Sullivan, D.},
   title={Disjoint spheres, approximation by
imaginary quadratic numbers, and the logarithm law for
geodesics},
   journal={Acta Math.},
   volume={149},
   date={1982},
   pages={215--237},
}

\bib{valette}{article}{
   author={Valette, A.},
   title={Minimal projections, integrable representations and property
   $({\rm T})$},
   journal={Arch. Math. (Basel)},
   volume={43},
   date={1984},
   number={5},
   pages={397--406},
}
\bib{wellswilliams}{book}{
    AUTHOR = {Wells, J. H.},
   author={Williams, L. R.},
     TITLE = {Embeddings and extensions in analysis},
      NOTE = {Ergebnisse der Mathematik und ihrer Grenzgebiete, Band 84},
 PUBLISHER = {Springer-Verlag, New York-Heidelberg},
      YEAR = {1975},
     PAGES = {vii+108},
}

\bib{willett-yu}{article}{
   author={Willett, R.},
   author={Yu, G.},
   title={Higher index theory for certain expanders and Gromov monster
   groups, I},
   journal={Adv. Math.},
   volume={229},
   date={2012},
   number={3},
   pages={1380--1416},
}
\bib{yu}{article}{
   author={Yu, G.},
   title={Hyperbolic groups admit proper affine isometric actions on $l^p$-spaces},
   journal={Geom. Funct. Anal.},
   volume={15},
   date={2005},
   number={5},
   pages={1144--1151},
}

\bib{zimmer-book}{book}{
   author={Zimmer, R. J.},
   title={Ergodic theory and semisimple groups},
    series = {Monographs in Mathematics},
    volume = {81},
 publisher = {Birkh\"auser Verlag, Basel},
      date = {1984},
     pages = {x+209},}
\end{biblist}
\end{bibdiv}

\end{document}